\crefname{Thm}{Theorem}{Theorems}
\crefname{Rem}{Remark}{Remarks}
\crefname{Prop}{Proposition}{Propositions}
\numberwithin{equation}{section}
\newtheorem{Cor}[equation]{Corollary}
\newtheorem{Lem}[equation]{Lemma}
\newtheorem{Prop}[equation]{Proposition}
\newtheorem{Thm}[equation]{Theorem}
\theoremstyle{remark}
\newtheorem{Def}[equation]{Definition}
\newtheorem{Not}[equation]{Notation}
\newtheorem{Exa}[equation]{Example}
\newtheorem{Rem}[equation]{Remark}
\newtheorem{Sch}[equation]{Scholium}
\newtheorem{Rec}[equation]{Recollection}
\newtheorem*{Ack}{Acknowledgements}
\newcommand{\nc}{\newcommand}
\nc{\dmo}{\DeclareMathOperator}
\dmo{\Chain}{Ch}
\dmo{\chara}{char}%
\dmo{\cone}{cone}
\dmo{\Der}{D}
\dmo{\Gl}{Gl}
\dmo{\Hom}{Hom}
\dmo{\id}{id}
\dmo{\Img}{Im}
\dmo{\im}{im}
\dmo{\incl}{incl}
\dmo{\Ind}{Ind}
\dmo{\Infl}{Infl}
\dmo{\Ker}{Ker}
\dmo{\Komp}{K}
\dmo{\Kos}{Kos}
\dmo{\modname}{\mathsf{mod}}%
\dmo{\perm}{\mathsf{perm}}
\dmo{\proj}{proj}
\dmo{\Res}{Res}
\dmo{\rmG}{G}
\dmo{\rmH}{H}
\dmo{\rmK}{K}
\dmo{\sgn}{sgn}
\dmo{\smallb}{b}
\dmo{\smallperf}{perf}
\dmo{\Spc}{Spc}
\dmo{\Spech}{Spec^h}
\dmo{\thick}{thick}
\nc{\AcatGk}{\Acat{G}{\kk}}
\nc{\AcatGR}{\Acat{G}{\CR}}
\nc{\Acat}[2]{\Aname(#1;#2)}
\nc{\adhoc}{{\sl ad hoc}\xspace}
\nc{\adj}{\dashv}
\nc{\AGk}{\KA{G}{\kk}}
\nc{\aka}{{a.\,k.\,a.}\ }
\nc{\Aname}{\cat{P}}
\nc{\BcatGk}{\Bcat{G}{\kk}}
\nc{\BcatGR}{\Bcat{G}{\CR}}
\nc{\Bcat}[2]{\Bname(#1;#2)}
\nc{\BGk}{\KB{G}{\kk}}
\nc{\Bname}{\cat{Q}}
\nc{\cat}[1]{\mathscr{#1}}
\nc{\cA}{\cat{A}}
\nc{\Cb}{\Chain_{\smallb}}
\nc{\cf}{{\sl cf.}\ }
\nc{\CRG}{\CR G}
\nc{\CR}{R}
\nc{\cT}{\cat{T}}
\nc{\Db}{\Der_{\smallb}}
\nc{\Dperf}{\Der_{\smallperf}}
\nc{\DRperf}[1][]{\Der_{\kern-0.1em\ifblank{#1}{\CR}{#1}\text{-}\kern-0.1em\smallperf}}
\nc{\eg}{{\sl e.g.}\@\xspace}
\nc{\eps}{\epsilon}
\nc{\Homcat}[1]{\Hom_{\cat #1}}
\nc{\hook}{\hookrightarrow}
\nc{\ie}{{\sl i.e.}\@\xspace}
\nc{\inv}{^{-1}}
\nc{\isoto}{\overset{\sim}{\,\to\,}}
\nc{\KA}[2]{\rmK_0^{\Aname}(#1;#2)}
\nc{\Kbac}{\Komp_{\smallb,\mathrm{ac}}}
\nc{\KB}[2]{\rmK_0^{\Bname}(#1;#2)}
\nc{\Kb}{\Komp_{\smallb}}
\nc{\KG}[2]{\rmG_0(#2#1)}
\nc{\kkG}{\kk G}
\nc{\kk}{k}
\nc{\KP}[2]{\rmK_0(#2#1)}
\nc{\Lotimes}{\otimes^{\rmL}}
\nc{\lto}{\leftarrow}
\nc{\Mid}{\,\big|\,}
\nc{\mmod}[1]{\modname(#1)}%
\nc{\normal}{\vartriangleleft}
\nc{\onto}{\mathop{\twoheadrightarrow}}
\nc{\PGk}{\KP{G}{\kk}}
\nc{\potimes}[1]{^{\otimes #1}}
\nc{\ppermutation}{$\natural$-permutation\xspace}
\nc{\pperm}{p\textrm{-}\!\perm}
\nc{\qquadtext}[1]{\qquad\textrm{#1}\qquad}
\nc{\quadtext}[1]{\quad\textrm{#1}\quad}
\nc{\RGk}{\KG{G}{\kk}}
\nc{\rmL}{\mathsf{L}}
\nc{\sbull}{{\scriptscriptstyle\bullet}}
\nc{\SET}[2]{\big\{\,#1\Mid#2\,\big\}}
\nc{\smat}[1]{\left(\begin{smallmatrix} #1 \end{smallmatrix}\right)}
\nc{\sminus}{\smallsetminus}
\nc{\too}{\mathop{\longrightarrow}\limits}
\nc{\tors}{\text{-}\mathrm{tors}}
\nc{\To}{\Rightarrow}
\nc{\xfrom}[1]{\xleftarrow{#1}}
\nc{\xto}[1]{\xrightarrow{#1}}
\let\ea\expandafter
\def\foreachLetter#1#2#3{\foreachcount=#1
  \ea\loop\ea\ea\ea#3\@Alph\foreachcount
  \advance\foreachcount by 1
  \ifnum\foreachcount<#2\repeat}
\def\definebb#1{\ea\gdef\csname bb#1\endcsname{\ensuremath{\mathbb{#1}}\xspace}}
\date{\today}
\author{Paul Balmer}
\address{Paul Balmer, UCLA Mathematics Department, Los Angeles, CA 90095, USA}
\email{balmer@math.ucla.edu}
\urladdr{https://www.math.ucla.edu/~balmer}
\author{Martin Gallauer}
\address{Martin Gallauer, Max-Planck-Institut f\"ur Mathematik, 53111 Bonn, Germany}
\email{gallauer@mpim-bonn.mpg.de}
\urladdr{https://guests.mpim-bonn.mpg.de/gallauer}
\nc{\newton}{The authors would like to thank the Isaac Newton Institute for Mathematical Sciences for support and hospitality during the programme \textit{K-theory, algebraic cycles and motivic homotopy theory} when work on this paper was undertaken. This programme was supported by EPSRC grant number EP/R014604/1.}
\begin{document}


\title{Finite permutation resolutions}

\begin{abstract}
We prove that every finite dimensional representation of a finite group over a field of characteristic~$p$ admits a finite resolution by $p$-permutation modules.
The proof involves a reformulation in terms of derived categories.
\end{abstract}

\subjclass[2010]{}
\keywords{Modular representation theory, permutation modules, trivial source modules, derived categories, dense triangulated subcategories, Grothendieck group}

\thanks{First-named author supported by NSF grant~DMS-1901696. Second-named author supported by a Titchmarsh Fellowship of the University of Oxford.
\newton{}}

\maketitle

\vskip-\baselineskip\vskip-\baselineskip
\tableofcontents
\vskip-\baselineskip\vskip-\baselineskip\vskip-\baselineskip

\section{Introduction}
\label{sec:intro}%


\emph{Throughout this paper, $G$ is a finite group and $\kk$ is a field of characteristic~$p>0$, typically dividing the order of~$G$. All modules are assumed finitely generated.}

\medbreak

Permutation modules are those obtained by linearizing finite $G$-sets, see \Cref{Rec:permutation}.
Letting the group vary, the class of permutation modules is also the smallest one that contains free modules and that is closed under induction and restriction along arbitrary homomorphisms.
Any permutation $\kkG$-module is isomorphic to $\kk(G/H_1)\oplus\cdots\oplus\kk(G/H_r)$ for some subgroups~$H_1,\ldots,H_r$ of~$G$, of which there are of course finitely many.
Direct summands of permutation $\kkG$-modules are called \emph{$p$-permutation} or \emph{trivial source modules}.
Despite their apparent simplicity, permutation modules play an important role in many areas of group representation theory, as recalled for instance in the introduction of Benson-Carlson~\cite{benson-carlson:cplx-permutations}. Our own interest stems from the theory of Artin motives, see Voevodsky~\cite[\S\,3.4]{Voevodsky00}.

Recall that projective resolutions of non-projective $\kkG$-modules are necessarily unbounded. Our \Cref{Thm:kG-pperm-resolutions} shows that $p$-permutation modules are significantly more flexible than projectives, in that they allow finite resolutions of all modules.

\begin{Thm}
\label{Thm:resol-intro}%
Every $\kkG$-module~$M$ admits a finite resolution by $p$-permuta\-tion $\kkG$-modules. In particular, for $G$ a $p$-group, every module admits a finite resolution by permutation modules.
\end{Thm}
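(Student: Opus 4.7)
My plan is to reformulate the theorem in the bounded derived category $\Db(\mmod{\kkG})$ and prove that the thick triangulated subcategory $\cat{T}_G$ generated by the $p$-permutation modules equals the whole category. Modulo a careful derived-categorical translation, a module lies in $\cat{T}_G$ if and only if it admits a finite $p$-permutation resolution, so the reformulation is faithful to the original statement.

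The proof splits into two steps. First, reduce to the case $G = P$ a $p$-group. For a Sylow $p$-subgroup $P \leq G$, the index $[G:P]$ is invertible in $\kk$, so by Higman's relative projectivity criterion every $\kkG$-module $M$ is a direct summand of $\Ind_P^G \Res_P^G M$. Induction is exact and takes $\kk[P/H]$ to $\kk[G/H]$, so it sends permutation modules to permutation modules (hence $p$-perm to $p$-perm) and carries $\cat{T}_P$ into $\cat{T}_G$. Assuming the theorem for the $p$-group $P$, we get $\Res_P^G M \in \cat{T}_P$, so $\Ind_P^G \Res_P^G M \in \cat{T}_G$, and $M \in \cat{T}_G$ by thickness (closure under direct summands).

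Second, handle the $p$-group case. The only simple $\kk P$-module is $\kk = \kk[P/P]$, itself a permutation module and hence in $\cat{T}_P$. Every finitely generated $\kk P$-module $M$ admits a finite composition series $0 = M_0 \subset M_1 \subset \cdots \subset M_n = M$ whose subquotients are all isomorphic to $\kk$, yielding short exact sequences $0 \to M_{i-1} \to M_i \to \kk \to 0$. Each gives a distinguished triangle in $\Db(\mmod{\kk P})$, and iterated application of the 2-of-3 closure of the triangulated subcategory $\cat{T}_P$ places $M = M_n$ in $\cat{T}_P$.

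The main obstacle I anticipate is the passage from the derived-categorical conclusion $M \in \cat{T}_G$ to an honest finite one-sided $p$-perm resolution of $M$. A bounded complex of $p$-perm modules quasi-isomorphic to $M$ need not be concentrated in non-positive degrees, and naive canonical truncation introduces a kernel term that is generally not $p$-perm. As suggested by the paper's keywords on dense triangulated subcategories and Grothendieck groups, I expect this to be addressed via Thomason's classification: establish density of $\cat{T}_G$ in $\Db(\mmod{\kkG})$ using the Sylow summand argument and verify that $K_0(\cat{T}_G) \to K_0(\Db(\mmod{\kkG}))$ is surjective (a Grothendieck-group computation), concluding $\cat{T}_G = \Db(\mmod{\kkG})$, after which the existence of one-sided $p$-perm resolutions can be extracted by standard homological algebra.
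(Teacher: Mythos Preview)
Your steps 1--2 correctly prove that the \emph{thick} subcategory $\cat{T}_G$ generated by $p$-permutation modules is all of $\Db(\kkG)$; this is essentially the paper's Corollary~4.3 (field case), reached by the same Sylow reduction and the observation that $\kk$ generates $\Db(\kk P)$ for a $p$-group~$P$. The genuine gap is exactly where you flag it, and your proposed fix does not close it.

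The problem is that Thomason's theorem classifies \emph{dense} triangulated subcategories via subgroups of $\rmK_0$. Your $\cat{T}_G$ is already thick, so once you know it is dense it equals $\Db(\kkG)$ and Thomason gives you nothing further; there is no ``$\rmK_0$ surjectivity'' step left to perform, and no mechanism to produce one-sided resolutions. What the paper does---and what your proposal is missing---is to introduce a \emph{different}, generally non-thick, triangulated subcategory $\BcatGk\subseteq\Db(\kkG)$ defined so that membership of a module $M$ in $\BcatGk$ \emph{by construction} yields an honest resolution. Concretely, $\BcatGk$ consists of complexes admitting, for every $m\ge 0$, a quasi-isomorphism from a bounded complex of $p$-permutation modules that is \emph{projective in all degrees~$\le m$}. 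The ``projective in low degrees'' condition is the crux: it lets one split off and discard the negative-degree tail (Lemma~2.3, Corollary~2.4), turning a two-sided quasi-isomorphism into a genuine resolution; and the quantification over all $m$ is what makes $\BcatGk$ closed under cones (Proposition~2.8), since one needs enough projectives to lift maps along quasi-isomorphisms. Neither of these is ``standard homological algebra''; the naive truncation you worry about really does fail.

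With $\BcatGk$ in hand, the paper shows $\BcatGk^\natural=\Db(\kkG)$ (your steps 1--2, essentially), so $\BcatGk$ is dense, and \emph{then} applies Thomason: one must show $\rmK_0(\BcatGk)=\rmG_0(\kkG)$. This is a genuine computation, carried out via Brauer's induction theorem in the modular case (Proposition~5.6, Corollary~5.9), not a formality. Only then does Thomason give $\BcatGk=\Db(\kkG)$, whence every module has a resolution. Your outline lacks both the definition of the intermediate category and the Brauer-induction step.
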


It is surprising that this result could be new, in such a mature part of mathematics. A possible explanation is that it runs contrary to intuition. Bouc-Stancu-Webb~\cite{bouc-stancu-webb:projdim-mackey} show that if bounded $p$-permutation resolutions exist and moreover remain acyclic on $H$-fixed points for all subgroups~$H$ of~$G$ then the $p$-Sylow subgroups of~$G$ are very special: either cyclic or dihedral.
In broader terms, modular representation theory is well-known to be \emph{wild} for most groups, whereas permutation modules, with their finitely many isomorphism types of indecomposables, may not seem `wild enough' to control all $\kkG$-modules. In any case, conventional wisdom was that such a result would probably not hold for all finite groups.

Things changed with the partial result of~\cite{balmer-benson:permutation-resolutions}, a weaker form of \Cref{Thm:resol-intro} `up to direct summands', resolving $M\oplus N$ instead of~$M$ for some \adhoc complement~$N$. No control on~$N$ was given in~\cite{balmer-benson:permutation-resolutions}. Our first proof of \Cref{Thm:resol-intro} followed \cite{balmer-benson:permutation-resolutions} and was based on a reduction to elementary abelian groups via Carlson~\cite{carlson:induction-abelem}; the latter in turn relies on Serre's theorem on products of Bockstein~\cite{serre:bockstein}. We present here a simpler proof, which is more self-contained. We do not need to reduce to elementary abelian groups and do not invoke~\cite{serre:bockstein} at all.

\Cref{Thm:resol-intro} is an existence result, not a recipe to construct $p$-permutation resolutions. The proof does give a method to find them but it is convoluted. We leave the problem of finding effective constructions to interested readers.

\medbreak

The overarching theme we explore in this paper and the sequel~\cite{balmer-gallauer:resol-big} is how representations are controlled by permutation ones, even with more general coefficients. So \emph{let $\CR$ be a commutative noetherian ring}.
Consider the inclusion of the additive category of permutation $\CRG$-modules (\Cref{Not:permutation})
\[
\perm(G;\CR)\subseteq\mmod{\CRG}
\]
inside the abelian category of finitely generated $\CRG$-modules. This inclusion induces a canonical functor $\Upsilon\colon\Kb(\perm(G;\CR))\to \Db(\CRG)=\Db(\mmod{\CRG})$ from the bounded homotopy category of the former to the bounded derived category of the latter. The kernel of~$\Upsilon$ is the thick subcategory $\Kbac(\perm(G;\CR))$ of \emph{acyclic} complexes of permutation modules, studied in~\cite{benson-carlson:cplx-permutations}. The functor $\Upsilon$ descends to the corresponding Verdier quotient and, after idempotent-completion $(-)^\natural$, yields
\begin{equation}
\label{eq:Kb->Db}%
\bar{\Upsilon} \colon\left(\frac{\Kb(\perm(G;\CR))}{\Kbac(\perm(G;\CR))}\right)^\natural\too\Db(\CRG).
\end{equation}
This canonical functor~$\bar{\Upsilon}$ is our central object of study -- hence the eye-catching notation~$\Upsilon$.
The only formal property that $\bar{\Upsilon}$ inherits by construction is being conservative. So the first surprise is:
\begin{Thm}
\label{Thm:ff-intro}%
The canonical functor $\bar{\Upsilon}$ of~\eqref{eq:Kb->Db} is always fully faithful.
\end{Thm}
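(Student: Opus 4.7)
My plan is to reduce both fullness and faithfulness of $\bar{\Upsilon}$ to the following \emph{resolution lemma}: every bounded complex $X$ of $\CRG$-modules that is quasi-isomorphic in $\Db(\CRG)$ to a bounded complex of permutation modules admits a chain-map quasi-isomorphism $X'\to X$ from some $X'\in\Cb(\perm(G;\CR))$. Since $\Db(\CRG)$ is idempotent complete, it suffices to show fully faithfulness of $\Kb(\perm(G;\CR))/\Kbac(\perm(G;\CR))\to\Db(\CRG)$ before idempotent completion: the passage to $(-)^{\natural}$ then preserves Hom sets automatically.

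Granting the lemma, both fullness and faithfulness follow from the calculus of fractions in the Verdier quotient. For fullness, a morphism $\varphi\colon P\to Q$ in $\Db(\CRG)$ between objects $P,Q\in\Cb(\perm(G;\CR))$ is represented by a roof $P\xleftarrow{s}X\xrightarrow{f}Q$ with $s$ a quasi-isomorphism; applying the lemma to $X$ (which is quasi-isomorphic to $P$) produces $u\colon X'\to X$ with $X'\in\Cb(\perm(G;\CR))$, and then $P\xleftarrow{s\circ u}X'\xrightarrow{f\circ u}Q$ is a roof whose outer legs are chain maps between complexes of permutation modules, hence automatic morphisms in $\Kb(\perm(G;\CR))$; this roof represents $\varphi$ in the Verdier quotient. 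For faithfulness, if a chain map $f\colon P\to Q$ in $\Kb(\perm(G;\CR))$ vanishes in $\Db(\CRG)$, the calculus of fractions yields a quasi-isomorphism $s\colon Y\to P$ in $\Kb(\mmod{\CRG})$ with $f\circ s$ null-homotopic. The lemma applied to $Y$ provides $u\colon Y'\to Y$ with $Y'\in\Cb(\perm(G;\CR))$; then $s\circ u\colon Y'\to P$ is a quasi-isomorphism between permutation complexes whose cone is a bounded acyclic permutation complex, hence lies in $\Kbac(\perm(G;\CR))$ and becomes invertible in the Verdier quotient. The equation $f\circ(s\circ u)=0$ in $\Kb(\perm(G;\CR))$ then forces $f=0$ in the Verdier quotient.

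The main obstacle is the resolution lemma itself. When $\CR=\kk$ is a field, it is an easy consequence of \Cref{Thm:resol-intro}: the latter produces bounded $p$-permutation resolutions of every module, which extend to bounded complexes by standard truncation and mapping-cone arguments, and $p$-permutation resolutions become permutation resolutions after idempotent completion. For a general noetherian ring $\CR$, however, one cannot expect every module to admit such a resolution, so the restrictive hypothesis on $X$ -- that it is quasi-isomorphic to a bounded permutation complex -- must be used essentially. The natural strategy is an induction on the length of a triangulated decomposition of $X$ in the thick subcategory of $\Db(\CRG)$ generated by the permutation modules, building the resolution of a cone inductively from resolutions of the outer terms. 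The crux is to genuinely lift the connecting morphism of such a triangle to a chain map between permutation complexes; this is precisely the point where the partial result of~\cite{balmer-benson:permutation-resolutions} had to settle for a resolution of $X\oplus N$ rather than $X$, and overcoming this obstruction will require the full flexibility of the permutation category.
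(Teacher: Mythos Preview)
Your reduction of full faithfulness to the resolution lemma is sound, and the calculus-of-fractions argument you give (lifting roofs and zero-witnesses to the permutation level) is exactly how the paper proceeds once such a lemma is available. The gap is precisely where you place it: you do not prove the resolution lemma. Your inductive strategy on triangulated length founders at the lifting step, as you yourself acknowledge, because permutation modules are not projective and there is no mechanism for lifting a chain map $P'\to X$ along a quasi-isomorphism $Y\to X$ when $P'$ is merely a complex of permutation modules.

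The paper's key idea, which your proposal is missing, is to demand more of the resolution: an \emph{$m$-free} permutation resolution is one in which the terms in degrees $\le m$ are \emph{free} $\CRG$-modules, not just permutation. Freeness in low degrees is exactly what makes the lifting go through (\Cref{Lem:lift}), and one then shows that the class $\AcatGR$ of complexes admitting $m$-free permutation resolutions for \emph{all} $m\ge 0$ is both triangulated and invariant under quasi-isomorphism (\Cref{Prop:perm-resol}, \Cref{Prop:A(G)-B(G)}). This is what allows roofs to be lifted. The remaining, substantial, step is to show that every bounded complex of permutation modules lies in~$\AcatGR^\natural$; this reduces to $\CR\in\AcatGR^\natural$ and is the content of \Cref{sec:1-resol}, via an explicit Koszul-type resolution of~$\CR$ for $p$-groups (with nontrivial sign bookkeeping when $p=2$). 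Since a given permutation complex $P$ may lie only in $\AcatGR^\natural$ and not in $\AcatGR$, one first replaces $P$ by $P\oplus\Sigma P\in\AcatGR$ before running the roof argument. Finally, invoking \Cref{Thm:resol-intro} for the field case does not short-circuit any of this: its proof already rests on the $m$-free machinery and on the constructions of \Cref{sec:1-resol}, so you would be presupposing the very ingredients you need.
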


See \Cref{Thm:Kperm->A(G)}. This result employs a notion of `good' ($p$-)permutation resolution of complexes, first introduced for modules in~\cite{balmer-benson:permutation-resolutions}.
They are resolutions admitting sufficiently many projectives in low homological degrees.
We discuss this more precisely in \Cref{sec:A(G)}.
An important property of `good' resolutions, as opposed to na\"{i}ve resolutions, is that the class of complexes which admit such `good' resolutions forms a \emph{triangulated} subcategory of $\Db(\CRG)$, denoted here
\[
\BcatGR.
\]

A key fact is that the trivial $\CRG$-module $\CR$ belongs to this subcategory $\BcatGR$ when $G$ is a $p$-group.
This occupies \Cref{sec:1-resol}.
For odd primes~$p$ we describe the required `good' resolution of~$\CR$ as an explicit Koszul complex.
However, for~$p=2$ (and when $2\neq 0$ in~$\CR$), the terms in the Koszul complex need not be actual permutation modules because \emph{signs} come in the way.
We solve that issue via an induction on the order of the $2$-group and a few tricks of technical nature.

In \Cref{sec:im(F)}, we complete the proof of \Cref{Thm:ff-intro} and identify the essential image of~$\bar{\Upsilon}$ as the idempotent-completion of the aforementioned subcategory~$\BcatGR$ of~$\Db(\CRG)$. This is \Cref{Thm:Kperm->A(G)}, which holds for any~$\CR$, not just for fields.

Of course, \eqref{eq:Kb->Db} cannot be essentially surjective in general, even for $G$ trivial, unless $\CR$ is \emph{regular}. Indeed, for $G=1$ the functor~$\bar{\Upsilon}$ boils down to the canonical inclusion $\Dperf(\CR)\hook \Db(\CR)$.
Regularity of~$\CR$ turns out to be the only obstruction. We recover in this way an unpublished result of Rouquier~\cite[\S\,2.4]{beilinson-vologodsky:dg-voevodsky-motives}\,(\footnote{\,See also~\url{https://www.math.ucla.edu/~rouquier/papers/perm.pdf}}):
\begin{Thm}[\Cref{Sch:regular}]
\label{Thm:regular-intro}%
Let $\CR$ be a regular noetherian commutative ring. The canonical functor~$\bar{\Upsilon}$ of~\eqref{eq:Kb->Db} is a triangulated equivalence
\[
\left(\frac{\Kb(\perm(G;\CR))}{\Kbac(\perm(G;\CR))}\right)^\natural\xto{\ \sim\ }\Db(\CRG).
\]
\end{Thm}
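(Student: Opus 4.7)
By \Cref{Thm:ff-intro}, the functor $\bar{\Upsilon}$ is already known to be fully faithful with essential image the idempotent-completed thick subcategory $\BcatGR^\natural \subseteq \Db(\CRG)$. The assertion therefore reduces to essential surjectivity, that is, to the equality $\BcatGR^\natural = \Db(\CRG)$. Since $\BcatGR^\natural$ is thick and $\Db(\CRG)$ is generated as a thick subcategory by the heart $\mmod{\CRG}$ of its standard $t$-structure, it is enough to show that every finitely generated $\CRG$-module $M$ lies in $\BcatGR^\natural$.

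The first step exploits the regularity of $\CR$. Let $d$ denote its (finite) global dimension, and pick a resolution $\cdots \to F_1 \to F_0 \to M \to 0$ by finitely generated free $\CRG$-modules. Because $\CRG$ is free of rank $|G|$ as an $\CR$-module, this is simultaneously an exact sequence of $\CR$-modules with each $F_i$ being $\CR$-projective; hence the $d$-th syzygy $K$ of $M$ is itself $\CR$-projective. This yields a finite exact sequence
\[
0 \to K \to F_{d-1} \to \cdots \to F_0 \to M \to 0
\]
in which the $F_i$ are permutation $\CRG$-modules, and so lie in $\BcatGR$. The problem therefore reduces to showing that every $\CR$-projective $\CRG$-module $N$ belongs to $\BcatGR^\natural$.

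This last step is the crux of the proof. The evaluation map $\CRG \otimes_\CR N \twoheadrightarrow N$ is $\CR$-split, and its domain is a $p$-permutation $\CRG$-module (being a direct summand of $(\CRG)^n$ once $N$ is realized as an $\CR$-summand of $\CR^n$) with again $\CR$-projective kernel. Iteration yields an \emph{a priori} infinite resolution of $N$ by $p$-permutation modules whose syzygies remain $\CR$-projective, and the main obstacle is to show this resolution may be chosen to have finite length. The plan is to apply \Cref{Thm:resol-intro} to the reductions $N \otimes_\CR k(\mathfrak{p})$ at each residue field of $\CR$---yielding finite $p$-permutation resolutions over $k(\mathfrak{p})[G]$---then to lift these locally over the regular local rings $\CR_\mathfrak{p}$ via Nakayama's lemma and the $\CR$-flatness of permutation $\CR[G/H]$-modules, and finally to globalize the local liftings into a single finite $p$-permutation resolution of $N$ over $\CR$. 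The globalization step is the technically demanding part of the argument.
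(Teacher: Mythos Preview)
Your reduction in the first two paragraphs is sound, modulo the caveat that the paper defines regular as \emph{locally} of finite projective dimension, so a uniform global bound~$d$ need not exist (Nagata's examples); this could presumably be repaired by a further localization. The genuine gap is the third paragraph, which is a plan rather than a proof. Lifting a $p$-permutation resolution of $N\otimes_\CR k(\mathfrak{p})$ to one of $N_{\mathfrak{p}}$ over $\CR_\mathfrak{p}$ is not a matter of Nakayama alone: you must lift $G$-equivariant maps (not merely $\CR$-module surjections), arrange that the lifted maps compose to zero, and verify that the lifted complex remains acyclic---none of which you address. The globalization step you yourself flag as ``technically demanding'' is simply absent, and you give no indication of what mechanism (descent? patching of bounded complexes of \ppermutation modules?) would accomplish it. Finally, your route makes \Cref{Thm:regular-intro} depend on \Cref{Thm:resol-intro}, hence on the Brauer-induction argument of Section~\ref{sec:dense-and-K_0}.

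The paper's proof in \Cref{Sch:regular} avoids all of this and is logically prior to \Cref{Thm:resol-intro}. After reducing to $G$ a $p$-group via \Cref{Cor:Acat-Sylow}, it invokes \Cref{Cor:inverting-p} (regularity enters only to guarantee $\CR$-perfectness) to split any $X$, up to a suspension, into a bounded complex of permutation modules and a $p$-torsion piece~$T$. For $p$-torsion objects one passes to $\bar{\CR}=\CR/p$, where nilpotence of the augmentation ideal of $\bar{\CR}G$ forces $\Db(\bar{\CR}G)$ to be thickly generated by modules with trivial $G$-action. A second use of regularity gives $\Db(\CR)=\Dperf(\CR)=\thick(\CR)$, and then $\CR\in\AcatGR^\natural$ (\Cref{Cor:R-Acat}) finishes. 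No residue-field lifting, no gluing, and no forward reference to \Cref{Thm:resol-intro}.
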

This theorem will also be an easy consequence of the companion paper~\cite{balmer-gallauer:resol-big}, where we construct on~$\Db(\CRG)$ an invariant that completely characterizes the complexes in the essential image of $\bar{\Upsilon}$, for possibly singular rings~$\CR$.

\medbreak

We now come full circle and return to coefficients $\CR=\kk$ in a field~$\kk$ of characteristic~$p>0$.
Write $\pperm(G;\kk):=\perm(G;\kk)^\natural$ for the category of $p$-permutation modules.
In that situation, we have:
\begin{Thm}[\Cref{Thm:kG-pperm-resolutions}]
\label{Thm:Kperm(kG)-intro}%
The canonical functor is an equivalence
\[
\frac{\Kb(\pperm(G;\kk))}{\Kbac(\pperm(G;\kk))} \xto{\ \sim\ }\Db(\kkG).
\]
\end{Thm}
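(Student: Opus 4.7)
The plan is to deduce the theorem as a formal consequence of \Cref{Thm:regular-intro} applied to the regular ring $\kk$, via the formalism of idempotent completions of triangulated categories.

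Since $\kk$ is regular noetherian, \Cref{Thm:regular-intro} yields the triangulated equivalence
\[
\bar\Upsilon\colon\left(\frac{\Kb(\perm(G;\kk))}{\Kbac(\perm(G;\kk))}\right)^\natural \xto{\,\sim\,}\Db(\kkG).
\]
Since $\pperm(G;\kk)=\perm(G;\kk)^\natural$ by definition, the Balmer-Schlichting theorem on idempotent completion of bounded homotopy categories identifies $\Kb(\pperm(G;\kk))\simeq\Kb(\perm(G;\kk))^\natural$ as triangulated categories. I then invoke the general formula $(\cat{T}/\cat{S})^\natural\simeq\cat{T}^\natural/\cat{S}^+$, where $\cat{S}^+$ denotes the thick subcategory of $\cat{T}^\natural$ generated by $\cat{S}$, specialized to $\cat{T}=\Kb(\perm(G;\kk))$ and $\cat{S}=\Kbac(\perm(G;\kk))$. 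Combining these yields a triangulated equivalence $\Kb(\pperm(G;\kk))/\cat{S}^+\xto{\,\sim\,}\Db(\kkG)$, realized by the canonical functor induced from $\pperm(G;\kk)\hook\mmod{\kkG}$.

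It remains to identify $\cat{S}^+$ with $\Kbac(\pperm(G;\kk))$. The inclusion $\cat{S}^+\subseteq\Kbac(\pperm(G;\kk))$ is immediate. Conversely, conservativity of the equivalence just obtained forces any object of $\Kbac(\pperm(G;\kk))$ to vanish in the quotient $\Kb(\pperm(G;\kk))/\cat{S}^+$, and hence to lie in $\cat{S}^+$. Thus $\cat{S}^+=\Kbac(\pperm(G;\kk))$, and the theorem follows.

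The argument is essentially formal, conditional on \Cref{Thm:regular-intro}, which carries the substantive content. The present theorem should be viewed as a refinement exploiting the additive idempotent-completeness of $\pperm$ to absorb the outer idempotent completion of the Verdier quotient into $\Kb(\pperm)$ itself. The main obstacle in executing the plan above is the careful bookkeeping of idempotent completion with Verdier quotients, and in particular the verification that the composite of the two cited equivalences identifies with the canonical functor $\Kb(\pperm(G;\kk))\to\Db(\kkG)$, so that the conservativity argument is valid.
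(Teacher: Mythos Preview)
Your argument contains a genuine gap at the ``general formula'' step. The identity you invoke,
\[
(\cat{T}/\cat{S})^\natural \simeq \cat{T}^\natural/\cat{S}^+,
\]
is \emph{not} a theorem: the Verdier quotient of an idempotent-complete triangulated category by a thick subcategory is in general \emph{not} idempotent-complete. What is true (and follows from Balmer--Schlichting) is only that $\cat{T}/\cat{S}\hookrightarrow\cat{T}^\natural/\cat{S}^+$ is fully faithful with dense image, hence
\[
(\cat{T}/\cat{S})^\natural \simeq (\cat{T}^\natural/\cat{S}^+)^\natural,
\]
with the outer $(-)^\natural$ on the right still present. Dropping that $(-)^\natural$ is precisely the content of \Cref{Thm:Kperm(kG)-intro}; see \Cref{Rem:idempotent-completion}, which explicitly warns against this conflation and explains why the three categories
\[
\frac{\Kb(\perm(G;\kk))}{\Kbac(\perm(G;\kk))}
\ \subseteq\
\frac{\Kb(\pperm(G;\kk))}{\Kbac(\pperm(G;\kk))}
\ \subseteq\
\bigg(\frac{\Kb(\perm(G;\kk))}{\Kbac(\perm(G;\kk))}\bigg)^\natural
\]
are a~priori distinct. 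Your conservativity argument correctly identifies $\cat{S}^+$ with $\Kbac(\pperm(G;\kk))$, but this only recovers the middle category above; the equality of the middle and right-hand categories is exactly what remains to be proved, and your argument assumes it.

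The paper supplies the missing ingredient via Thomason's classification of dense triangulated subcategories (\Cref{Rec:Thomason-dense}): the middle quotient sits densely in $\Db(\kkG)$ by \Cref{Thm:regular-intro}, so it equals $\Db(\kkG)$ if and only if the two have the same image in $\rmK_0$. This $\rmK_0$-computation (\Cref{Cor:KQ=G_0}) is the substantive step, carried out using Brauer's induction theorem in the modular case (\Cref{Prop:Bouc-Boltje}) together with $\kk\in\BcatGk$ (\Cref{Cor:k-Bcat}). In short, \Cref{Thm:Kperm(kG)-intro} is a genuine strengthening of \Cref{Thm:regular-intro} over a field, not a formal consequence of it.
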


Note that this is sharper than \Cref{Thm:regular-intro} for we do not idempotent-complete the quotient. (The confused reader might want to consult \Cref{Rem:idempotent-completion}.)
The crux of the matter is the following.
\Cref{Thm:regular-intro} tells us that our category $\BcatGk$ of complexes admitting `good' $p$-permutation resolutions is a \emph{dense} triangulated subcategory of the derived category, \ie $\BcatGk^\natural=\Db(\kkG)$.
\Cref{Thm:Kperm(kG)-intro} relies on the fact that we already have $\BcatGk=\Db(\kkG)$.
We use Thomason's classification of dense subcategories in triangulated categories to reduce the proof of $\BcatGk=\Db(\kkG)$ to an equality of Grothendieck groups: $\rmK_0(\BcatGk)=\RGk$. This is accomplished in \Cref{sec:dense-and-K_0}, using Brauer's Induction Theorem in the modular case, and it completes the proof of \Cref{Thm:Kperm(kG)-intro}. Finally, for a $\kkG$-module~$M$, the information that $M$ belongs to~$\BcatGk$ says more than just $M$ being the homology in degree zero of a complex of $p$-permutation modules. It does say that $M$ has a $p$-permutation resolution. This fact is another advantage of `good' resolutions (\Cref{Cor:0-resolution}) and we obtain \Cref{Thm:resol-intro} as a consequence.

\medbreak

Let us record an easy application to tensor-triangular geometry:
\begin{Cor}
The homogeneous spectrum $\Spech(\rmH^\sbull(G,\kk))$ is an open subspace of the tt-spectrum of the tensor-triangulated category~$\Kb(\perm(G;\kk))$.
\end{Cor}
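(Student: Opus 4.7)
The plan is to combine \Cref{Thm:Kperm(kG)-intro} with Balmer's general theory of the tt-spectrum of a Verdier quotient, supplemented by the standard identification $\Spc(\Db(\kkG))\cong\Spech(\rmH^\sbull(G,\kk))$.

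Since the Balmer spectrum is invariant under idempotent completion, $\Spc(\Kb(\perm(G;\kk)))=\Spc(\Kb(\pperm(G;\kk)))$ canonically. By \Cref{Thm:Kperm(kG)-intro}, the canonical functor $q\colon\Kb(\pperm(G;\kk))\to\Db(\kkG)$ is the Verdier quotient by the thick tensor ideal $\Kbac(\pperm(G;\kk))$; tensor-ideality follows from $\kk$-flatness of permutations, which keeps the tensor of a bounded acyclic complex of permutations with any bounded complex of permutations acyclic. Balmer's spectral formalism then provides a homeomorphism
\[
\Spc(\Db(\kkG))\isoto\{\mathcal{P}\in\Spc(\Kb(\perm(G;\kk)))\mid\Kbac(\pperm(G;\kk))\subseteq\mathcal{P}\},
\]
and composing with $\Spc(\Db(\kkG))\cong\Spech(\rmH^\sbull(G,\kk))$ realizes $\Spech(\rmH^\sbull(G,\kk))$ as the displayed subspace. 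All that remains is to show this subspace is \emph{open}.

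By the classification of (radical) thick tensor ideals in terms of Thomason subsets, openness is equivalent to $\Kbac(\pperm(G;\kk))$ being finitely generated as a thick tensor ideal; a single-generator witness suffices. The candidate I would try is $X_0:=\cone(P_\bullet\to\kk)$, where $P_\bullet\to\kk$ is a finite $p$-permutation resolution of the trivial module, supplied by \Cref{Thm:resol-intro}. Plainly $X_0\in\Kbac$; the content is the opposite inclusion $\Kbac\subseteq\langle X_0\rangle$. Given $Y\in\Kbac$, tensoring the defining triangle of $X_0$ with $Y$ produces $Y\otimes P_\bullet\to Y\to Y\otimes X_0$, in which the right-hand term is in $\langle X_0\rangle$ for free, so the task reduces to showing $Y\otimes P_\bullet\in\langle X_0\rangle$. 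Filtering $P_\bullet$ by its brutal truncations and invoking Frobenius reciprocity $Y\otimes\kk(G/H)\cong\Ind_H^G\Res_H^G Y$ on the graded pieces reduces matters to the analogous statement for proper subgroups $H<G$, hinting at an induction on $|G|$.

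The hard part will be executing this induction cleanly. First, the resolution $P_\bullet$ must be chosen so that no $P_k$ contains $\kk=\kk(G/G)$ as a direct summand; otherwise the Frobenius reduction collapses to the circular statement ``$Y\in\langle X_0\rangle$ implies $Y\in\langle X_0\rangle$''. For $G$ a $p$-group, the explicit Koszul-type resolution of \Cref{sec:1-resol} plausibly enjoys this property, but for general $G$ it deserves careful checking. Second, one must confirm the compatibility of $\Ind_H^G$ with the ideals generated by $X_0^H$ and $X_0$, so that acyclic complexes controlled over a subgroup can be promoted to $\langle X_0\rangle$ over $G$. Either subtlety might be bypassed by a more abstract argument -- for instance, a direct computation of $\mathrm{supp}(X_0)$ in $\Spc(\Kb(\perm(G;\kk)))$ using the comparison map to $\Spech$ -- but I would expect the inductive route to be the most transparent path to the corollary.
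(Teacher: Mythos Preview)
The paper does not actually supply a proof of this corollary: it is stated in the introduction as ``an easy application to tensor-triangular geometry,'' and the authors immediately defer the analysis of $\Spc(\Kb(\perm(G;\kk)))$ to forthcoming work. So there is no argument in the paper to compare yours against.

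That said, your framework is the right one. The identification of $\Spc(\Db(\kkG))$ with the subset $\{\cP:\Kbac\subseteq\cP\}$ via \Cref{Thm:Kperm(kG)-intro} and Balmer's quotient theory is correct, as is the reduction of openness to $\Kbac$ being generated by a single object as a radical tt-ideal. Your concern~(b) about compatibility of induction with the ideals is not a real obstacle: if you take $X_0^H:=\Res^G_H X_0$ (which is again acyclic with degree-zero term $\kk$ and degree-one term free), the projection formula gives $\Ind_H^G(\Res^G_H X_0\otimes W)\cong X_0\otimes\Ind_H^G W\in\langle X_0\rangle_G$, so $\Ind_H^G\langle X_0^H\rangle_H\subseteq\langle X_0\rangle_G$ automatically.

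Your concern~(a), however, is a genuine technical wrinkle. For the Koszul complex $\Kos(G;\kk)$ of \Cref{Exa:tens-ind-H=1} one has $\Kos_{|G|}=\Lambda^{|G|}(\kkG)\cong\kk$, so the na\"{\i}ve filtration argument is circular at the top term. One can observe that $\Lambda^i(\kkG)$ for $0<i<|G|$ has only \emph{proper} stabilizers (left-multiplication by $G$ on itself is transitive, so no proper nonempty subset of $G$ is $G$-stable), and that $\Lambda^{|G|-1}(\kkG)\cong\kkG$ is again free; exploiting both ends of the complex, or iterating the filtration, eventually pushes the argument through, but it is not a one-liner. In short: your outline is sound and completable, though the execution is more delicate than the paper's word ``easy'' might suggest, and the paper itself offers no details to adjudicate against.
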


We shall return to the analysis of~$\Spc(\Kb(\perm(G;\kk)))$ and of the closed complement of $\Spech(\rmH^\sbull(G,\kk))$ in forthcoming work on Artin motives.

\begin{Ack}
We thank Robert Boltje and Serge Bouc for removing our assumption that the field~$\kk$ should be `sufficiently large' in \Cref{Prop:Bouc-Boltje}. We thank Rapha\"el Rouquier for making his \href{https://www.math.ucla.edu/~rouquier/papers/perm.pdf}{letter to Beilinson and Vologodsky} available online, in response to an earlier version of this work.
\end{Ack}


\subsection*{Notation and convention}

\

We write $\simeq$ for isomorphisms and reserve $\cong$ for canonical isomorphisms.

A commutative noetherian ring~$\CR$ is \emph{regular} if it is locally of finite projective dimension. Most results reduce to the case where $\CR$ is connected.

For a left-noetherian ring $\Lambda$, not necessarily commutative, we write $\mmod{\Lambda}$ for the category of finitely generated left $\Lambda$-modules.

We use homological notation for complexes $\cdots \to M_n\to M_{n-1}\to \cdots$. We write $\Cb$ for categories of bounded complexes, $\Kb$ for homotopy categories of bounded complexes, and $\Db$ for bounded derived categories.
We abbreviate $\Db(\Lambda)$ for $\Db(\mmod{\Lambda})$. When we speak of a module $M$ as a complex, we mean it concentrated in degree zero.

All triangulated subcategories are implicitly assumed to be replete. We abbreviate `thick' for `triangulated and thick' (\ie closed under direct summands). We write $\thick(\cA)$ for the smallest thick subcategory containing~$\cA$.

We denote by $\cA^\natural$ the idempotent-completion (Karoubi envelope) of an additive category~$\cA$, or its obvious realization in an ambient idempotent-complete category. See details in~\cite{balmer-schlichting:idempotent-completion}, including $\Kb(\cA^\natural)\cong\Kb(\cA)^\natural$.

\begin{Rem}
\label{Rem:idempotent-completion}%
The reader should distinguish the following three categories
\[
\frac{\Kb(\perm(G;\CR))}{\Kbac(\perm(G;\CR))}
\subseteq
\frac{\Kb(\perm(G;\CR)^\natural)}{\Kbac(\perm(G;\CR)^\natural)}
\subseteq
\bigg(\frac{\Kb(\perm(G;\CR))}{\Kbac(\perm(G;\CR))}\bigg)^\natural.
\]
The one on the right is the idempotent-completion of the other two.
Moreover, if $\CR=\kk$ is a field then \Cref{Thm:Kperm(kG)-intro} implies that the middle one is already idempotent-complete so that the middle one and the one on the right coincide.

This subtlety is an important point to appreciate the present work.
In general, the Verdier quotient of an idempotent-complete category does not necessarily remain idempotent-complete.
In algebraic geometry, it was Thomason's major insight in~\cite{thomason-trobaugh:pour-etre-vrai} that the only difference between the derived category~$\Dperf(U)$  of an open subscheme $U\subseteq X$ and the obvious Verdier quotient of $\Dperf(X)$ was precisely an idempotent-completion. We return to Thomason's ideas in \Cref{sec:dense-and-K_0}.
\end{Rem}

We already used the following notation. We spell it out for clarity.
\begin{Not}
\label{Not:permutation}%
For a (finite) left $G$-set~$A$ we write $\CR(A)$ for the free $\CR$-module with $G$-action extended $\CR$-linearly from its basis~$A$. An $\CRG$-module is called a \emph{permutation module} if it is isomorphic to $\CR(A)$ for some $G$-set~$A$. We denote by $\perm(G;\CR)\subseteq \mmod{\CRG}$ the subcategory of permutation $\CRG$-modules. We use the phrase `$P$ is \emph{\ppermutation}' to say that $P$ belongs to~$\perm(G;\CR)^\natural$, that is, there exists $Q$ such that $P\oplus Q$ is permutation. This is meant to evoke the following:
\end{Not}

\begin{Rec}
\label{Rec:permutation}%
If $\CR=\kk$ is a field of characteristic~$p>0$, our \ppermutation\ modules are usually called \emph{$p$-permutation modules}. They are characterized as those modules which restrict to permutation modules on $p$-Sylow subgroups, \ie they are \emph{trivial source modules}. (In particular, if $G$ is a $p$-group then $\perm(G;\kk)^\natural=\perm(G;\kk)$; see~\cite[Theorem~8]{green:indecomposables}.) 
This characterization is specific to fields of characteristic~$p$ whereas the idempotent-completion $\perm(G;\CR)^\natural$ makes sense for any ring~$\CR$.
\end{Rec}

\begin{Rem}
\label{Rem:tensor}%
We tensor $\CRG$-modules over~$\CR$ and let $G$ act diagonally. Note that the tensor of permutation modules remains permutation. Recall also that if $P$ is free and $M$ is such that $\Res^G_1 M$ is $\CR$-free then $P\otimes M$ is free, by Frobenius reciprocity.
\end{Rem}


\section{Bounded permutation resolutions}
\label{sec:A(G)}%


\emph{In this section, all complexes are bounded except if explicitly stated otherwise.} As in~\cite{balmer-benson:permutation-resolutions} we begin with a stronger notion of resolution.

\begin{Def}
\label{Def:perm-resol}%
Let $X$ be a bounded complex of $\CRG$-modules and $m\in\bbZ$. An \emph{$m$-free permutation resolution of~$X$} is a quasi-isomorphism of complexes $s\colon P\to X$ where
$P$ is a bounded complex of permutation $\CRG$-modules which is $m$-free, meaning that $P_i$ is free for all $i\le m$. Clearly $m'$-free implies $m$-free when $m'\ge m$.

Similarly (\Cref{Rec:permutation}) an \emph{$m$-projective \ppermutation\ resolution} is a quasi-isomorphism $P\to X$ where all~$P_i$ are \ppermutation, and projective for $i\le m$.
\end{Def}

\begin{Rem}
\label{Rem:variation}%
The statements of \Cref{Lem:beware}, \Cref{Cor:0-resolution} and \Cref{Prop:perm-resol} hold with the words `permutation' replaced by `\ppermutation' and with `free' replaced by `projective'. We leave most such proofs to the reader.
\end{Rem}

\begin{Rem}
\label{Rem:new-beware}%
The word `resolution' in \Cref{Def:perm-resol} can be misleading for the complex $P$ is allowed to extend further to the right than~$X$ itself, even for $X=M[0]$, a single $\CRG$-module in degree zero. This can be corrected, when $m$ is large enough:
\end{Rem}

\begin{Lem}
\label{Lem:beware}%
Let $m\ge n$ be such that the complex $X$ is acyclic strictly below degree~$n$
and such that $X$ admits an $m$-free permutation resolution. Then $X$ admits an $m$-free resolution $P\to X$ where moreover $P_i=0$ for all $i<n$.
(See \Cref{Rem:variation}.)
\end{Lem}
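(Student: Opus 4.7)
The plan is to push the bottom of the resolution up one degree at a time until it reaches~$n$. Let $l$ be the least integer with $P_l\neq 0$ in the given $m$-free permutation resolution $s\colon P\to X$. If $l\ge n$ we are done; otherwise the reduction step below produces a new $m$-free permutation resolution $s'\colon P'\to X$ with $P'_i=0$ for $i\le l$, and iterating it at most $n-l$ times finishes the proof.

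Since $s$ is a quasi-isomorphism and $X$ is acyclic below~$n$, $P$ is acyclic at~$P_l$; combined with $P_{l-1}=0$ this says $\partial_{l+1}\colon P_{l+1}\to P_l$ is surjective. Because $l<n\le m$, the module $P_l$ is free, hence projective, so we may fix a section $\sigma\colon P_l\hookrightarrow P_{l+1}$ of $\partial_{l+1}$. Setting $K_{l+1}:=\ker(\partial_{l+1})$, we obtain a decomposition $P_{l+1}=K_{l+1}\oplus \sigma(P_l)$ of $\CR G$-modules. The relation $\partial_{l+1}\partial_{l+2}=0$ forces $\partial_{l+2}(P_{l+2})\subseteq K_{l+1}$, so the contractible two-term complex $A:=\bigl(\sigma(P_l)\xto{\id}P_l\bigr)$ placed in degrees $l+1,l$ is actually a direct summand of $P$ as a complex, with complement $B:=\bigl(\cdots\to P_{l+2}\to K_{l+1}\to 0\bigr)$. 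The trick is to swap $A$ for the contractible complex $A':=\bigl(P_l\xto{\id}P_l\bigr)$ placed one degree higher, in degrees $l+2,l+1$, and define $P':=B\oplus A'$. Concretely, $P'_i=P_i$ for $i>l+2$, $P'_{l+2}=P_{l+2}\oplus P_l$, $P'_{l+1}=K_{l+1}\oplus P_l\cong P_{l+1}$ via~$\sigma$, and $P'_i=0$ for $i\le l$. The composition $P=B\oplus A\twoheadrightarrow B\hookrightarrow B\oplus A'=P'$ is a chain homotopy equivalence (both $A$ and $A'$ being contractible), and composing with~$s$ yields the required quasi-isomorphism $s'\colon P'\to X$.

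The delicate point is ensuring that this swap preserves the permutation condition in every degree. That is precisely the role of the $m$-free hypothesis: freeness of~$P_l$ (rather than mere permutation) is what makes the section $\sigma$ exist and what guarantees the new $P_l$-summand in $P'_{l+2}$ is itself a permutation module (and even free whenever $l+2\le m$). The same argument works for $m$-projective \ppermutation\ resolutions (\Cref{Rem:variation}), replacing `free' by `projective' throughout.
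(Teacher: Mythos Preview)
Your argument is correct in substance, and it differs from the paper's proof in an interesting way. The paper does the truncation in one stroke: it uses that an acyclic bounded-below complex of projectives is split-exact to peel off the whole tail $Q''$ below degree~$n$ at once; the price is that the surviving degree-$n$ term $Q'_0$ is only a \emph{projective} summand of the original free module, so the paper then runs a stably-free argument and grafts on a contractible $L\xrightarrow{1}L$ to restore freeness. Your inductive one-step-at-a-time approach is more elementary precisely because it never disassembles $P_{l+1}$: you observe that $K_{l+1}\oplus P_l\cong P_{l+1}$ as $\CR G$-modules, so the new degree $l{+}1$ is still literally a permutation (indeed free) module and no stably-free trick is needed. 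The cost is only that you iterate.

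One small wrinkle to clean up: your displayed composite $P=B\oplus A\twoheadrightarrow B\hookrightarrow B\oplus A'=P'$ goes from $P$ to $P'$, so it cannot be composed with $s\colon P\to X$ to produce a map $P'\to X$. What you want is the reverse composite $P'=B\oplus A'\twoheadrightarrow B\hookrightarrow B\oplus A=P$, which is an honest chain map and a homotopy equivalence; then $s':=s\circ(\text{this map})$ is the desired quasi-isomorphism of complexes. With that fix the proof is complete.
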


\begin{proof}
We can assume~$n=0$. So $m\ge 0$. Let $s\colon Q\to X$ be an $m$-free permutation (resp.\ $m$-projective \ppermutation) resolution. Since $s$ is a quasi-isomorphism, $\rmH_i(Q)=0$ for $i<0$. Since $Q_i$ is projective for $i<0$, the complex~$Q$ `splits' in negative degrees. So we have a decomposition $Q=Q'\oplus Q''$ where the subcomplex $Q'=\cdots \to Q_1\to Q_0'\to 0\to \cdots$ is concentrated in non-negative degrees whereas $Q''=\cdots \to 0\to Q_0''\to Q_{-1}\to \cdots$ lives in non-positive degrees and is acyclic, \ie the composite $Q'\to Q\to X$ remains a quasi-isomorphism. At this stage $Q_0'$ is only projective but not necessarily free. (In the case of $m$-projective \ppermutation\ resolutions, the proof stops here with $P=Q'$.) Since $Q''$ is split exact, we see that $Q_0''$ is stably free: $Q_0''\oplus(\oplus_{i<0\textrm{,even}}Q_i)\simeq \oplus_{i<0\textrm{,odd}}Q_i$. Since $Q_0'\oplus Q_0''=Q_0$ is free, we see that $Q_0'$ is also stably free, namely $Q_0'\oplus L$ is free for the free~$L=\oplus_{i<0\textrm{,odd}}Q_i$. Adding to $Q'$ the complex $0\to L\xto{1} L\to 0$ with $L$ in degrees~1 and 0 (with zero map to~$X$), we obtain a new complex~$P$ and a quasi-isomorphism $P\to X$, where now $P_0=Q_0'\oplus L$ is free and $P_1=Q_1\oplus L$ is permutation (and free if~$Q_1$ was) and $P_i=Q_i$ for $i>1$ and $P_i=0$ for $i<0$. This $P\to X$ is the desired resolution.
\end{proof}

\begin{Cor}
\label{Cor:0-resolution}%
Let $M\in\mmod{\CRG}$ be such that, when viewed as a complex, $M$ admits a $0$-free permutation resolution in the sense of \Cref{Def:perm-resol}. Then $M$ admits a finite resolution by finitely generated permutation modules. (See \Cref{Rem:variation}.)
\qed
\end{Cor}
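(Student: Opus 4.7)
The plan is to invoke \Cref{Lem:beware} as a black box, with both parameters specialized to zero. View $M$ as a bounded complex concentrated in degree~$0$; then $M$ is trivially acyclic in all degrees strictly below~$0$, so the hypotheses of \Cref{Lem:beware} are met with $n=m=0$, thanks to the assumption that $M$ admits a $0$-free permutation resolution.

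Applying \Cref{Lem:beware} produces a quasi-isomorphism $s\colon P\to M$ where $P$ is a bounded complex of permutation $\CRG$-modules with $P_i=0$ for all $i<0$. Unpacking what such a quasi-isomorphism says about a complex concentrated in degree zero, we obtain an exact sequence
\[
\cdots \to P_2\to P_1\to P_0\to M\to 0
\]
which is a classical resolution of $M$. Boundedness of $P$ (inherited from the given resolution, noting that \Cref{Lem:beware}'s construction only adds a single contractible piece in degrees~$0$ and~$1$) ensures this resolution is finite. Finally, the standing convention that permutation modules arise from finite $G$-sets guarantees each $P_i$ is finitely generated.

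There is essentially no obstacle: the entire work has already been done in \Cref{Lem:beware}, where the delicate point was producing freeness of $P_0$ from mere projectivity by absorbing the stably-free defect into an added contractible summand. Once that lemma is available, the corollary is just a matter of specializing the indices and reading off what the resulting quasi-isomorphism means concretely. The \Cref{Rem:variation} parenthetical is handled analogously by invoking the `$p$-permutation/projective' variant of \Cref{Lem:beware}.
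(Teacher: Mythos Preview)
Your proposal is correct and is exactly the argument the paper intends: the corollary is stated with a bare \qed\ precisely because it is the specialization $m=n=0$ of \Cref{Lem:beware}, followed by reading off what a quasi-isomorphism from a non-negatively graded bounded complex into a module means. Your added remarks about boundedness and finite generation are accurate but already built into the conventions and the statement of \Cref{Lem:beware}, so they could be omitted without loss.
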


\begin{Lem}
\label{Lem:lift}%
Let $P,X,Y$ be bounded complexes, let $f\colon P\to X$ and $s\colon Y\to X$ be morphisms in~$\Cb(\CRG)$ such that $s$ is a quasi-isomorphism:
\[
\xymatrix@R=.1em{
& Y\ar[dd]^-{s}
\\
P \ar[rd]_-{f} \ar@{..>}[ru]^-{\hat f}
\\
& X.\!}
\]
Suppose that $m\in\bbZ$ is such that $X_i=0$ and $Y_i=0$ for $i>m$ and $P_i$ is projective for all~$i\le m$. Then there exists $\hat f\colon P\to Y$ such that $s\,\hat f$ is homotopic to~$f$.
\end{Lem}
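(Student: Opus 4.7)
The plan is to reformulate the existence of $\hat f$ as a null-homotopy statement into the mapping cone of $s$, and then prove the required vanishing by induction on the number of nonzero terms of $P$.

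Set $C := \cone(s)$ in $\Cb(\CRG)$. Since $s$ is a quasi-isomorphism, $C$ is acyclic; and since $X_i = Y_i = 0$ for $i > m$, we have $C_i = 0$ for $i > m+1$. Unpacking the cone, the existence of $\hat f$ with $s\hat f \simeq f$ is equivalent to the composite $g := \iota \circ f \colon P \to C$ being null-homotopic in $\Kb(\CRG)$, where $\iota\colon X \hookrightarrow C$ is the canonical inclusion. So it will suffice to prove the following general vanishing: \emph{for any bounded complex $P$ with $P_i$ projective for $i \le m$ and any acyclic bounded complex $A$ with $A_i = 0$ for $i > m+1$, every chain map $P \to A$ is null-homotopic.}

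I will prove this by induction on the number $l$ of nonzero terms of $P$. The case $l = 0$ is vacuous. For $l = 1$, write $P = P_N[N]$. If $N > m$, then either $A_N = 0$ (when $N > m+1$) or $\Ker(d_A\colon A_{m+1}\to A_m) = 0$ (when $N = m+1$, using acyclicity and $A_{m+2} = 0$), so no nonzero chain maps $P \to A$ exist. If $N \le m$, a chain map $g\colon P_N[N] \to A$ is equivalently a map $g_N \colon P_N \to \Ker(d_A|_{A_N}) = \Img(d_A\colon A_{N+1} \to A_N)$ (by acyclicity); projectivity of $P_N$ will then lift $g_N$ through the surjection $A_{N+1} \twoheadrightarrow \Img(d_A|_{A_{N+1}})$, providing the null-homotopy.

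For $l \ge 2$, let $N := \max\{i : P_i \ne 0\}$ and consider the degreewise split short exact sequence $0 \to \sigma_{\le N-1}P \to P \to P_N[N] \to 0$, where $\sigma_{\le N-1}P$ denotes the brutal truncation. It lifts to a distinguished triangle in $\Kb(\CRG)$, and the associated long exact sequence for $\Hom_{\Kb(\CRG)}(-, A)$ has $\Hom(\sigma_{\le N-1}P, A) = 0$ (by induction, as $\sigma_{\le N-1}P$ has $l-1$ nonzero terms and still satisfies the projectivity hypothesis) and $\Hom(P_N[N], A) = 0$ (base case), so by exactness $\Hom(P, A) = 0$. The main technical subtlety will be the borderline subcase $N = m+1$ of the base step, cleanly handled by the fact that acyclicity together with $A_{m+2}=0$ renders $d_A|_{A_{m+1}}$ injective.
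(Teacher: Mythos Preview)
Your proof is correct and follows essentially the same approach as the paper: reduce to showing that the composite $P\to X\to\cone(s)$ is null-homotopic, using that $\cone(s)$ is acyclic and vanishes above degree~$m+1$. The only difference is in how the null-homotopy is established---the paper builds it directly by the standard degree-by-degree lifting argument, whereas you d\'evisse via brutal truncation and the long exact sequence in~$\Kb$; both are routine and amount to the same thing.
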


\begin{proof}
Let $Z$ be an acyclic complex such that $Z_i=0$ for $i>m+1$. Then any map $P\to Z$ is null-homotopic, as one can build a homotopy using the usual induction argument, that only requires $P_i$ projective for $i\le m$ to lift against the epimorphism $Z_{i+1}\onto \im(Z_{i+1}\to Z_{i})$. Now $Z:=\cone(s)$ is such a complex. So the composite $P\xto{f} X\to \cone(s)$ is zero in~$\Kb(\mmod{\CRG})$. Hence $f$ factors as claimed.
\end{proof}

\begin{Prop}
\label{Prop:perm-resol}%
Let $s\colon Y\to X$ be a quasi-isomorphism. Then $X$ admits $m$-free permutation resolutions for all $m\ge 0$ if and only if~$Y$ does. (See \Cref{Rem:variation}.)
\end{Prop}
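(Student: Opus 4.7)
The proposition is an ``if and only if'', but the two directions differ dramatically in difficulty.

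The easy direction: suppose $Y$ admits $m$-free permutation resolutions for all $m\ge 0$. Given $m\ge 0$, pick any $m$-free permutation resolution $t\colon Q\to Y$. Then the composite $s\circ t\colon Q\to X$ is a quasi-isomorphism (quasi-isomorphisms compose), and the source is unchanged, so $Q\to X$ is an $m$-free permutation resolution of~$X$. Nothing subtle here.

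The substantive direction: suppose $X$ admits $m$-free permutation resolutions for all $m\ge 0$, and fix some $m\ge 0$ for which we need to resolve~$Y$. The strategy is to produce the resolution of~$Y$ by lifting a resolution of~$X$ along~$s$, which is exactly what \Cref{Lem:lift} is designed for. The only thing to arrange is that the hypotheses of the lemma are met. Since $X$ and $Y$ are bounded, choose $m'\ge m$ large enough that $X_i=0$ and $Y_i=0$ for all $i>m'$. By assumption, $X$ admits an $m'$-free permutation resolution $f\colon P\to X$; in particular $P_i$ is free (hence projective) for all $i\le m'$. The lemma then furnishes $\hat f\colon P\to Y$ with $s\,\hat f$ homotopic to~$f$.

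It remains to observe that $\hat f$ is itself a quasi-isomorphism. Indeed, $s\,\hat f\simeq f$ implies that $\rmH_*(s)\circ \rmH_*(\hat f)=\rmH_*(f)$, and both $\rmH_*(f)$ and $\rmH_*(s)$ are isomorphisms, so $\rmH_*(\hat f)$ is too. Since $P$ is bounded and $m'$-free permutation, and $m'\ge m$, this $\hat f\colon P\to Y$ is the desired $m$-free permutation resolution of~$Y$. Exactly the same argument, invoking \Cref{Rem:variation}, handles the \ppermutation/projective version. The main obstacle is really just bookkeeping: one has to enlarge $m$ to $m'$ in order to force vanishing of $X$ and $Y$ in degrees above~$m'$ and thereby enter the regime where \Cref{Lem:lift} applies; the passage from $m'$-free back to $m$-free is free of charge because $m'$-free implies $m$-free.
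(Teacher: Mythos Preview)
Your proof is correct and follows essentially the same approach as the paper's: the easy direction is composition with~$s$, and for the other direction you enlarge $m$ so that $X$ and $Y$ vanish above it, apply \Cref{Lem:lift} to lift an $m'$-free resolution of~$X$ along~$s$, and conclude by 2-out-of-3. The only cosmetic difference is that the paper writes ``increasing $m$ if necessary'' where you introduce a separate~$m'$.
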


\begin{proof}
From~$Y$ to~$X$ is trivial. So suppose that $X$ has the property and let us show it for~$Y$. Let $m\ge 0$. Increasing~$m$ if necessary, we can assume that $X_i=Y_i=0$ for all~$i>m$. Let then $f\colon P\to X$ be an $m$-free permutation resolution of~$X$. By \Cref{Lem:lift}, there exists $\hat f\colon P\to Y$ such that $s\, \hat f\sim f$. By 2-out-of-3, $\hat f\colon P\to Y$ is a quasi-isomorphism, hence yields an $m$-free permutation resolution of~$Y$.
\end{proof}

So far we dealt with complexes on the nose, in $\Cb(\CRG)$. The above proposition allows us to pass to the derived category, if we make sure to require the existence of $m$-free permutation resolutions for \emph{all}~$m\ge0$.
\begin{Def}
\label{Def:A(G)}%
Using the special permutation resolutions of \Cref{Def:perm-resol}, we have two well-defined replete subcategories of the derived category
\begin{align*}
\AcatGR&=\left\{ X\in \Db(\CRG)\ \bigg|\ {X\textrm{ admits $m$-free permutation} \atop \textrm{resolutions, for all }m\ge 0}\ \right\}\quadtext{and}
\\
\BcatGR&=\left\{ X\in \Db(\CRG)\ \bigg|\ {X\textrm{ admits $m$-projective \ppermutation} \atop \textrm{resolutions, for all }m\ge 0}\ \right\}.
\end{align*}
\end{Def}

\begin{Prop}
\label{Prop:A(G)-B(G)}%
The two subcategories $\AcatGR\subseteq\BcatGR$ above are triangulated subcategories of~$\Db(\CRG)$.
\end{Prop}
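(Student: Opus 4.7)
My plan is to split the proposition into two parts: the inclusion $\AcatGR\subseteq\BcatGR$, and the triangulated-subcategory property of each. The inclusion is immediate from the definitions, since permutation modules are in particular \ppermutation\ and free modules are projective, so any $m$-free permutation resolution qualifies as an $m$-projective \ppermutation\ resolution. Both subcategories contain~$0$ and are closed under the shift $\Sigma^{\pm 1}$, because shifting an $m$-free permutation resolution $P\to X$ only translates the freeness range, which is harmless since the definition quantifies over \emph{all} $m\ge 0$. The real content is therefore closure under cones; by the symmetry flagged in \Cref{Rem:variation} it suffices to handle~$\AcatGR$, the argument for~$\BcatGR$ being obtained by substituting `\ppermutation'/`projective' for `permutation'/`free' throughout.

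So fix a morphism $f\colon X\to Y$ in $\Db(\CRG)$ with $X,Y\in\AcatGR$ and an integer $m\ge 0$. The plan is to produce an $m$-free permutation resolution of $Z\cong\cone(f)$ as the honest mapping cone of a chain-level lift of~$f$. By \Cref{Prop:perm-resol}, after replacing $X$ by a quasi-isomorphic complex I may assume $f$ is represented by a chain map in $\Cb(\CRG)$. First choose an $m$-free permutation resolution $s_Y\colon P_Y\to Y$; since $Y$ and $P_Y$ are bounded, there is some $M\ge m$ with $Y_i=(P_Y)_i=0$ for $i>M$. Then choose an $M$-free permutation resolution $s_X\colon P_X\to X$. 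The hypotheses of \Cref{Lem:lift} are now met with parameter~$M$, producing a chain map $\tilde f\colon P_X\to P_Y$ with $s_Y\,\tilde f\sim f\,s_X$. The mapping cone $\cone(\tilde f)$ is a bounded complex of permutation modules, and $\tilde f$ induces a morphism of distinguished triangles in $\Db(\CRG)$ whose first two components are the quasi-isomorphisms~$s_X$ and~$s_Y$, forcing the third component $\cone(\tilde f)\to\cone(f)\cong Z$ to be a quasi-isomorphism as well. For $i\le m$ one has $\cone(\tilde f)_i=(P_Y)_i\oplus(P_X)_{i-1}$, and both summands are free: $(P_Y)_i$ by the $m$-freeness of~$P_Y$, and $(P_X)_{i-1}$ by the $M$-freeness of~$P_X$ (since $i-1\le M$). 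This is the desired $m$-free permutation resolution of~$Z$.

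The only delicate point is the ordering of the choices needed to apply \Cref{Lem:lift}: one must first fix the target $m$-freeness by choosing~$P_Y$, then enlarge~$M$ to dominate the upper vanishing of both~$Y$ and~$P_Y$, and only then select~$P_X$ to be $M$-free. Once this bookkeeping is arranged, the rest is standard mapping-cone manipulation and presents no genuine obstacle.
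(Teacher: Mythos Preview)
Your proof is correct and follows essentially the same route as the paper's: reduce to a genuine chain map via \Cref{Prop:perm-resol}, choose the target resolution first at level~$m$, inflate the freeness parameter for the source resolution to dominate the vanishing range, apply \Cref{Lem:lift}, and read off the required freeness in the mapping cone. The only cosmetic differences are your explicit mention of closure under shifts and of the inclusion $\AcatGR\subseteq\BcatGR$, neither of which the paper spells out.
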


\begin{proof}
We prove it for $\AcatGR$. The proof for $\BcatGR$ is similar. It suffices to show that if $f\colon X\to Y$ is a morphism in~$\Db(\CRG)$ with $X,Y\in\AcatGR$ then $\cone(f)\in\AcatGR$. The morphism $f$ is represented by a fraction $X\overset{s}\lto Z\xto{g}Y$ in~$\Cb(\CRG)$ with $s$ a quasi-isomorphism. By \Cref{Prop:perm-resol}, we have $Z\in \AcatGR$. Since $\cone(f)\simeq \cone(g)$ in~$\Db(\CRG)$, we can assume that $f\colon X\to Y$ is a plain morphism of complexes. Let now $n\ge 0$. Since $Y$ belongs to~$\AcatGR$, choose an $n$-free permutation resolution $t\colon Q\to Y$. Choose $m\gg n$ such that $Q_i=Y_i=0$ for $i>m$, using that $Q$ and~$Y$ are bounded. Since $X\in\AcatGR$, choose an $m$-free permutation resolution $s\colon P\to X$. We thus have the (plain) morphisms~$f,s,t$:
\[
\xymatrix@R=1.5em{
P\ar[d]_-{s} \ar@{..>}[r]_-{h}
& Q \ar[d]^-{t}
\\
X\ar[r]^-{f}
& Y
}
\]
By \Cref{Lem:lift}, there exists $h\colon P\to Q$ such that $t\,h\sim f\, s$. Since $s$ and~$t$ are quasi-isomorphisms, so is the induced map $\cone(h)\to \cone(f)$. Now, the mapping cone of~$h$ is a complex of permutation modules that is free in degrees~$\le n$ since $P$ and~$Q$ are. As $n\ge0$ was arbitrary, we proved $\cone(f)\in\AcatGR$ as claimed.
\end{proof}

\begin{Rem}
\label{Rem:p-gp}%
Let $G$ be a $p$-group and $\CR=\kk$ a field of characteristic~$p$. Then $\Db(\kkG)$ is generated as a triangulated subcategory by~$\kk$. So, by \Cref{Prop:A(G)-B(G)}, the triangulated subcategory $\AcatGk=\BcatGk$ is equal to $\Db(\kkG)$ if and only if it contains~$\kk$.
In fact, we will prove in \Cref{Thm:kG-pperm-resolutions} that $\BcatGk=\Db(\kkG)$ holds, for all finite groups.
\end{Rem}

\begin{Rem}
\label{Rem:M-A(G)}%
It follows easily from \Cref{Prop:A(G)-B(G)} that an $\CRG$-module~$M$ belongs to~$\BcatGR$ if and only if all its Heller loops (syzygies in a projective resolution) admit finite \ppermutation resolutions. This can be sharpened as follows.
\end{Rem}

\begin{Prop}
\label{Prop:M-resol-lim}%
Let $M$ be an $\CRG$-module such that $M$ belongs to~$\BcatGR$.
Let $\cdots \to P_n\to \cdots \to P_0\xto{\pi} M\to 0$ be a possibly unbounded resolution of~$M$ by finitely generated projective $\CRG$-modules, viewed as a quasi-isomorphism~$P\to M$. Then there exists a sequence of complexes~$\{Q(n)\}_{n\in\bbN}$ in $\Chain_{\geq 0}(\CRG)$ and a commutative diagram of quasi-isomorphisms
\[
\xymatrix{
P \ar[d]_-{\pi} \ar[rd] \ar@{}[rrd]|-{\cdots} \ar@/^.5em/[rrrd] \ar@/^1em/[rrrrd] \ar@{}[rrrrrd]^-{\kern10em\cdots}
\\
\kern3em M=Q(0) 
& Q(1) \ar[l] 
& \cdots \ar[l]
& Q(n) \ar[l] 
& Q(n+1) \ar[l] 
& \cdots \ar[l]
}
\]
such that for each $n\geq 1$:
\begin{enumerate}[\rm(1)]
\item
\label{it:M-resol-lim-i}%
The complex~$Q(n)$ is bounded and consists of \ppermutation $\CRG$-modules.
\item
\label{it:M-resol-lim-ii}%
The map $P_d\to Q(n)_d$ is the identity for all~$d<n$.
\end{enumerate}
In particular, the sequence~$\cdots \to Q(n)\to \cdots \to Q(0)$ eventually stabilizes in each degree and $P$ is the limit of that sequence in~$\Chain(\CRG)$.
\end{Prop}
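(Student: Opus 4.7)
The plan is to construct the tower $\{Q(n)\}$ by induction on~$n$. Set $Q(0)=M$ with $P\to Q(0)$ the augmentation. For the inductive step, assume that $Q(n)$, together with the chain maps $P\to Q(n)$ and $Q(n)\to Q(n-1)$, has been constructed satisfying conditions~(1)--(2); write $f_n\colon P_n\to Q(n)_n$ for the degree-$n$ component of $P\to Q(n)$ and $L_n$ for the top nonzero degree of~$Q(n)$. Consider the syzygy $K_{n+1}:=\Ker(d^P_n\colon P_n\to P_{n-1})$ (and $K_1:=\Ker(P_0\to M)$). Since $M$ lies in the triangulated subcategory $\BcatGR$ of~$\Db(\CRG)$ and every projective $\CRG$-module also lies there (being $p$-permutation, concentrated in one degree), the short exact sequences $0\to K_{n+1}\to P_n\to K_n\to 0$ yield inductively that $K_{n+1}\in\BcatGR$. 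Hence $K_{n+1}$ admits an $m$-projective \ppermutation\ resolution $R\to K_{n+1}$ for any~$m$; choose $m\geq L_n-n-1$.

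Define $Q(n+1)$ by setting $Q(n+1)_i=P_i$ for $i\leq n$ (with differentials from~$P$) and $Q(n+1)_i=R_{i-n-1}$ for $i\geq n+1$ (with shifted differentials from~$R$); the bridging map $Q(n+1)_{n+1}=R_0\to P_n=Q(n+1)_n$ is the composition $R_0\onto K_{n+1}\hook P_n$, whose image equals $K_{n+1}=\Ker(d^P_n)$. A direct check then shows $Q(n+1)$ is a bounded resolution of~$M$ by \ppermutation\ modules. The chain map $Q(n+1)\to Q(n)$ is the identity on $P_i$ for $i<n$, equals $f_n$ in degree~$n$, and in degrees~$\geq n+1$ is supplied by Lemma~\ref{Lem:lift}: lift the augmentation $R\to K_{n+1}\hook\Ker(d^{Q(n)}_n)$ (via $f_n$) to a chain map $R\to Q(n)_{\geq n+1}$ (shifted down by~$n+1$). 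The choice $m\geq L_n-n-1$ guarantees the hypotheses of the lemma, and since the target $\Ker(d^{Q(n)}_n)$ is concentrated in a single degree, the homotopy it produces must vanish, yielding an on-the-nose chain map. Finally, the map $P\to Q(n+1)$ is taken to be the identity in degrees $\leq n$, and is extended in higher degrees via projectivity of the~$P_i$.

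The main obstacle is arranging that $P\to Q(n+1)\to Q(n)$ agrees with the pre-existing $P\to Q(n)$ on the nose, not merely up to chain homotopy. To handle this, we replace~$R$ (before applying the lemma) by an enlargement $R'=R\oplus C$, where $C$ is a suitable contractible complex built from copies of the projectives $P_{n+1},\ldots,P_{L_n}$ (essentially a mapping cylinder for the truncation $P_{[n+1,L_n]}\to R$). The enlargement keeps $R'$ a bounded \ppermutation\ resolution of~$K_{n+1}$ while providing enough slack that the lifts produced by Lemma~\ref{Lem:lift} can be chosen compatibly with the already-given $P\to Q(n)$. With the tower thus built, the stabilization and limit claims are immediate from~(2): for each fixed degree~$d$ we have $Q(n)_d=P_d$ whenever $n>d$, and each transition $Q(n+1)_d\to Q(n)_d$ is forced to be the identity on~$P_d$ (since it must compose with $P_d\xto{\id}Q(n+1)_d$ to give $P_d\xto{\id}Q(n)_d$). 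Each inverse system $\{Q(n)_d\}_n$ is therefore eventually constant, whence $P=\lim_n Q(n)$ in~$\Chain(\CRG)$.
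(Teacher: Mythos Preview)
Your approach is essentially the paper's: build $Q(n+1)$ inductively by splicing a \ppermutation\ resolution of a syzygy of~$M$ onto a truncation of~$P$, using that syzygies of~$M$ remain in~$\BcatGR$. The difference lies in \emph{where} you splice. You cut at degree~$n$, using $K_{n+1}=\Ker(d^P_n)$; the paper cuts at a degree $m\ge n$ chosen large enough that $Q(n)_d=0$ for~$d>m$, so that the given map $P\to Q(n)$ factors strictly through the good truncation $P'=(\cdots 0\to N\to P_m\to\cdots\to P_0)$ with $N=\im(P_{m+1}\to P_m)$. Then $Q(n+1)$ is built by replacing~$N$ in~$P'$ with a \ppermutation\ resolution, giving a degreewise surjective quasi-isomorphism $Q(n+1)\to P'$. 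Since $P$ consists of projectives, the map $P\to P'$ lifts on the nose to $P\to Q(n+1)$, and $Q(n+1)\to Q(n)$ is simply the composite through~$P'$. Strict commutativity of the whole diagram is therefore automatic.

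By splicing at~$n$ you forfeit this factorization, and that is exactly the obstacle you flag. Your proposed remedy---enlarging $R$ by a contractible complex ``essentially a mapping cylinder for the truncation $P_{[n+1,L_n]}\to R$''---is pointing in a workable direction (a map out of a mapping cylinder is precisely a pair of maps together with a homotopy, which is the kind of data you need), but as written it is not a proof: you have not constructed the map $P_{[n+1,L_n]}\to R$ whose cylinder you take, nor explained why the resulting map $\text{Cyl}\to Q(n)_{\geq n+1}$ can be chosen so that its restriction along the cylinder inclusion recovers the given $g|_{\geq n+1}$ on the nose. This requires producing both a lift $\beta\colon R\to Q(n)_{\geq n+1}$ and an explicit chain homotopy $g|_{\geq n+1}\simeq\beta\circ\psi$, and your appeal to \Cref{Lem:lift} only supplies one lift up to homotopy, not this compatible package. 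The paper's device of splicing above the top of~$Q(n)$ sidesteps all of this for free; I would recommend adopting it.
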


\begin{proof}
Suppose we have the factorization via~$Q(n)$ for some~$n\ge 0$ satisfying~\eqref{it:M-resol-lim-i} and~\eqref{it:M-resol-lim-ii} if $n\geq 1$. Since $Q(n)$ is bounded, there exists $m\ge n$ such that the quasi-isomorphism $P\to Q(n)$ factors as $P\to P'\to Q(n)$ via the canonical $m$-truncation~$P'$ of~$P$
\[
\xymatrix@R=1em{
\kern1em P= \ar[d]
&& \cdots \ar[r]
& P_{m+2} \ar[r] \ar[d]
& P_{m+1} \ar[r] \ar@{->>}[d]
& P_{m} \ar[r] \ar@{=}[d]
& P_{m-1} \ar[r] \ar@{=}[d]
& \cdots
\\
\kern1em P':=
&& \cdots \ar[r]
& 0 \ar[r]
& N \ar[r]
& P_{m} \ar[r]
& P_{m-1} \ar[r]
& \cdots
}
\]
where $N=\im(P_{m+1}\to P_m)$. By \Cref{Prop:A(G)-B(G)}, we know that $N$ still belongs to~$\BcatGR$ and in particular it admits a finite \ppermutation resolution. Let $Q(n+1)$ be the complex obtained by replacing $N$ in~$P'$ by this \ppermutation resolution, via splicing. So we have the four (plain) quasi-isomorphisms
\[
\xymatrix@R=1em{
P \ar[d] \ar[rd] \ar@/^.5em/@{-->}[rrd]^-{\exists}
\\
Q(n)
& P' \ar[l]
& Q(n+1). \ar[l]
}
\]
There exists a dashed arrow $P\to Q(n+1)$ making the diagram commute because $P$ is a complex of projectives. By construction, $Q(n+1)$ satisfies~\eqref{it:M-resol-lim-i}, as well as~\eqref{it:M-resol-lim-ii} since $m\ge n$ and since the maps $Q(n+1)_d\to P'_d\lto P_d$ are the identity for all~$d\le m$. The map $Q(n+1)\to Q(n)$ is the above composite.
\end{proof}

\begin{Rec}
\label{Rec:dense}%
A triangulated subcategory $\cA\subseteq\cT$ is \emph{dense} if every object~$X$ of~$\cT$ is a direct summand of an object $X\oplus Y$ of~$\cA$. This amounts to $X\oplus \Sigma X\in\cA$ since $X\oplus \Sigma X=\cone\big(\smat{0&0\\0&1}\colon X\oplus Y\to X\oplus Y\big)$. Hence the thick closure of~$\cA$ is
\[
\thick(\cA)=\SET{X\in\cT}{\exists\,Y\in\cT\textrm{ s.t.\ }X\oplus Y\in\cA}=\SET{X\in\cT}{X\oplus \Sigma X\in\cA}.
\]
When $\cT$ is idempotent-complete (like~$\Db(\CR G)$ here) we have $\thick(\cA)=\cA^\natural$.
\end{Rec}

\begin{Prop}
\label{Prop:A-dense-in-B}%
The triangulated subcategory $\AcatGR\subseteq\BcatGR$ is dense.
\end{Prop}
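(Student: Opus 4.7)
The plan is to use the characterization from \Cref{Rec:dense}: the inclusion $\AcatGR \subseteq \BcatGR$ is dense if and only if $X \oplus \Sigma X \in \AcatGR$ for every $X \in \BcatGR$. So I would fix such an $X$ together with an arbitrary integer $m \geq 0$ and manufacture an $m$-free permutation resolution of $X \oplus \Sigma X$ out of an $m$-projective \ppermutation resolution $s\colon P \to X$, which exists by hypothesis.

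The natural first candidate is $P \oplus \Sigma P$, which resolves $X \oplus \Sigma X$ and whose degree-$j$ term is $P_j \oplus P_{j-1}$. The plan is to enlarge this by a bounded contractible complex so as to promote each term into an honest permutation module, moreover free in degrees $\leq m$. For each $i$ in the (finite) support of $P$, use that $P_i$ is \ppermutation to choose $Q_i$ with $P_i \oplus Q_i$ permutation; for $i \leq m$ additionally invoke projectivity of $P_i$ to arrange $P_i \oplus Q_i$ to be free (a free module is permutation on the regular $G$-set, so the two demands are compatible).

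Next, assemble the contractible complex $C := \bigoplus_i D^i$ where $D^i$ denotes the disk $Q_{i-1} \xrightarrow{\id} Q_{i-1}$ placed in degrees $i$ and $i-1$; its degree-$j$ term is $C_j = Q_j \oplus Q_{j-1}$. Setting $\tilde P := P \oplus \Sigma P \oplus C$, the map $s \oplus \Sigma s \oplus 0 \colon \tilde P \to X \oplus \Sigma X$ is a quasi-isomorphism (since $C$ is acyclic), while
\[
\tilde P_j = (P_j \oplus Q_j) \oplus (P_{j-1} \oplus Q_{j-1})
\]
is a sum of two permutation modules, free whenever $j \leq m$ (since then $j-1 \leq m$ as well). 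This furnishes the required $m$-free permutation resolution and hence shows $X \oplus \Sigma X \in \AcatGR$.

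The main point requiring care, and the only genuine conceptual obstacle, is that the complements $Q_i$ are chosen independently in each degree and do not \emph{a priori} assemble into a subcomplex of anything related to $P$. The disk construction for $C$ is precisely what sidesteps this: it decouples the $Q_i$'s across adjacent degrees, so that only the direct-sum regrouping of terms—not any chain-level compatibility—is needed for $\tilde P$ to become permutation in each degree and free below degree $m$.
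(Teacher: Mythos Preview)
Your argument is correct and essentially identical to the paper's own proof: both reduce to showing $X\oplus\Sigma X\in\AcatGR$, pick complements $Q_i$ making $P_i\oplus Q_i$ permutation (free for $i\le m$), and add the contractible disks $Q_i\xto{1}Q_i$ (in degrees $i{+}1,i$, which is your $D^{i+1}$) to $P\oplus\Sigma P$ so that each degree regroups as $(P_j\oplus Q_j)\oplus(P_{j-1}\oplus Q_{j-1})$. The only cosmetic difference is that the paper phrases the outcome as a homotopy equivalence $\tilde P\simeq P\oplus\Sigma P$ rather than writing down the quasi-isomorphism to $X\oplus\Sigma X$ explicitly.
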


\begin{proof}
It suffices to show that for every $X\in\BcatGR$, we have $X\oplus\Sigma X\in\AcatGR$. By \Cref{Def:perm-resol}, it suffices to show that if $P$ is an $m$-projective complex of \ppermutation\ $\CRG$-modules for some~$m\ge 0$ then $P\oplus \Sigma P$ is homotopy equivalent to an $m$-free complex~$\tilde{P}$ of permutation $\CRG$-modules. For each~$i$, since $P_i$ is \ppermutation\ (resp.\ projective for $i\le m$), there exists $Q_i$ such that $P_i\oplus Q_i$ is permutation (resp.\ free for $i\le m$). Adding to $P\oplus \Sigma P$ short complexes $\cdots 0\to Q_i\xto{1} Q_i\to 0\cdots$, with $Q_i$ in degrees~$i+1$ and~$i$, yields the wanted $m$-free complex~$\tilde{P}$ of permutation $\CRG$-modules.
\end{proof}

\begin{Rem}
\label{Rem:where-to}%
The thick closure of the subcategory $\AcatGR$ of~$\Db(\CRG)$
\[
\AcatGR^\natural=\BcatGR^\natural=\thick(\AcatGR)=\thick(\BcatGR)
\]
is a key player in this paper, as it will turn out (\Cref{Thm:Kperm->A(G)}) to be the essential image of the functor~$\bar{\Upsilon}$ in~\eqref{eq:Kb->Db}.
We also want to decide when $\AcatGR^\natural$ or $\AcatGR$ or $\BcatGR$ coincide with~$\Db(\CRG)$; see Sections~\ref{sec:im(F)} and~\ref{sec:dense-and-K_0}.
More ambitiously, we want an invariant on~$\Db(\CRG)$ that detects~$\AcatGR^\natural$. This is the subject of~\cite{balmer-gallauer:resol-big}.
\end{Rem}

\begin{Exa}
\label{Exa:G=1}%
For~$G$ trivial, $\Acat{1}{\CR}^\natural=\Dperf(\CR)$, the perfect complexes.
\end{Exa}

Let us start with generalities about the Mackey 2-functor~$\Acat{?}{\CR}$.

\begin{Rec}
\label{Rec:Ind-Res}%
Recall that for a subgroup $H\le G$, induction is a two-sided adjoint to restriction. In particular, these functors preserve injectives and projectives and yield well-defined functors on derived categories without need to derive them on either side. Recall also that the composite (for $Y$ a module or a complex over~$H$)
\begin{equation}
\label{eq:Ind-Res-separable}%
\xymatrix{
Y\ar[r]^-{\eta^{\ell}} \ar@{-<} `d/1em[r]`/1em[rr]|-{\ \id\ } [rr]
& \Res^G_H\Ind_H^G(Y)\ar[r]^-{\eps^r}
& Y
}
\end{equation}
of (usual) unit for the $\Ind\adj\Res$ adjunction and (usual) counit for the $\Res\adj\Ind$ adjunction is the identity. The other composite (for~$X$ over~$G$)
\begin{equation}
\label{eq:Ind-Res-cohomological}%
\xymatrix{
X\ar[r]^-{\eta^{r}} \ar@{-<} `d/1em[r]`/1em[rr]|-{\ [G:H]\cdot\ } [rr]
& \Ind_H^G\Res^G_H(X)\ar[r]^-{\eps^{\ell}}
& X
}
\end{equation}
of (usual) $\Res\adj\Ind$ unit and $\Ind\adj\Res$ counit is multiplication by~$[G:H]$.
\end{Rec}

\begin{Prop}
\label{Prop:Ind-Res}%
Let $H\le G$ be a subgroup. Let $X\in\Db(\CRG)$ and $Y\in\Db(\CR H)$.
\begin{enumerate}[\rm(a)]
\item
\label{it:Res}%
If $X$ belongs to~$\AcatGR^\natural$ then $\Res^G_H X$ belongs to~$\Acat{H}{\CR}^\natural$.
\smallbreak
\item
\label{it:Ind}%
$Y$ belongs to~$\Acat{H}{\CR}^\natural$ if and only if~$\Ind_H^G Y$ belongs to~$\AcatGR^\natural$.
\smallbreak
\item
\label{it:Res-[G:H]-inv}%
If $\Res^G_H X$ belongs to~$\Acat{H}{\CR}^\natural$ and multiplication by~$[G:H]$ is invertible on~$X$ (\eg\ if $[G:H]$ is invertible in~$\CR$) then $X$ belongs to~$\AcatGR^\natural$.
\end{enumerate}
\end{Prop}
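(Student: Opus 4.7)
The plan is to exploit the good behavior of induction and restriction on the classes of modules we care about, combined with the two composites in \Cref{Rec:Ind-Res}, which let us pass between $G$ and $H$ by recognizing direct summands. A preliminary observation that will be used throughout is that $\Res^G_H$ and $\Ind_H^G$ are exact functors on $\modname$ (so they preserve quasi-isomorphisms of bounded complexes), both send permutation modules to permutation modules (since $\Res^G_H\CR(G/K)$ decomposes as the permutation module on the $H$-set $H\backslash G/K$, and $\Ind_H^G\CR(H/L)=\CR(G/L)$), and both send free modules to free modules (for $\Res$, because $\CR G$ is $\CR H$-free of finite rank; for $\Ind$, because $\Ind_H^G\CR H=\CR G$). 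Consequently $\Res^G_H$ carries $\AcatGR$ into $\Acat{H}{\CR}$ and $\Ind_H^G$ carries $\Acat{H}{\CR}$ into $\AcatGR$, as one sees by simply applying the functor to an $m$-free permutation resolution $P\to X$ (resp.\ $Q\to Y$). Being additive, these functors preserve direct summands, hence descend to the idempotent-completions.

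For part~\eqref{it:Res} this immediately gives what we want. For the forward direction of part~\eqref{it:Ind}, the same argument with $Q\to Y$ in place of $P\to X$ shows $\Ind_H^G Y\in\AcatGR^\natural$ whenever $Y\in\Acat{H}{\CR}^\natural$. For the converse direction of~\eqref{it:Ind}, suppose $\Ind_H^G Y\in\AcatGR^\natural$; by part~\eqref{it:Res} already established, $\Res^G_H\Ind_H^G Y$ belongs to $\Acat{H}{\CR}^\natural$; and by~\eqref{eq:Ind-Res-separable}, the unit $\eta^\ell\colon Y\to\Res^G_H\Ind_H^G Y$ is a split monomorphism (with retraction~$\eps^r$), so $Y$ is a direct summand of an object of $\Acat{H}{\CR}^\natural$. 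Since idempotent-completion is closed under summands, $Y\in\Acat{H}{\CR}^\natural$.

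Part~\eqref{it:Res-[G:H]-inv} is then essentially automatic from the two previous parts combined with~\eqref{eq:Ind-Res-cohomological}. Given $\Res^G_H X\in\Acat{H}{\CR}^\natural$, part~\eqref{it:Ind} tells us $\Ind_H^G\Res^G_H X\in\AcatGR^\natural$. The composite $X\xto{\eta^r}\Ind_H^G\Res^G_H X\xto{\eps^\ell}X$ equals multiplication by~$[G:H]$; since this is invertible on~$X$ by hypothesis, $X$ is a direct summand of $\Ind_H^G\Res^G_H X$ in $\Db(\CRG)$, and therefore $X\in\AcatGR^\natural$ by closure under summands. The only subtlety in the whole argument is the bookkeeping in part~\eqref{it:Ind}'s converse, where one has to be careful to invoke part~\eqref{it:Res} rather than try to reverse $\Ind_H^G$ directly; everything else is formal once the preservation properties of $\Ind$ and $\Res$ on permutation modules and on freeness are recorded.
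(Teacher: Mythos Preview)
Your argument is correct and follows precisely the approach the paper sketches: exactness of $\Res$ and $\Ind$ together with their preservation of permutation and free modules handles the direct implications, while~\eqref{eq:Ind-Res-separable} gives the `if' direction of~\eqref{it:Ind} and~\eqref{eq:Ind-Res-cohomological} gives~\eqref{it:Res-[G:H]-inv} via the splitting you describe. One tiny wording quibble: the $H$-set underlying $\Res^G_H\CR(G/K)$ is $G/K$ itself (with $H$-orbits indexed by $H\backslash G/K$), not literally the double-coset set, but this does not affect the argument.
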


\begin{proof}
Direct from $\Res^G_H$ and $\Ind_H^G$ being exact and preserving permutation modules and free modules, using~\eqref{eq:Ind-Res-separable} in the `if' part of~\eqref{it:Ind} and using~\eqref{eq:Ind-Res-cohomological} in~\eqref{it:Res-[G:H]-inv}.
\end{proof}

\begin{Cor}
\label{Cor:Acat-Sylow}%
The following are equivalent for $X\in\Db(\CRG)$:
\begin{enumerate}[\rm(i)]
\item
\label{it:AS.Acat}%
The complex $X$ belongs to~$\AcatGR^\natural$.
\item
\label{it:AS.Sylow}%
Its restriction $\Res_H^G X$ belongs to~$\Acat{H}{\CR}^\natural$ for every Sylow subgroup $H\le G$.
\end{enumerate}
\end{Cor}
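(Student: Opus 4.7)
The implication \eqref{it:AS.Acat}$\Rightarrow$\eqref{it:AS.Sylow} is immediate from \Cref{Prop:Ind-Res}\eqref{it:Res}, applied to each Sylow subgroup in turn. The content is the converse.

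For \eqref{it:AS.Sylow}$\Rightarrow$\eqref{it:AS.Acat}, I would \emph{not} try to invoke \Cref{Prop:Ind-Res}\eqref{it:Res-[G:H]-inv} directly, because the indices $[G:H]$ of the various Sylows need not be invertible in the general ring~$\CR$. Instead, the key is a Bezout-type argument that assembles all Sylows at once. For each prime~$p$ dividing $|G|$, pick a $p$-Sylow $H_p\le G$. The indices $\{[G:H_p]\}_{p \mid |G|}$ share no common prime factor (since $[G:H_p]$ is coprime to~$p$), so there exist integers $a_p \in\bbZ$ with $\sum_{p}a_p\,[G:H_p]=1$.

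Given hypothesis~\eqref{it:AS.Sylow}, \Cref{Prop:Ind-Res}\eqref{it:Ind} yields $\Ind_{H_p}^G\Res^G_{H_p}X\in\AcatGR^\natural$ for every such~$p$; hence the finite direct sum
\[
Z:=\bigoplus_{p\mid |G|}\Ind_{H_p}^G\Res^G_{H_p}X
\]
also lies in $\AcatGR^\natural$. On the other hand, the formula~\eqref{eq:Ind-Res-cohomological} provides, for each~$p$, a map $\eta_p^r\colon X\to \Ind_{H_p}^G\Res^G_{H_p}X$ and a map $\eps_p^\ell$ in the reverse direction, whose composite is multiplication by~$[G:H_p]$ on~$X$. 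Packaging the $\eta_p^r$ into a single map $\alpha\colon X\to Z$ and the scaled counits $a_p\cdot\eps_p^\ell$ into a single map $\beta\colon Z\to X$, we obtain
\[
\beta\circ\alpha \;=\; \sum_{p}a_p\,[G:H_p]\cdot\id_X \;=\;\id_X.
\]

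Thus $X$ is a direct summand of~$Z$ in $\Db(\CRG)$. Since $\AcatGR^\natural$ is the idempotent-completion of a triangulated subcategory inside the idempotent-complete category $\Db(\CRG)$, it is thick (\Cref{Rec:dense}), so closed under direct summands. Therefore $X\in\AcatGR^\natural$, as desired. The only subtle point is the Bezout step; once one replaces the naive `divide by the index' approach by a simultaneous combination over all Sylows, the rest is formal.
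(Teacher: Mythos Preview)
Your proof is correct and follows essentially the same approach as the paper: both use the Bezout identity $\sum_p a_p[G:H_p]=1$ to exhibit $X$ as a retract of $\bigoplus_p \Ind_{H_p}^G\Res^G_{H_p}X$, then conclude via \Cref{Prop:Ind-Res}\eqref{it:Ind} and idempotent-completeness. The only cosmetic difference is that the paper scales the units~$\eta^r$ by~$a_p$ whereas you scale the counits~$\eps^\ell$; this is immaterial.
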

\begin{proof}
The implication \eqref{it:AS.Acat}$\Rightarrow$\eqref{it:AS.Sylow} is immediate from \Cref{Prop:Ind-Res}\,\eqref{it:Res}.
For the converse, choose a $p$-Sylow subgroup $H_p$ of $G$ for each prime $p$ dividing the order of~$G$.
The integers $\{[G:H_p]\mid p\textrm{ divides }|G|\}$ are coprime hence there exist integers $a_p$ with $\sum_p a_p[G:H_p]=1$.
It follows that the sum of the modified composites~\eqref{eq:Ind-Res-cohomological}
\[
X\xto{(a_p\cdot\eta^r)_p}\bigoplus_{p\textrm{ divides }|G|}\Ind^G_{H_p}\Res^G_{H_p}X\xto{(\epsilon^r)_p}X
\]
is the identity. In particular, $X$ is a direct summand of the sum of induced complexes in the middle.
The claim now follows from \Cref{Prop:Ind-Res}\,\eqref{it:Ind} and the fact that $\AcatGR^\natural$ is idempotent complete.
\end{proof}
\begin{Def}
\label{Def:R-perfect}%
We say that a complex $X\in\Db(\CRG)$ is \emph{$\CR$-perfect} if the underlying complex $\Res^G_1(X)$ is perfect over~$\CR$. This defines a thick subcategory of~$\Db(\CRG)$
\[
\DRperf(\CRG):=\SET{X\in\Db(\CRG)}{\Res^G_1(X)\in\Dperf(\CR)}.
\]
\end{Def}

\begin{Cor}
\label{Cor:A(G)-R-perfect}%
We have $\AcatGR^\natural\subseteq\DRperf(\CRG)$. If the order~$|G|$ is invertible in~$\CR$, then $\AcatGR^\natural=\DRperf(\CRG)$.
\end{Cor}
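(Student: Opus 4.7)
The proof is a direct combination of \Cref{Prop:Ind-Res} and \Cref{Exa:G=1}, applied to the trivial subgroup $H=1$. No new ingredients should be required.

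For the inclusion $\AcatGR^\natural\subseteq\DRperf(\CRG)$, I would take any $X\in\AcatGR^\natural$ and apply \Cref{Prop:Ind-Res}\,\eqref{it:Res} with $H=1$ to conclude that $\Res^G_1 X$ lies in $\Acat{1}{\CR}^\natural$. By \Cref{Exa:G=1}, the latter category is precisely $\Dperf(\CR)$, so $X$ is $\CR$-perfect by the definition (\Cref{Def:R-perfect}).

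For the reverse inclusion under the assumption that $|G|$ is invertible in $\CR$, I would take any $X\in\DRperf(\CRG)$, so that $\Res^G_1 X\in\Dperf(\CR)=\Acat{1}{\CR}^\natural$. Since $[G:1]=|G|$ is invertible in $\CR$, \Cref{Prop:Ind-Res}\,\eqref{it:Res-[G:H]-inv} with $H=1$ immediately yields $X\in\AcatGR^\natural$.

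There is no real obstacle: the machinery of \Cref{Prop:Ind-Res} has already been set up precisely to accommodate such restriction-based characterizations, and the base case $G=1$ has been recorded as \Cref{Exa:G=1}. The only subtle point worth spelling out is that the hypothesis of \Cref{Prop:Ind-Res}\,\eqref{it:Res-[G:H]-inv} requires $[G:H]$ invertible only \emph{on $X$}, which is automatic here because it is invertible already in the ring of scalars~$\CR$.
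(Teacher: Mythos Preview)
Your proof is correct and matches the paper's approach exactly: the paper's proof reads ``Direct from \Cref{Prop:Ind-Res} for $H=1$ and from \Cref{Exa:G=1},'' which is precisely what you have spelled out.
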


\begin{proof}
Direct from \Cref{Prop:Ind-Res} for $H=1$ and from \Cref{Exa:G=1}.
\end{proof}

In view of \Cref{Cor:A(G)-R-perfect}, it is interesting to see what happens when we localize~$\CR$.
\begin{Lem}
\label{lift-cpx-permutation}%
Let $r\in\CR$ and set $\CR'=\CR[1/r]$. The canonical functor $\CR'\otimes_\CR-\colon$ $\Cb(\perm(G;\CR))\to\Cb(\perm(G;\CR'))$ is essentially surjective.
\end{Lem}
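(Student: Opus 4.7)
The plan is to lift the complex $P'$ term-by-term and differential-by-differential from $\CR'$ to $\CR$, absorb the failure of the lifted composites to vanish by rescaling each differential by a single large power of~$r$, and then observe that this rescaling disappears upstairs via an explicit chain isomorphism using invertibility of~$r$ in~$\CR'$.

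More precisely, given a bounded complex $(P',d')\in\Cb(\perm(G;\CR'))$, I will write each (nonzero) term as $P'_i=\CR'(A_i)$ for a finite $G$-set~$A_i$ and set $P_i:=\CR(A_i)\in\perm(G;\CR)$, so that $\CR'\otimes_\CR P_i\cong P'_i$ canonically. For any finite $G$-sets~$A,B$, the $\CR$-module $\Hom_{\CRG}(\CR(A),\CR(B))$ is free (on the $G$-orbits of $A\times B$), and localizing yields $\Hom_{\CR'G}(\CR'(A),\CR'(B))\cong\CR'\otimes_\CR\Hom_{\CRG}(\CR(A),\CR(B))$. Since $P'$ is bounded, only finitely many differentials need to be lifted, so a common exponent~$N\geq0$ suffices and I can pick lifts $\tilde d_i\in\Hom_{\CRG}(P_i,P_{i-1})$ with $\tilde d_i/r^N=d'_i$.

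Each composite $\tilde d_i\tilde d_{i+1}$ vanishes after inverting~$r$, hence is annihilated by some power of~$r$; boundedness lets me choose a single $M\geq0$ with $r^M\tilde d_i\tilde d_{i+1}=0$ for every~$i$. I will then define $d_i:=r^M\tilde d_i$; a computation gives $d_id_{i+1}=r^M(r^M\tilde d_i\tilde d_{i+1})=0$, so $(P,d)$ is a genuine complex in~$\Cb(\perm(G;\CR))$. Tensoring with~$\CR'$ yields the complex $(P'_i,r^{M+N}d'_i)$, which is chain-isomorphic to~$P'$ via the $\CR'G$-linear automorphism given on $P'_i$ by multiplication by $r^{(M+N)i}$, a unit in~$\CR'$; one checks the chain condition by the telescoping identity $r^{(M+N)i}=r^{M+N}\cdot r^{(M+N)(i-1)}$.

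The crux, and the only real obstacle, is reconciling a bona fide complex over~$\CR$ (with $d^2=0$ on the nose, not merely up to $r$-torsion) with a prescribed localization over~$\CR'$. The scaling by $r^M$ is what forces $d^2=0$ downstairs, and it is harmless upstairs precisely because $r$ has become a unit. Note that no hypothesis on $r$ (regularity, non-zero-divisor, etc.) is needed: the argument tolerates $r$-torsion in~$\CR$ automatically, since it never requires injectivity of localization on $\Hom$-modules.
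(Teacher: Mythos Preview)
Your proof is correct and follows essentially the same approach as the paper: lift the terms $P'_i=\CR'(A_i)$ to $P_i=\CR(A_i)$, scale the differentials by a suitable power of~$r$ so that they are defined over~$\CR$ and square to zero, and undo the scaling over~$\CR'$ by the chain automorphism given by multiplication by~$r^{(M+N)i}$ in degree~$i$. The paper packages your two exponents~$N$ and~$M$ into a single sufficiently large~$N$, but the idea is identical.
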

\begin{proof}
Pick~$P'\in\Cb(\perm(G;\CR'))$. We can assume $P_{i}=0$ unless~$0\le i\le n$. Each $P'_i=\CR'(A_i)$ for some finite $G$-set~$A_i$ clearly comes from~$\CR(A_i)$ over~$\CR$. For every integer $N\ge 1$ consider the following construction (note the changing powers of~$r$, vertically):
\[
\xymatrix@R=1.8em@C=2em{
P(N):= \ar@<-.5em>[d]^-{s(N):=}
& \cdots 0 \ar[r]
& P'_{n} \ar[r]^-{\partial_{n}} \ar[d]^-{r^{nN}}
& P'_{n-1} \ar[r]^-{\partial_{n-1}} \ar[d]^-{r^{(n-1)N}}
& \cdots \ar[r]^-{\partial_{2}}
& P'_{1} \ar[r]^-{\partial_{1}} \ar[d]^-{r^{N}}
& P'_{0} \ar[r] \ar[d]^-{1}
& 0 \cdots
\\
P'=
& \cdots 0 \ar[r]
& P'_{n} \ar[r]_-{\partial'_{n}}
& P'_{n-1} \ar[r]_-{\partial'_{n-1}}
& \cdots \ar[r]_-{\partial'_{2}}
& P'_{1} \ar[r]_-{\partial'_{1}}
& P'_{0} \ar[r]
& 0 \cdots
}
\]
where $\partial_{i}:=r^N\partial'_{i}$. This morphism is an isomorphism~$s(N)\colon P(N)\isoto P'$ in~$\Cb(\perm(G;\CR'))$. Increasing $N\gg1$, one easily arranges that the maps~$\partial_{i}$ in~$P(N)$ are defined over~$\CR$, that they are $\CRG$-linear and finally that they form a complex, for all these properties only involve finitely many denominators. Then $P(N)$ provides a source of~$P'$.
\end{proof}

\begin{Prop}
\label{Prop:inverting-p}%
Let $r\in \CR$ and set $\CR'=\CR[1/r]$. Let $X\in\Db(\CRG)$ be such that $\CR'\otimes_{\CR}X\in\Db(\CR'G)$ belongs to~$\Acat{G}{\CR'}$.
Then there exists an exact triangle $P\to X \to T\to\Sigma P$ in $\Db(\CRG)$
where $P\in\Cb(\perm(G;\CR))$ and $T\in \Db^{r\tors}(\CRG)$ is $r$-torsion, meaning that $r^n\!\cdot \id_{T}=0$ in~$\Db(\CRG)$ for $n\gg1$ large enough.
\end{Prop}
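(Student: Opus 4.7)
The plan is to lift a permutation resolution of $\CR'\otimes_\CR X$ to an honest map $P\to X$ over~$\CRG$ and then show that its cone is $r$-torsion.

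By hypothesis $\CR'\otimes_\CR X\in\Acat{G}{\CR'}$, so there exists a permutation resolution $f'\colon P'\to \CR'\otimes_\CR X$ in $\Cb(\CR'G)$ with $P'\in\Cb(\perm(G;\CR'))$. Apply \Cref{lift-cpx-permutation} to lift the underlying complex to some $P\in\Cb(\perm(G;\CR))$ with a chosen isomorphism $\CR'\otimes_\CR P\cong P'$. The goal is to construct an honest chain map $g\colon P\to X$ in $\Cb(\CRG)$ with $\CR'\otimes_\CR g=r^N\cdot f'$ for some $N\ge 0$; then $T:=\cone(g)\in\Db(\CRG)$ yields the required triangle $P\to X\to T\to\Sigma P$, and it remains to verify that $T$ is $r$-torsion.

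To build~$g$ I would clear denominators componentwise. Writing $P_i\simeq \bigoplus_j\CR(G/H_{ij})$, the functor $\Hom_{\CRG}(P_i,-)$ is a finite sum of fixed-point functors $(-)^{H_{ij}}$, which commute with the flat localization $\CR\to\CR'$ (as finite limits). Hence $\Hom_{\CR'G}(\CR'\otimes P_i,\CR'\otimes X_i)\cong\CR'\otimes_\CR\Hom_{\CRG}(P_i,X_i)$, so each component $f'_i$ is of the form $r^{-N_i}g_i$ for some $g_i\colon P_i\to X_i$ and~$N_i\ge 0$. Since $P$ is bounded we can pick a uniform $N\ge N_i$, obtaining $\CRG$-linear maps $g_i$ with $\CR'\otimes g_i=r^N f'_i$. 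The defects $h_i:=d_X g_i-g_{i-1}d_P$ vanish after $\CR'\otimes$ (because $r^N f'$ is a chain map), so each $h_i$ is $r$-torsion, and by boundedness again a single power $r^M$ kills every $h_i$. Replacing each $g_i$ by $r^M g_i$ yields a genuine chain map $g\colon P\to X$ with $\CR'\otimes g=r^{N+M}f'$, which remains a quasi-isomorphism since $r^{N+M}$ is invertible in~$\CR'$.

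With $T:=\cone(g)$, the vanishing of $\CR'\otimes_\CR T$ in $\Db(\CR'G)$ says that every $H^i(T)$ is a finitely generated $r$-torsion $\CR$-module; a common power~$r^a$ thus annihilates all of the finitely many nonzero $H^i(T)$. A standard triangle argument now upgrades this to an $r$-torsion statement on~$T$ itself: in any triangle $X'\to Y'\xto{p'} Z'\to\Sigma X'$ with $r^u\cdot\id_{X'}=0$ and $r^v\cdot\id_{Z'}=0$, the composition $p'\circ r^v=r^v\circ p'$ vanishes, so $r^v\cdot\id_{Y'}$ factors through $X'\to Y'$, forcing $r^{u+v}\cdot\id_{Y'}=0$. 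Applied inductively to the cohomological truncations of~$T$, this gives $r^n\cdot\id_T=0$ in~$\Db(\CRG)$ for some~$n\gg 1$, so $T\in\Db^{r\tors}(\CRG)$ as required.

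The main obstacle is the construction of the honest chain map~$g$: termwise clearing of denominators is straightforward, but arranging compatibility with the differentials and preservation of the quasi-isomorphism property after localization requires two successive uniform multiplications by powers of~$r$, both of which depend crucially on $P$ being bounded.
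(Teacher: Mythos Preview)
Your argument is correct. Both your proof and the paper's begin identically, invoking \Cref{lift-cpx-permutation} to lift $P'$ to some $P\in\Cb(\perm(G;\CR))$. From there the paper takes a shortcut: it quotes Keller's localization sequence $\Db^{r\tors}(\CRG)\hookrightarrow\Db(\CRG)\twoheadrightarrow\Db(\CR'G)$ to identify $\Db(\CR'G)$ with $\Db(\CRG)[1/r]$, so that the isomorphism $\CR'\otimes_\CR P\isoto\CR'\otimes_\CR X$ is automatically represented by a fraction $P\xleftarrow{r^n}P\xto{f}X$ in $\Db(\CRG)$ whose cone lies in the kernel $\Db^{r\tors}(\CRG)$. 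You instead do everything by hand: you exploit the explicit description $\Hom_{\CRG}(\CR(G/H),-)\cong(-)^H$ to clear denominators termwise, then multiply by a further power of~$r$ to kill the chain-map defects, and finally deduce $r^n\cdot\id_T=0$ from torsion on cohomology via the standard two-out-of-three argument on truncations. The paper's route is shorter but depends on an external reference; yours is longer but entirely self-contained and yields an honest chain map rather than merely a derived-category morphism. Both are valid, and your explicit torsion argument in fact spells out a step the paper leaves implicit in its identification of $\Db^{r\tors}$ with the kernel of localization.
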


\begin{proof}
By \Cref{lift-cpx-permutation}, there exists $P\in\Cb(\perm(G;\CR))$ and an isomorphism $\CR'\otimes_{\CR}P\isoto \CR'\otimes_{\CR}X$ in~$\Db(\CR'G)$.
By the usual localization sequence
\[
\xymatrix@C=1.5em{
\Db^{r\tors}(\CRG)\ \ar@{>->}[rr]^-{\incl}
&& \Db(\CRG) \ar@{->>}[rrr]^-{\CR'\otimes_{\CR}-}
&&& \Db(\CR'G)
}
\]
(see Keller~\cite[Lemma~1.15]{keller:cyclic-exact}), we have $\Db(\CR'G)=\Db(\CRG)[1/r]$.
So the isomorphism $\CR'\otimes_{\CR}P\isoto \CR'\otimes_{\CR}X$ is given by a fraction $P\xfrom{r^n}P\xto{f}X$ in $\Db(\CRG)$ for some $n\gg1$ and some $f\colon P\to X$ such that $T:=\cone(f)$ belongs to~$\Db^{r\tors}(\CRG)$.
\end{proof}

\begin{Cor}
\label{Cor:inverting-p}%
Let $G$ be a $p$-group for some prime~$p$ and let $X\in\Db(\CRG)$ be $\CR$-perfect (\Cref{Def:R-perfect}).
Then there exists an exact triangle in $\Db(\CRG)$
\[
P\to X\oplus\Sigma X \to T\to\Sigma P
\]
where $P$ belongs to~$\Cb(\perm(G;\CR))$ and $T$ is $p$-torsion in~$\Db(\CRG)$.
\end{Cor}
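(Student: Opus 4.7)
The plan is to apply \Cref{Prop:inverting-p} with $r=p$ to the $\CR$-perfect complex $X\oplus\Sigma X$. The only thing to check is the hypothesis that $\CR'\otimes_\CR(X\oplus\Sigma X)$ belongs to $\Acat{G}{\CR'}$ (not just its idempotent-completion), where $\CR'=\CR[1/p]$.

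First, I would observe that $X\oplus\Sigma X$ is again $\CR$-perfect, since $\DRperf(\CRG)$ is a thick subcategory. Hence $\CR'\otimes_\CR(X\oplus\Sigma X)$ is $\CR'$-perfect as well. Because $G$ is a $p$-group and $p$ is invertible in $\CR'$, the order $|G|$ is invertible in $\CR'$, and therefore \Cref{Cor:A(G)-R-perfect} yields $\Acat{G}{\CR'}^\natural=\DRperf[\CR'](\CR'G)$. In particular, $\CR'\otimes_\CR X$ lies in $\Acat{G}{\CR'}^\natural$.

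Next, by the characterization of the thick closure in \Cref{Rec:dense}, membership of $Z$ in $\Acat{G}{\CR'}^\natural$ is equivalent to $Z\oplus\Sigma Z\in\Acat{G}{\CR'}$. Applying this to $Z=\CR'\otimes_\CR X$ shows that $\CR'\otimes_\CR(X\oplus\Sigma X)\cong(\CR'\otimes_\CR X)\oplus\Sigma(\CR'\otimes_\CR X)$ belongs to $\Acat{G}{\CR'}$ itself. The hypothesis of \Cref{Prop:inverting-p} is then satisfied for $X\oplus\Sigma X$, and invoking that proposition produces an exact triangle $P\to X\oplus\Sigma X\to T\to\Sigma P$ in $\Db(\CRG)$ with $P\in\Cb(\perm(G;\CR))$ and $T$ a $p$-torsion object.

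There is no real obstacle here; the whole argument is a matter of verifying the hypothesis of \Cref{Prop:inverting-p}. The small subtlety worth flagging is precisely why the statement is phrased with $X\oplus\Sigma X$ rather than $X$: the equality in \Cref{Cor:A(G)-R-perfect} holds only after idempotent completion, so $\CR'\otimes_\CR X$ is \emph{a priori} only in $\Acat{G}{\CR'}^\natural$, and one passes to $X\oplus\Sigma X$ to land inside $\Acat{G}{\CR'}$ proper before applying \Cref{Prop:inverting-p}.
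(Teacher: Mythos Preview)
Your proof is correct and follows essentially the same route as the paper's own proof: apply \Cref{Prop:inverting-p} with $r=p$ after using \Cref{Cor:A(G)-R-perfect} (which applies since $|G|$ is a power of~$p$, invertible in~$\CR'$) to place $\CR'\otimes_\CR X$ in $\Acat{G}{\CR'}^\natural$, and then pass to $X\oplus\Sigma X$ via \Cref{Rec:dense} to land in $\Acat{G}{\CR'}$ proper. Your added explanation of why the statement uses $X\oplus\Sigma X$ rather than~$X$ is exactly the point.
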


\begin{proof}
We apply \Cref{Prop:inverting-p} for $r=p$, so $\CR'=\CR[1/p]$. It is easy to check that $X':=\CR'\otimes_{\CR}X\in\Db(\CR'G)$ remains $\CR'$-perfect. By \Cref{Cor:A(G)-R-perfect}, we have $X'\in\Acat{G}{\CR'}^\natural$ hence $X'\oplus\Sigma X'\in\Acat{G}{\CR'}$. We conclude by \Cref{Prop:inverting-p}.
\end{proof}

\begin{Rem}
\label{Rem:torsion}%
If $X$ is $p$-torsion, say $p^n\cdot\id_X=0$, the octahedron axiom gives
\[
X\in\thick(X\oplus\Sigma X)=\thick(\cone(X\xto{p^n}X))\subseteq\thick(\cone(X\xto{p}X)).
\]
\end{Rem}


\section{Permutation resolutions of the trivial module}
\label{sec:1-resol}%


The goal of this section is to prove:
\begin{Thm}
\label{Thm:1-resol}%
Let $\CR$ be a commutative ring and $G$ be a $p$-group for some prime~$p$. Then there exists a bounded acyclic complex of permutation~$\CRG$-modules
\begin{equation}
\label{eq:1-resol}%
C_\sbull=\cdots 0 \to C_n\to C_{n-1}\to \cdots \to C_1\to C_0 \to 0\cdots
\end{equation}
such that $C_0=\CR$ (with trivial $G$-action) and $C_1$ is a free $\CRG$-module.
\end{Thm}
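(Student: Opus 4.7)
The plan is to proceed by induction on $|G|$. The base case $G = 1$ is trivial: take $0 \to \CR \xrightarrow{\id} \CR \to 0$, where $C_0=C_1=\CR$ is the free module of rank $1$ over $\CR\cdot 1=\CR$. For $|G| > 1$, since $G$ is a nontrivial $p$-group its center is nontrivial, so I would pick $z \in Z(G)$ of order $p$ and set $N = \langle z \rangle$, $H = G/N$, and $\sigma = 1 + z + \cdots + z^{p-1} \in \CR G$, noting that $\sigma(z-1) = (z-1)\sigma = z^p - 1 = 0$. The inductive hypothesis applied to the smaller $p$-group $H$ supplies a bounded acyclic complex $\bar C_\bullet$ of permutation $\CR H$-modules with $\bar C_0 = \CR$ and $\bar C_1$ free over $\CR H$.

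The main building block of the construction is the $4$-term acyclic complex of $\CR G$-modules
\begin{equation*}
\eta\colon\ 0 \to \CR(G/N) \xrightarrow{\,\sigma\,} \CR G \xrightarrow{\,z-1\,} \CR G \xrightarrow{\,\pi\,} \CR(G/N) \to 0,
\end{equation*}
obtained by applying the exact functor $\Ind_N^G$ to the standard length-$3$ resolution $0 \to \CR \to \CR N \to \CR N \to \CR \to 0$ of the trivial $\CR N$-module. Its middle two terms are the free $\CR G$-module, and its end terms are the permutation module $\CR(G/N) = \Infl_H^G(\CR)$---which is exactly what appears in degree $1$ after inflating $\bar C_\bullet$ along $G \twoheadrightarrow H$.

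I would then combine $\Infl_H^G \bar C_\bullet$ (an acyclic bounded permutation complex with $C_0 = \CR$, but with $C_1 = \CR(G/N)^{n_1}$ only permutation, not free) with copies of $\eta$ to \emph{upgrade} the degree-$1$ term. Concretely, replace the degree-$1$ term by the free $(\CR G)^{n_1}$ via the surjection $\pi$, and splice in the remaining free module $(\CR G)^{n_1}$ and permutation module $\CR(G/N)^{n_1}$ of $\eta^{\oplus n_1}$ as additional terms further to the left, suitably modifying the differentials out of $\Infl_H^G \bar C_{\geq 2}$. For $p$ odd, all the signs in this Koszul-type construction fit together cleanly and one obtains an explicit Koszul complex of genuine permutation modules: this is the ``explicit Koszul complex'' alluded to in the introduction.

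For $p = 2$ with $2 \neq 0$ in $\CR$, however, the analogous formulas produce terms involving the sign character $\pm 1$, yielding signed permutation modules rather than genuine permutation modules. Eliminating these signs will be the main obstacle. As the paper hints, I would address this via a more delicate induction on $|G|$ with ad hoc tricks: I expect the workaround exploits the identity $z^2 = 1$ (so $(z+1)(z-1) = 0$) to symmetrize the offending differentials, or else uses a refined central series of the $2$-group to absorb the signs into honest $G$-actions. The technical challenge is to arrange the construction so that every term is a genuine permutation $\CR G$-module while preserving acyclicity, $C_0 = \CR$, and $C_1$ free.
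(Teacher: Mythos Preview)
Your construction has a genuine gap at the ``suitably modifying the differentials'' step, and the gap is not a matter of signs. After inflation, every term of $\Infl_H^G\bar C_\bullet$ has trivial $N$-action. To ``splice'' as you describe, you must lift the differential $d_2\colon \Infl_H^G\bar C_2 \to \CR(G/N)^{n_1}$ to an $\CR G$-linear map $\tilde d_2\colon \Infl_H^G\bar C_2 \to (\CR G)^{n_1}$ with $\pi\circ\tilde d_2=d_2$. But any $\CR G$-map from a module with trivial $N$-action into $\CR G$ lands in the $N$-invariants $(\CR G)^N=\sigma\cdot\CR G$, and $\pi(\sigma\cdot x)=p\cdot\pi(x)$. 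Hence such a lift exists only when $d_2$ factors through multiplication by~$p$. Already for $G=C_p\times C_p$, $N=\langle a\rangle$, $R=\bbZ$, with $\bar C_\bullet$ the standard length-$3$ resolution for $H=C_p=\langle b\rangle$, one has $d_2=(b-1)$ on $\CR(G/N)$, and $b-1\notin p\cdot\CR(G/N)$. So no $\CR G$-linear lift exists and the proposed complex cannot be made exact at degree~$1$ with those terms. The obstruction is thus present for \emph{all} primes~$p$ (whenever $p\notin\CR^\times$), not just $p=2$; your diagnosis that ``for $p$ odd all the signs fit together cleanly'' is based on a construction that does not exist. Indeed, your recipe involves no tensor products of complexes, hence no Koszul signs at all---so there is nothing for the parity of~$p$ to influence.

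The paper's argument proceeds quite differently. It never passes to a quotient $G/N$; instead, for odd~$p$ it writes down directly the Koszul complex $\Kos(G;\CR)=\Lambda^\bullet_{\CR}(\CR G)$ as the tensor-induction $(0\to\CR\to\CR\to 0)^{\otimes G/1}$ from the trivial subgroup (\Cref{Exa:tens-ind-H=1}), observes that its terms are \emph{sign}-permutation modules, and then proves (\Cref{Lem:sign-perm-odd}) that for an odd $p$-group every sign-permutation module is already a permutation module. For $p=2$ the paper does use induction on $|G|$, but via an index-$2$ \emph{subgroup} $H\lhd G$ and \emph{tensor-induction} $D^{\otimes G/H}$ (not inflation from a quotient); this is where the Koszul signs genuinely enter, and they are removed by a descending induction using the twist by the sign character $L=\Infl_{G/H}^G(\CR_{\sgn})$ and \Cref{Lem:sign-perm-even}. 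The ``explicit Koszul complex'' alluded to in the introduction is this $\Lambda^\bullet(\CR G)$, not a by-product of your inflation-and-splice procedure.
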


We use tensor-induction; see for instance~\cite[\S\,I.3.15 and~II.4.1]{benson:representation-cohomology}.
\begin{Rec}
\label{Rec:tens-induction}%
Let $H\normal G$ be a normal subgroup of index~$n$. Choose $g_1,\ldots,g_n\in G$ a complete set of representatives of~$G/H$. Using the bijection $\{1,\ldots,n\}\isoto G/H$, $j\mapsto[g_j]$, the left $G$-action on~$G/H$ yields a group homomorphism $\sigma\colon G\to S_n$ to the symmetric group on~$n$ elements. Consider the action of~$S_n$ on~$H^n$ permuting the factors and the associated semi-direct product~$S_n\ltimes H^n$.
Define an injective group homomorphism
\[
i\colon G\hook S_n\ltimes H^n
\]
by mapping $g\in G$ to $(\sigma g,h_1,\ldots,h_n)$ where $g\cdot g_j=g_{(\sigma g)(j)}\cdot h_j$ for all~$j=1,\ldots,n$.

We write $i^*$ for the restriction of $(S_n\ltimes H^n)$-sets or $\CR(S_n\ltimes H^n)$-modules along~$i$.
The \emph{tensor-induction} of an $\CR H$-module~$N$ is the $\CRG$-module that we denote
\[
N\potimes{G/H}:=i^*(N\potimes{n});
\]
here $N\potimes{n}=N\otimes_\CR\cdots\otimes_\CR N$, with $n=[G:H]$ factors, is acted upon by~$S_n\ltimes H^n$ via permutation of factors for~$S_n$ and factorwise action for~$H^n$, that is, $(h_1,\ldots,h_n)\cdot (x_1\otimes \ldots\otimes x_n)=(h_1\,x_1)\otimes \ldots\otimes (h_n\,x_n)$.
\end{Rec}

\begin{Exa}
\label{Exa:tens-ind-free}%
Consider the $(S_n\ltimes H^n)$-set $A=\sqcup_{j=1}^n H$ on which $S_n$ acts by permuting the summands and~$H^n$ acts via $(h_1,\ldots,h_n)\cdot x=h_j\cdot x$ if~$x$ belongs to the $j$-th summand~$H$ of~$A$. Although $A$ is not free, one can check that $i^*A$ is a free $G$-set. Linearizing, let $N=\CR(A)=\oplus_{i=1}^n \CR H$ with $S_n$ permuting the summands and~$H^n$ acting via $(h_1,\ldots,h_n)\cdot(x_1,\ldots,x_n)=(h_1\,x_1,\ldots,h_n\,x_n)$. Again, $N$ is not necessarily $\CR(S_n\ltimes H^n)$-free but $i^*N$ is $\CRG$-free.

\end{Exa}

\begin{Rec}
\label{Rec:tens-ind-complexes}%
One can perform the above construction in other symmetric monoidal categories instead of $\mmod{\CR}$, since we only need $S_n$ to act on~$N\potimes{n}$. Explicitly, it suffices to know what transpositions do on~$N\potimes{n}$, and this is the swap of factors given in the symmetric monoidal structure. We can then do this for $\otimes_{\CR}$ in $\Cb(\CR)$ for instance. This is straightforward but makes signs appear.

So, with the above notation for $H\lhd G$ of index~$n$, if we take a bounded complex $C\in\Cb(\CR H)$ of $\CR H$-modules then we write
\[
C\potimes{G/H}=i^*(C\potimes{n})
\]
where the action of $S_n$ on~$C\potimes{n}$ involves signs, following the Koszul rule. This $C\potimes{G/H}$ is a bounded complex of~$\CRG$-modules, which in degree~$s$ is
\[
\bigoplus_{{r_1,\ldots,r_n}\atop{r_1+\cdots+r_n=s}}C_{r_1}\otimes_{\CR}\cdots\otimes_{\CR}C_{r_n}
\]
and whose $G$-action is restricted via $i\colon G\hook S_n\ltimes H^n$ from the `obvious' action of $S_n\ltimes H^n$ as above. For instance, a transposition $(k\ell)$ will swap the factors $C_{r_k}$ and $C_{r_\ell}$ and leave the others unchanged (typically landing in another term of the above direct sum), multiplied by a sign if both $r_k$ and~$r_\ell$ are odd.
\end{Rec}

The above construction motivates the following definition.
\begin{Def}
\label{Def:sign-perm}%
An $\CRG$-module $M$ is called \emph{sign-permutation} if it admits a \emph{sign-permutation basis}, that is,
an $\CR$-basis $A$ such that $G\cdot A\subseteq A\cup (-A)$.
\end{Def}

\begin{Exa}
\label{Exa:tens-ind-H=1}%
Consider the case $H=1$, so $n=|G|$. Then $C\potimes{G/1}$ is $C\potimes{n}$ where $G$ acts by `permuting the indices', with Koszul rule. This depends on an ordering of the elements of~$G$. Let us take a special case where \mbox{$C=(0\to \CR\xto{1}\CR\to 0)$} concentrated in degrees one and zero. Then direct inspection shows that $C\potimes{G/1}$ is the Koszul complex $\Kos(G;\CR)$ associated with the augmentation morphism $\epsilon:\CR G\to \CR$ considered as a morphism of $\CR$-modules $\CR^{n}\to\CR$ which is the identity on each summand.
Explicitly, $\Kos(G;\CR)$ is concentrated in degrees $n,\ldots,0$, and in degree $s$ is the exterior power $\Lambda^s_{\CR}(\CRG)$ with action given by \mbox{$g\cdot(v_1\wedge\cdots\wedge v_s)=$}
\mbox{$(g\,v_1)\wedge\cdots\wedge(g\,v_s)$}.
Note that $\Kos(G;\CR)_s$ has the standard $\CR$-basis given by~$A_s:=\SET{g_{i_1}\wedge\cdots\wedge g_{i_s}}{1\le i_1< \cdots<i_s\le n}$ in the chosen numbering $g_1,\ldots,g_n$ of the elements of~$G$.
The action of $S_n\ltimes H^n=S_n$ and therefore of~$G$ (via~$i$) essentially permutes this basis~$A_s$, except that it introduces signs.
The differential $\Kos(G;\CR)_s\to\Kos(G;\CR)_{s-1}$ sends a basis element $g_{i_1}\wedge\cdots\wedge g_{i_s}$ to the sum
\[
\sum_{j=1}^s(-1)^{j-1}g_{i_1}\wedge\cdots\wedge \widehat{g_{i_j}}\wedge\cdots\wedge g_{i_s}
\]
where the factor~$\widehat{g_{i_j}}$ is removed.
We conclude that $\Kos(G;\CR)$ is an acyclic complex of sign-permutation modules in the sense of \Cref{Def:sign-perm}.
Note also that $\Kos(G;\CR)_0=\CR$ and $\Kos(G;\CR)_1=\CRG$.
\end{Exa}

The proof of \Cref{Thm:1-resol} will differ depending on $p$ being even or odd.

\begin{Lem}
\label{Lem:sign-perm-odd}%
Let $p$ be an odd prime and $G$ be a $p$-group. Every sign-permutation $\CRG$-module is a permutation module.
\end{Lem}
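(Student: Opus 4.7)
My plan decomposes a sign-permutation basis $A$ of~$M$ into two pieces of data: a genuine permutation action $\pi\colon G\to\mathrm{Sym}(A)$ obtained by forgetting signs, together with a $\{\pm 1\}$-valued $1$-cocycle $\epsilon\colon G\times A\to\{\pm 1\}$ encoding them, characterized by $g\cdot a=\epsilon(g,a)\cdot\pi(g)(a)$ and the cocycle identity $\epsilon(g_1g_2,a)=\epsilon(g_1,\pi(g_2)(a))\cdot\epsilon(g_2,a)$. (This decomposition is unique since one may assume $2\ne 0$ in~$\CR$, the characteristic-$2$ case being tautological because then $-A=A$.) Since the $\pi$-orbits on~$A$ partition~$A$ and each orbit spans a $G$-stable $\CR$-submodule of~$M$, I reduce to the case where $\pi$ is transitive on~$A$, with base point~$a_0\in A$ and stabilizer $H:=\mathrm{Stab}_\pi(a_0)\le G$.

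Restricting the cocycle at~$a_0$ yields a group homomorphism $\chi\colon H\to\{\pm 1\}$ by $\chi(h):=\epsilon(h,a_0)$, the cocycle condition reducing to multiplicativity once $\pi(h)(a_0)=a_0$ is used. Here the hypothesis on~$p$ enters decisively: since $H$ is a $p$-subgroup of~$G$ with $p$ odd, any homomorphism from~$H$ to the group~$\{\pm 1\}$ of order~$2$ must be trivial, so $\chi=1$. Given that, I build a genuine permutation basis by twisting: for each $a\in A$ pick $g_a\in G$ with $\pi(g_a)(a_0)=a$ and set $b_a:=\epsilon(g_a,a_0)\cdot a$. Two choices of~$g_a$ differ by an element of~$H$, and the resulting sign discrepancy equals the corresponding value of~$\chi$, hence is~$1$, so $b_a$ is well-defined.

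A direct cocycle computation using the canonical choice $g_{\pi(g)(a)}:=g\,g_a$ then gives $g\cdot b_a=b_{\pi(g)(a)}$ for all $g\in G$, so $B:=\{b_a\mid a\in A\}$ is a genuine permutation basis of~$M$. I expect no serious obstacle in carrying this out: the sole nontrivial ingredient is the vanishing of~$\chi$, and everything else is bookkeeping with the cocycle identity. This is also exactly the step that collapses for $p=2$, explaining the authors' remark that the even case requires a completely different strategy.
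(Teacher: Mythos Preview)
Your argument is correct. The cocycle bookkeeping goes through exactly as you outline: once $\chi$ vanishes, the element $b_a=\epsilon(g_a,a_0)\,a$ is independent of the coset representative~$g_a$, and the identity $\epsilon(gg_a,a_0)=\epsilon(g,a)\,\epsilon(g_a,a_0)$ gives $g\cdot b_a=b_{\pi(g)(a)}$ on the nose. Since each $b_a$ is $\pm a$, the set $B$ is again an $\CR$-basis.

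The paper argues differently. Rather than decomposing into orbits and working with an explicit cocycle, it observes that a sign-permutation basis of size~$m$ encodes a homomorphism $G\to\Gamma:=S_m\ltimes(C_2)^m\subseteq\Gl_m(\CR)$, and then invokes Sylow: since $[\Gamma:S_m]=2^m$ is prime to~$p$, the image of~$G$ is $\Gamma$-conjugate into~$S_m$, and conjugating by an element of~$\Gamma$ is precisely a reordering-and-sign-flip of the basis. Your approach is more elementary (no Sylow conjugacy needed, only the trivial observation that a $p$-group has no nontrivial character to~$\{\pm1\}$ when $p$ is odd) and yields the new basis explicitly orbit-by-orbit; the paper's approach is shorter and handles all orbits at once, but is less constructive. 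Both isolate the same obstruction for $p=2$: in your language $\chi$ need not vanish, in theirs the $2$-Sylow of~$\Gamma$ need not sit inside~$S_m$.
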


\begin{proof}
If $1=-1$ in~$\CR$ there is nothing to prove. So assume $1\neq -1$. Let $M$ be an $\CRG$-module with sign-permutation basis $A$. Let $m=\dim_\CR(M)=|A|$, so that $M\simeq\CR^m$ as $\CR$-modules. Embed the elementary abelian 2-group $(C_2)^m\hook\Gl_m(\CR)$ as $\{\pm1\}^m$ diagonally and the symmetric group $S_m\hook\Gl_m(\CR)$ as permutation matrices and let $\Gamma=(C_2)^m\cdot S_m\le\Gl_m(\CR)$ the subgroup of matrices that have exactly one $\pm 1$ entry in each row and each column and all other entries zero. One shows easily that $\Gamma=S_m\ltimes (C_2)^m$ and in any case $[\Gamma:S_m]=2^m$. The basis~$A$ yields a group homomorphism $f\colon G\to \Gl_m(\CR)$ that lands inside~$\Gamma$ by hypothesis.

Now $f(G)$ is a $p$-subgroup of~$\Gamma$, hence is contained in a $p$-Sylow. Since $[\Gamma:S_m]=2^m$ is prime to~$p$, the image $f(G)$ is $\Gamma$-conjugate to a subgroup of~$S_m$. This means that, up to reordering and changing some signs in the basis~$A$, we can assume that $G$ acts on~$A$ via the action of~$S_m$ on~$\CR^m$, that is, by permuting the basis.
\end{proof}

\begin{Lem}
\label{Lem:sign-perm-even}%
Let $G$ be a $2$-group and $H\normal G$ be a subgroup of index~$2$. Let $L=\Infl_{G/H}^{G}(\CR_{\sgn})$ be the one-dimensional sign representation~$\CR_{\sgn}$ of the cyclic group~$G/H$ of order~$2$, on which $G$ acts via~$G\onto G/H$.
Let $M$ be an $\CRG$-module with a sign-permutation basis~$A$ (\Cref{Def:sign-perm}) and suppose that $A$ is permuted by~$H$, that is, $H\cdot A\subseteq A$ without signs.
Then there exists a decomposition of~$M$
\[
M=M^+\oplus (L\otimes M^-)
\]
as an $\CRG$-module, where $M^+$ and $M^-$ are permutation $\CRG$-modules.
\end{Lem}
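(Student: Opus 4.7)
\emph{Plan.} First dispose of the trivial case $1=-1$ in $\CR$: here $L\cong\CR$ and the sign-permutation hypothesis reduces to $G\cdot A\subseteq A$, so $M$ is already a permutation module. Assume henceforth $1\ne -1$ in~$\CR$. Fix $g\in G\sminus H$, so that $G=H\sqcup gH$. For each $a\in A$ write uniquely $g\cdot a=\eps(a)\,\tau(a)$ with $\eps(a)\in\{\pm 1\}$ and $\tau(a)\in A$. A direct computation, using $g^2\in H$ (so $g^2\cdot a\in A$ has no sign) and normality of~$H$ (so $ghg\inv\in H$ for every $h\in H$), shows that $\tau(\tau(a))=g^2\cdot a$, that $\eps(\tau(a))=\eps(a)$, and that $\eps(ha)=\eps(a)$ with $\tau(ha)=(ghg\inv)\tau(a)$. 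Hence $\eps$ and~$\tau$ descend to a sign function $\bar\eps\colon B\to\{\pm 1\}$ and an involution $\sigma\colon B\to B$ on the set $B:=A/H$ of $H$-orbits, with $\bar\eps$ constant on each $\sigma$-orbit.

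Next, partition $B=B^+\sqcup B^-$ where $B^-:=\{\alpha\in B\mid\sigma\alpha=\alpha\text{ and }\bar\eps(\alpha)=-1\}$ and $B^+$ is the complement, and let $A^{\pm}:=\bigsqcup_{\alpha\in B^{\pm}}\alpha\subseteq A$. Since $g$ sends each $\alpha$ to $\pm\sigma\alpha$ and $\sigma$ preserves the partition, the submodules $\CR(A^+)$ and $\CR(A^-)$ are both $\CRG$-stable, giving $M=\CR(A^+)\oplus\CR(A^-)$. On $\CR(A^-)$ every basis vector $a$ satisfies $g\cdot a=-\tau(a)$ with $\tau(a)\in Ha$; tensoring with~$L$ annihilates the sign, so $L\otimes\CR(A^-)$ becomes a permutation $\CRG$-module which we declare to be~$M^-$. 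On $\CR(A^+)$, for every size-two $\sigma$-orbit $\{\alpha,\sigma\alpha\}$ on which $\bar\eps=-1$, negate every basis vector lying in $\sigma\alpha$; the resulting basis remains $H$-permuted (since $H$ preserves each $\alpha$ and commutes with scalar $-1$), and the identities above force the $g$-action on it to be an unsigned permutation too. Hence $\CR(A^+)$ becomes a permutation $\CRG$-module $M^+$, producing the desired decomposition $M=M^+\oplus(L\otimes M^-)$.

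I expect the main obstacle to be the descent step: checking that $\tau$ and~$\eps$ are well-defined on $B$ and that $\sigma$ is a sign-respecting involution. Everything reduces to the three identities highlighted above, each a short manipulation exploiting normality of~$H$ together with $g^2\in H$. Once they are in place, the construction of $M^\pm$ becomes a matter of carefully bookkeeping signs on a half of each swapped pair of $H$-orbits.
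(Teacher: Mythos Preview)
Your proof is correct, but it takes a more elaborate route than the paper's. The paper simply partitions $A$ by the sign of $g\cdot a$: set $A^+=\{a\in A\mid g\cdot a\in A\}$ and $A^-=\{a\in A\mid g\cdot a\in -A\}$. A two-line check (using $g^2\in H$ and normality of~$H$) shows both are $G$-stable up to sign, that $M^+:=\CR(A^+)$ is already a permutation $\CRG$-module, and that $M^-:=L\otimes\CR(A^-)$ is permutation as well. No passage to the orbit set $B=A/H$, no involution~$\sigma$, and no basis adjustment is needed.

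Your version refines this: you keep in $M^-$ only those $H$-orbits with $\bar\eps=-1$ that are \emph{fixed} by~$\sigma$, while the size-two $\sigma$-orbits with $\bar\eps=-1$ get absorbed into $M^+$ by flipping the sign on one half of each pair. This produces a genuinely different (finer) decomposition, with a smaller $M^-$ and larger $M^+$, at the cost of the extra descent-and-involution bookkeeping you flagged. For the purposes of the lemma (and its use in the proof of \Cref{Thm:1-resol}) this refinement buys nothing, so the paper's direct split on~$\eps(a)$ is the cleaner argument.
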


\begin{proof}
Again we can assume $1\neq -1$ in~$\CR$. Pick $g\in G\sminus H$. Decompose~$A=A^+\sqcup A^-$ where $A^{+}=\SET{a\in A}{g\cdot a\in A}$ and $A^{-}=\SET{a\in A}{g\cdot a\in -A}$. Both $A^+$ and~$A^-$ are $H$-subsets of~$A$ because of the assumption on the $H$-action and because $H$ is normal in~$G$. Consider the $\CR$-submodules $M^+$ and $N$ generated inside~$M$ by $A^+$ and by~$A^-$ respectively. It is easy to see that $M^+$ is an $\CRG$-submodule. In fact, $N=\CR A^-$ is also an $\CRG$-submodule, because $g^2\in H$ gives us~$g\cdot A^-\subseteq -A^-$. Hence $M=M^+ \oplus N$ as $\CR G$-module and $M^+=\CR(A^+)$ is a permutation $\CRG$-module. The submodule $N=\CR A^-$ is not permutation because $g$, and in fact any element of~$G\sminus H$, acts on~$A^-$ by a permutation of~$A^-$ times~$-1$. Therefore $M^-:=L\otimes N$ is permutation and $M=M^+\oplus N=M^+\oplus (L\otimes M^-)$.
\end{proof}

\begin{proof}[Proof of \Cref{Thm:1-resol}]
The case of $p$ odd is direct from the sign-permutation resolution $\Kos(G;\CR)=(0\to \CR\xto{1}\CR\to 0)\potimes{G/1}$ of \Cref{Exa:tens-ind-H=1} combined with \Cref{Lem:sign-perm-odd}.

So suppose that $p=2$ and proceed by induction on the order of the $2$-group~$G$. By induction hypothesis, we can assume the result for any index-$2$ subgroup~$H\normal G$. Let $D$ be a bounded complex as in~\eqref{eq:1-resol} but for~$H$; so we have $D_0=\CR$ and $D_1$ is $\CR H$-free and all $D_j$ are permutation $\CR H$-modules. Consider now for $m\geq 0$:

{\it Claim}~$(\Delta)_m$: There exists an acyclic complex~$C$ of~$\CRG$-modules concentrated in non-negative degrees such that:
\begin{enumerate}[\rm(1)]
\item
\label{it:sign-even-1}%
$C_0$ is either $\CR$ or~$L=\Infl_{G/H}^G(\CR_{\sgn})$ the sign representation of \Cref{Lem:sign-perm-even},
\item
\label{it:sign-even-2}%
$C_1\neq 0$ is free,
\item
\label{it:sign-even-3}%
each $C_j$ is sign-permutation, and
\item
\label{it:sign-even-4}%
each $C_j$ for $j\ge m$ is permutation.
\end{enumerate}

The claim $(\Delta)_0$ is the theorem. We proceed by descending induction on~$m$. First, let $C=D\potimes{G/H}$ be the tensor-induced of the complex for~$H$ as in \Cref{Rec:tens-ind-complexes}. Note that here $n=2$ and $S_n=C_2\cong G/H$ and we have tacitly chosen representatives, say $1,g$, of~$G/H=\{[1],[g]\}$. Since $C$ is a bounded complex, let $m$ be big enough so that~$C_j=0$ for $j\ge m$.

We will now establish the base of induction for this~$m$. Condition~\eqref{it:sign-even-4} is trivially true. And condition~\eqref{it:sign-even-1} is easy: $C_0=D_0\potimes{n}=\CR\potimes{2}\cong\CR$ and $S_n$ acts trivially as everything comes from even degree (zero). For~\eqref{it:sign-even-2} and~\eqref{it:sign-even-3}, given permutation bases $B_j$ of $D_j$ over~$H$, consider for every degree~$d$ the following $\CR$-basis of~$C_d$:
\[
A_d=\mathop{\sqcup}_{0\leq j\leq d}\,B_j\otimes B_{d-j}\subseteq C_d.
\]
This set is preserved by the (diagonal) action of~$H$. For the $G$-action, we only need to check what $g$ does and it acts via~$[g]\in G/H\cong C_2$ as the swap of factors with Koszul sign rule.
It follows that $A_d$ is a sign-permutation basis for $C_d$.
When $d=1$, the only possible $j$ are $0$ and~$1$ and then one of $j$ or $d-j$ is even, so no sign appears. One can check, as in \Cref{Exa:tens-ind-free}, that $i^*(A_1)$ is a free~$G$-set, that is, $C_1=\CR(A_1)$ is a free~$\CRG$-module.
This establishes the base for induction for $m\gg0$.

We will now prove the induction step $(\Delta)_{m+1}\Rightarrow (\Delta)_m$ with $m\geq 0$. We can inflate from $G/H$ the acyclic complex $0\to \CR \to \CR(G/H) \to \CR_{\sgn}\to 0$ to obtain the following quasi-isomorphism~$s$ of complexes of~$\CRG$-modules:
\[
\xymatrix@R=1.5em{
\ C':= \ar@<-.5em>[d]_-{s}
&& \cdots 0 \ar[r]
& \CR \ar[r] \ar[d]
& \CR(G/H) \ar[r] \ar[d]
& 0\cdots
\\
L=
&& \cdots 0 \ar[r]
& 0 \ar[r]
& L \ar[r]
& 0 \cdots
}
\]
Note that $C'$ is a complex of permutation modules. Let $C$ be a complex as in $(\Delta)_{m+1}$ and consider the sign-permutation module $C_m$ in degree~$m$. By \Cref{Lem:sign-perm-even}, we know that $C_m$ decomposes as $C_m=C_m^+\oplus (L\otimes C_m^-)$, where both $C_m^+$ and $C_m^-$ are permutation modules. (To be picky, in the border case of $m=1$ take $C_1^+=C_1$ and $C_1^-=0$, not some fantasy.) There exists a quasi-isomorphism~$t$ as follows
\[
\xymatrix@C=1.8em@R=1.5em{
\ C'':= \ar@<-.5em>[d]_-{t}
& \cdots \ar[r]
& C_{m+2} \ar[r] \ar[d]
& C_{m+1} \ar[r] \ar[d]
& C_{m}^+ \ar[r] \ar[d]
& 0 \ar[r] \ar[d]
& \cdots
\\
C''':=
& \cdots \ar[r]
& 0 \ar[r]
& L\otimes C_m^- \ar[r]
& C_{m-1} \ar[r]
& C_{m-2} \ar[r]
& \cdots
}
\]
whose cone is our acyclic complex~$C$. Observe how the degree~$m$ part is split between the permutation part (top) and the part twisted by~$L$ (bottom). But those two quasi-isomorphisms of bounded complexes of $\CRG$-modules~$s$ and~$t$ only involve free~$\CR$-modules, hence tensoring them yields a quasi-isomorphism~$s\otimes t\colon C'\otimes C''\to L\otimes C'''$. Since both $C'$ and~$C''$ are already complexes of permutation modules, so is~$C'\otimes C''$. Note that at the bottom, $L\otimes C'''$ now admits a permutation module in degree~$m$, namely the module $L\otimes L\otimes C_m^-\cong C_m^-$. Let $C^{\textrm{new}}=\cone(s\otimes t)$. Then this complex is acyclic, consists of sign-permutation modules, and has permutation modules in degrees $\geq m$. It is clear that $C_0^{\textrm{new}}\cong L\otimes C_0$ is still~$\CR$ or~$L$ and it is easy to see that $C_1^{\textrm{new}}$ remains free by Frobenius (\Cref{Rem:tensor}).
Thus $C^{\textrm{new}}$ witnesses the truth of $(\Delta)_m$ and this completes the proof.
\end{proof}

We finish this section with some consequences of \Cref{Thm:1-resol} and its proof. Recall from \Cref{Def:A(G)} the subcategories $\AcatGR$ and~$\BcatGR$ of~$\Db(\CRG)$.

\begin{Prop}
\label{Prop:1-resol}%
The following are equivalent:
\begin{enumerate}[\rm(i)]
\item
\label{it:1-resol-i}%
The trivial $\CRG$-module $\CR$ belongs to~$\AcatGR$.
\item
\label{it:1-resol-ii}%
It admits a $0$-free permutation resolution (\Cref{Lem:beware}):
There is a resolution $0\to P_n\to \cdots \to P_1\to P_0\to \CR\to 0$ by permutation $\CRG$-modules with $P_0$~free.
\end{enumerate}
\end{Prop}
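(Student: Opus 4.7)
The plan is to address the two implications separately, with (ii)$\Rightarrow$(i) being the substantial one.

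For (i)$\Rightarrow$(ii), the hypothesis $\CR \in \AcatGR$ guarantees in particular a $0$-free permutation resolution of $\CR$ by \Cref{Def:A(G)}. Since $\CR$ is acyclic strictly below degree $0$, I would invoke \Cref{Lem:beware} with $m=n=0$ to trim this resolution to one concentrated in non-negative degrees, with $P_0$ still free and every $P_i$ still permutation. That is precisely the form required in~(ii).

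For (ii)$\Rightarrow$(i), my approach is to tensor the given resolution with itself enough times. Let $P = (0\to P_n \to \cdots \to P_1 \to P_0 \to 0)$ be the bounded complex built from the resolution of~(ii), and let $P \to \CR$ be the augmentation quasi-isomorphism. For each fixed $m \ge 0$ I would consider the $(m+1)$-fold tensor power
\[
P^{\otimes (m+1)} \,\too\, \CR^{\otimes (m+1)} = \CR
\]
and verify that it is an $m$-free permutation resolution of~$\CR$; since $m$ is arbitrary, this proves $\CR\in\AcatGR$. The map is a quasi-isomorphism by the K\"unneth formula, each $P_i$ being $\CR$-free as a permutation module. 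Each term of $P^{\otimes(m+1)}$ is a permutation module by \Cref{Rem:tensor}. The key point is $m$-freeness: for $k \le m$, every summand $P_{i_1}\otimes\cdots\otimes P_{i_{m+1}}$ of $(P^{\otimes(m+1)})_k$ satisfies $i_1 + \cdots + i_{m+1} = k < m+1$, so by pigeonhole at least one index $i_j$ must vanish; then the factor $P_0$ is free, the remaining factors are $\CR$-free, and Frobenius reciprocity (\Cref{Rem:tensor}) forces the whole tensor product to be a free $\CRG$-module.

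No real obstacle arises; the argument is elementary once one identifies the right construction. The only insight is the pigeonhole step: taking more than $m$ tensor factors of $P$ guarantees at least one copy of the free module $P_0$ in every summand of total degree~$\le m$, and Frobenius then upgrades this single free factor to freeness of the whole tensor product.
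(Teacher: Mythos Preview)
Your proof is correct and essentially identical to the paper's: both directions match, with (ii)$\Rightarrow$(i) proved by taking tensor powers of the given resolution and using pigeonhole plus Frobenius to ensure freeness in low degrees. The only cosmetic difference is indexing (the paper takes $m$ tensor factors to obtain an $(m-1)$-free resolution, you take $m+1$ factors for an $m$-free one).
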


\begin{proof}
\eqref{it:1-resol-i}$\To$\eqref{it:1-resol-ii} is \Cref{Lem:beware}.
For \eqref{it:1-resol-ii}$\To$\eqref{it:1-resol-i}, let $s:P\to \CR$ be a $0$-free permutation resolution and fix $m\geq 1$.
We claim that $s\potimes{m}:P\potimes{m}\to \CR\potimes{m}\cong\CR$ is an $(m-1)$-free permutation resolution.
Indeed, since the modules are all $\CR$-flat, $s\potimes{m}$ is a quasi-isomorphism.
Then $P\potimes{m}$ remains a complex of permutation modules.
(See \Cref{Rem:tensor}.)
And if $\ell\le m-1$ then every summand $P_{\ell_1}\otimes\cdots\otimes P_{\ell_m}$ of $(P\potimes{m})_\ell$ with~$\ell=\ell_1+\cdots+\ell_m$ is free since at least one of the $\ell_i$ must satisfy $\ell_i\le 0$.
\end{proof}

\begin{Rem}
\label{Rem:1-resol}%
A similar characterization of when $\CR$ belongs to~$\BcatGR$ holds with a $0$-projective \ppermutation resolution in~\eqref{it:1-resol-ii}, with the same proof.
\end{Rem}

\begin{Cor}
\label{Cor:R-Acat-p}%
Let $G$ be a $p$-group for some prime~$p$. Then $\CR\in\AcatGR$.
\end{Cor}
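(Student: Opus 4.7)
The plan is to deduce this immediately by combining the main result of the section, \Cref{Thm:1-resol}, with the characterization of \Cref{Prop:1-resol}. There is essentially no new work to do: all the difficulty has been packaged into \Cref{Thm:1-resol}, whose proof (particularly the $p=2$ case via sign-permutation modules and tensor-induction) is the real obstacle in this section.

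Concretely, I would proceed as follows. Apply \Cref{Thm:1-resol} to produce a bounded acyclic complex of permutation $\CRG$-modules
\[
\cdots 0 \to C_n \to C_{n-1} \to \cdots \to C_1 \to C_0 \to 0 \cdots
\]
with $C_0 = \CR$ (trivial action) and $C_1$ free. Since this complex is acyclic and vanishes in negative degrees, the map $C_1 \to C_0 = \CR$ is surjective and we may reinterpret the complex as a finite exact sequence
\[
0 \to C_n \to C_{n-1} \to \cdots \to C_1 \to \CR \to 0.
\]
Re-indexing by setting $P_i := C_{i+1}$, this is a resolution of the trivial module $\CR$ by permutation $\CRG$-modules in which $P_0 = C_1$ is free. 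This is exactly condition~(ii) of \Cref{Prop:1-resol}, namely a $0$-free permutation resolution of~$\CR$ in the sense of \Cref{Def:perm-resol} and \Cref{Lem:beware}.

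By the equivalence (ii)$\Rightarrow$(i) of \Cref{Prop:1-resol}, it follows that $\CR \in \AcatGR$, which concludes the proof. The only thing worth highlighting is the bookkeeping of the indexing convention: the complex of \Cref{Thm:1-resol} is written with $\CR$ sitting in degree~$0$ (not as an augmentation), so one must shift to read off a genuine resolution $P_\sbull \to \CR$, after which the freeness of $C_1$ translates to freeness of $P_0$, matching the $0$-free condition on the nose.
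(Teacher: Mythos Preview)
Your proof is correct and takes essentially the same approach as the paper: the paper's one-line proof simply observes that the criterion of \Cref{Prop:1-resol}\,(ii) is precisely what \Cref{Thm:1-resol} provides. Your version spells out the reindexing from the acyclic complex $C_\sbull$ to the resolution $P_\sbull \to \CR$, which is a reasonable elaboration but not a different argument.
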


\begin{proof}
The criterion of \Cref{Prop:1-resol}\,\eqref{it:1-resol-ii} is precisely \Cref{Thm:1-resol}.
\end{proof}

\begin{Cor}
\label{Cor:R-Acat}%
Let $G$ be an arbitrary finite group. Then we have $\CR\in\AcatGR^\natural$.
\end{Cor}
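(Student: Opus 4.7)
The plan is to bootstrap from the $p$-group case (\Cref{Cor:R-Acat-p}) to arbitrary finite groups via the Sylow reduction encoded in \Cref{Cor:Acat-Sylow}. Concretely, I would apply \Cref{Cor:Acat-Sylow} to $X=\CR$ viewed as the trivial $\CRG$-module in degree zero: the task reduces to verifying that $\Res^G_{H}(\CR)$ lies in $\Acat{H}{\CR}^\natural$ for every Sylow subgroup $H\le G$.

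For each prime $p$ dividing $|G|$, pick a $p$-Sylow subgroup $H\le G$. Since the $G$-action on $\CR$ is trivial, $\Res^G_H(\CR)$ is again the trivial $\CR H$-module $\CR$. Because $H$ is a $p$-group, \Cref{Cor:R-Acat-p} gives $\CR\in\Acat{H}{\CR}$, and in particular $\CR\in\Acat{H}{\CR}^\natural$. This verifies hypothesis~\eqref{it:AS.Sylow} of \Cref{Cor:Acat-Sylow} for every Sylow subgroup, so the conclusion $\CR\in\AcatGR^\natural$ follows.

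There is essentially no obstacle here: all the heavy lifting has been carried out in \Cref{Thm:1-resol} (the explicit $p$-group resolution, split by parity of~$p$) and in the Sylow decomposition argument based on~\eqref{eq:Ind-Res-cohomological} that underlies \Cref{Cor:Acat-Sylow}. The corollary is simply the combination of the two, using that restriction preserves triviality.
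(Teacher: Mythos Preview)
Your proof is correct and follows essentially the same approach as the paper: reduce to Sylow subgroups via \Cref{Cor:Acat-Sylow}, then invoke \Cref{Cor:R-Acat-p} for each $p$-Sylow. You have simply spelled out the details of what the paper compresses into a single sentence.
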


\begin{proof}
Reduce to Sylow subgroups by \Cref{Cor:Acat-Sylow} and then use \Cref{Cor:R-Acat-p}.
\end{proof}

\begin{Rem}
\label{Rem:R-Acat-abelian}%
If $G$ is abelian, then $\CR\in\AcatGR$ as well. See~\cite[Proposition~7]{balmer-benson:permutation-resolutions}.
\end{Rem}

To end this section, we note a generalization of \Cref{Lem:sign-perm-odd} when $\CR$ is a field (\cf \cite[Theorem~1]{dress:monomial}).
\begin{Lem}
\label{Lem:sign-perm-field}%
Let $\kk$ be a field of positive characteristic~$p$ and $G$ an arbitrary finite group.
Every sign-permutation $\kkG$-module~$M$ (\Cref{Def:sign-perm}) is $p$-permutation.
\end{Lem}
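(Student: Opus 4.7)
The plan is to reduce to \Cref{Lem:sign-perm-odd} via the characterization of $p$-permutation modules recalled in \Cref{Rec:permutation}: a $\kkG$-module is $p$-permutation if and only if its restriction to a $p$-Sylow subgroup is a genuine permutation module. Accordingly, I would fix a $p$-Sylow subgroup $H\le G$ and consider $\Res^G_H M$. The sign-permutation basis $A$ of $M$ satisfies $g\cdot A\subseteq A\cup(-A)$ for every $g\in G$, hence \emph{a fortiori} for every $g\in H$, so $A$ is also a sign-permutation basis of $\Res^G_H M$ over~$\kk H$. Thus the problem immediately reduces to showing that sign-permutation $\kk H$-modules are permutation, where $H$ is now a $p$-group.

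The case $p=2$ is essentially a non-issue: in characteristic~$2$ one has $-1=1$ in~$\kk$, so $A\cup(-A)=A$ and the sign-permutation basis is already a permutation basis. In particular $M$ is itself a permutation $\kkG$-module, without needing to restrict. This is exactly the `$1=-1$' escape clause that opens the proof of \Cref{Lem:sign-perm-odd}.

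The case $p$ odd is then handled by \Cref{Lem:sign-perm-odd} applied verbatim to the $p$-group~$H$ over the ring~$\CR=\kk$: it gives that $\Res^G_H M$ is a permutation $\kk H$-module. Combining with the trivial source characterization of \Cref{Rec:permutation}, we conclude that $M$ is $p$-permutation.

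There is no substantial obstacle: the two mildly distinct steps (the basis $A$ survives restriction, and the char-$2$ case trivialises because $-1=1$) are immediate, and the real work has already been done in \Cref{Lem:sign-perm-odd}. The only conceptual point worth highlighting is that passing to a $p$-Sylow is exactly what turns a statement about arbitrary~$G$ into one about a $p$-group, where \Cref{Lem:sign-perm-odd} applies.
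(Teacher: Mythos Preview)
Your proposal is correct and follows essentially the same route as the paper: handle $p=2$ by noting $-1=1$, and for $p$ odd restrict to a $p$-Sylow, apply \Cref{Lem:sign-perm-odd}, then invoke the trivial-source characterization of \Cref{Rec:permutation}. The paper's proof is just a terser version of what you wrote.
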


\begin{proof}
If~$p=2$ then $-1=1$ and $M$ is permutation. If $p$ is odd, $M$ restricts to a permutation module over a $p$-Sylow, by \Cref{Lem:sign-perm-odd}. Then use \Cref{Rec:permutation}.
\end{proof}

\begin{Cor}
\label{Cor:k-Bcat}%
Let $G$ be a finite group and assume that $\kk$ is a field of positive characteristic.
Then $\kk\in\Bcat{G}{\kk}$.
\end{Cor}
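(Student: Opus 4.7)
The plan is to verify the criterion of \Cref{Rem:1-resol} for the trivial module: it suffices to exhibit a finite exact sequence $0\to P_n\to\cdots\to P_1\to P_0\to\kk\to 0$ of $\kkG$-modules in which each $P_i$ is $p$-permutation and $P_0$ is projective.

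Such a resolution is essentially produced already by \Cref{Exa:tens-ind-H=1}. The key observation is that the construction there applies to an \emph{arbitrary} finite group~$G$---not only to $p$-groups---and yields a bounded acyclic complex $\Kos(G;\kk)=(0\to\kk\xto{1}\kk\to 0)\potimes{G/1}$ of sign-permutation $\kkG$-modules with $\Kos(G;\kk)_0=\kk$ and $\Kos(G;\kk)_1=\kkG$ the (free) regular representation. The first step is therefore to peel off the degree-zero term to obtain the exact sequence
\[
0\to\Kos(G;\kk)_n\to\cdots\to\Kos(G;\kk)_2\to\Kos(G;\kk)_1\to\kk\to 0,
\]
in which the final arrow is the augmentation $\kkG\to\kk$. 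This is visibly a resolution of~$\kk$ whose term in homological degree~$0$, namely $\kkG$, is free and hence projective.

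To finish, I would invoke \Cref{Lem:sign-perm-field}, which uses the positive-characteristic hypothesis on $\kk$ in an essential way to upgrade every sign-permutation $\kkG$-module to a $p$-permutation one. Applied to each term $\Kos(G;\kk)_i$, this shows that the displayed sequence is precisely a $0$-projective $p$-permutation resolution of $\kk$; \Cref{Rem:1-resol} then delivers $\kk\in\BcatGk$. No real obstacle arises in the present argument itself: the technical heart---the tensor-induction construction of \Cref{Exa:tens-ind-H=1} and the Sylow-subgroup reduction embedded in \Cref{Lem:sign-perm-field}---has already been carried out earlier in the section, so the corollary is just the assembly of these pieces.
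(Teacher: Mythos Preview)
Your proof is correct and matches the paper's own argument essentially verbatim: the paper likewise invokes the Koszul complex of \Cref{Exa:tens-ind-H=1}, applies \Cref{Lem:sign-perm-field} to see that its terms are $p$-permutation, and concludes via \Cref{Rem:1-resol}. The only cosmetic difference is that you spell out the peeling-off of the degree-zero term explicitly, whereas the paper compresses this into the single phrase ``$\Kos(G;\kk)$ defines a $0$-free $p$-permutation resolution of~$\kk$''.
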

\begin{proof}
Consider the Koszul complex $\Kos(G;\kk)$ of \Cref{Exa:tens-ind-H=1}.
We have seen in \Cref{Lem:sign-perm-field} that $\Kos(G;\kk)$ defines a $0$-free $p$-permutation resolution of $\kk$.
We conclude by \Cref{Rem:1-resol}.
\end{proof}


\section{Complexes of permutation modules in the derived category}
\label{sec:im(F)}


Recall the thick subcategory of~$\Db(\CRG)$ from \Cref{sec:A(G)} (\Cref{Rem:where-to}):
\begin{equation}
\label{eq:A(G)}%
\AcatGR^\natural=\BcatGR^\natural=\left\{ X\ \bigg|\ {X\oplus\Sigma X\textrm{ admits $m$-free permutation}\atop \textrm{resolutions (Def.\,\ref{Def:perm-resol}) for all }m\ge 0}\ \right\}.
\end{equation}
Our main goal in this section is to prove that the canonical functor~$\bar{\Upsilon}$ of~\eqref{eq:Kb->Db} in the Introduction is fully-faithful, with $\AcatGR^\natural$ as essential image (\Cref{Thm:Kperm->A(G)}).
Moreover, we will prove that~$\AcatGR^\natural=\Db(\CRG)$ for $\CR$ a field (\Cref{Cor:field-case}) and for $\CR$ regular (\Cref{Sch:regular}).
But first, let us verify that $\bar{\Upsilon}$ does at least land inside~$\AcatGR^\natural$.
This is a consequence of our work in the previous section.
\begin{Cor}
\label{Cor:Kbperm-Acat}%
Let $X$ be a bounded complex of permutation modules viewed as an object of~$\Db(\CRG)$.
Then $X\in\AcatGR^\natural$.
\end{Cor}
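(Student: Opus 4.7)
The plan is to bootstrap from the single fact that $\CR\in\AcatGR^\natural$, established in \Cref{Cor:R-Acat}, using only the formal closure properties of~$\AcatGR^\natural$ as a thick subcategory of~$\Db(\CRG)$ (\Cref{Prop:A(G)-B(G)}) together with the good behavior of $\Ind_H^G$ proved in \Cref{Prop:Ind-Res}.

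First I would reduce to the case of a single permutation module sitting in degree zero. Any permutation $\CRG$-module is a finite direct sum of modules of the form $\CR(G/H)\cong\Ind_H^G(\CR)$ for subgroups $H\le G$, and $\AcatGR^\natural$ is closed under finite direct sums. By \Cref{Cor:R-Acat} applied to~$H$ in place of~$G$, the trivial module $\CR$ belongs to~$\Acat{H}{\CR}^\natural$. Then \Cref{Prop:Ind-Res}\,\eqref{it:Ind} yields $\Ind_H^G(\CR)\in\AcatGR^\natural$. Hence every permutation module, viewed in degree zero, lies in~$\AcatGR^\natural$.

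Next I would handle a general bounded complex $X=(\cdots\to X_{n+1}\to X_n\to X_{n-1}\to \cdots)$ of permutation modules by induction on the length $\ell(X):=\#\{i\mid X_i\neq 0\}$. The base case $\ell(X)\le 1$ reduces to a (shift of a) single permutation module, handled above. For the inductive step, pick $n$ minimal with $X_n\neq 0$ and use the stupid truncation triangle in~$\Db(\CRG)$
\[
X_n[n]\too X\too \sigma^{>n}X\too \Sigma(X_n[n])
\]
where $\sigma^{>n}X$ is the brutal truncation obtained by setting $X_n$ to zero. Both $X_n[n]$ and~$\sigma^{>n}X$ lie in~$\AcatGR^\natural$, the former by the previous paragraph (and closure under shifts) and the latter by the induction hypothesis. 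Since $\AcatGR^\natural$ is triangulated, we conclude $X\in\AcatGR^\natural$.

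I do not anticipate any genuine obstacle: all the heavy lifting was done in \Cref{sec:1-resol} to put $\CR$ itself in~$\AcatGR^\natural$, and the present statement is essentially a formal consequence that propagates this from $\CR$ to all objects generated by the permutation modules under the triangulated operations. The only minor subtlety is making sure we argue in $\Db(\CRG)$ rather than~$\Kb(\perm(G;\CR))$, which is why the stupid truncation triangle above is interpreted in the derived category.
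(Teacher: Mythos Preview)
Your proposal is correct and follows essentially the same approach as the paper's proof: reduce to the generators $\Ind_H^G(\CR)$ using that $\AcatGR^\natural$ is triangulated, then invoke \Cref{Cor:R-Acat} for $\CR\in\Acat{H}{\CR}^\natural$ and \Cref{Prop:Ind-Res}\,\eqref{it:Ind} to induce. The paper compresses your stupid-truncation induction into the single phrase ``As $\AcatGR^\natural$ is triangulated, it suffices to show $\Ind_H^G(\CR)\in\AcatGR^\natural$'', but the content is identical.
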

\begin{proof}
As $\AcatGR^\natural$ is triangulated, it suffices to show $\Ind^G_H(\CR)\in\AcatGR^\natural$ for each subgroup~$H\le G$.
By stability under induction (\Cref{Prop:Ind-Res}), it suffices to show that $\CR\in\Acat{H}{\CR}^\natural$, which we did in \Cref{Cor:R-Acat}.
\end{proof}

\begin{Thm}
\label{Thm:Kperm->A(G)}%
Let $G$ be a finite group and~$\CR$ a commutative noetherian ring.
The canonical functor $\bar{\Upsilon}$ of~\eqref{eq:Kb->Db} restricts to an exact equivalence
\begin{equation}
\label{eq:Kb->A(G)}%
\bar{\Upsilon}\colon\left(\frac{\Kb(\perm(G;\CR))}{\Kbac(\perm(G;\CR))}\right)^\natural\xto{\ \simeq\ }\AcatGR^\natural.
\end{equation}
So $\bar{\Upsilon}$ of~\eqref{eq:Kb->Db} is fully faithful and its essential image in~$\Db(\CRG)$ is~$\AcatGR^\natural$.
\end{Thm}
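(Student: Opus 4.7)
The plan is to establish full faithfulness of $\bar{\Upsilon}$ at the level of the non-idempotent-completed quotient $\Kb(\perm(G;\CR))/\Kbac(\perm(G;\CR))$ and to identify its essential image directly as containing~$\AcatGR$; the desired equivalence of idempotent completions will then follow formally, given that \Cref{Cor:Kbperm-Acat} already confines the image to~$\AcatGR^\natural\subseteq\Db(\CRG)$. Essential surjectivity onto~$\AcatGR$ is immediate from \Cref{Def:A(G)}: any $X\in\AcatGR$ admits by definition an $m$-free permutation resolution $P\to X$, so $X\cong\bar{\Upsilon}(P)$ in~$\Db(\CRG)$ for some $P\in\Cb(\perm(G;\CR))$.

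For full faithfulness, the crux is that an arbitrary $P\in\Cb(\perm(G;\CR))$ only lies in~$\AcatGR^\natural$ and need not itself admit $m$-free permutation resolutions; however, $P\oplus\Sigma P$ always does (\Cref{Rec:dense}). The plan exploits this as follows. Given $P,Q\in\Cb(\perm(G;\CR))$ and a morphism $f\colon\bar{\Upsilon}(P)\to\bar{\Upsilon}(Q)$ in~$\Db(\CRG)$, represent~$f$ by a roof $P\xleftarrow{s}Z\xrightarrow{g}Q$ in~$\Cb(\CRG)$, extend along the split inclusion $\iota\colon P\hookrightarrow P\oplus\Sigma P$ to the roof $P\oplus\Sigma P\xleftarrow{s\oplus\id}Z\oplus\Sigma P\xrightarrow{(g,0)}Q$ representing $(f,0)$, and for $m$ large enough choose an $m$-free permutation resolution $\tau\colon\tilde P\to P\oplus\Sigma P$. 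Applying \Cref{Lem:lift} to lift $\tau$ through the quasi-isomorphism $s\oplus\id$ yields $\hat\tau\colon\tilde P\to Z\oplus\Sigma P$ with $(s\oplus\id)\hat\tau\sim\tau$. Then $(g,0)\hat\tau\colon\tilde P\to Q$ combined with~$\tau$, which is invertible in $\Kb(\perm)/\Kbac(\perm)$ because its cone is an acyclic complex of permutation modules, defines a morphism $P\oplus\Sigma P\to Q$ in $\Kb(\perm)/\Kbac(\perm)$ whose image in~$\Db(\CRG)$ is~$(f,0)$. Pre-composition with~$\iota$ then produces a morphism $P\to Q$ in $\Kb(\perm)/\Kbac(\perm)$ mapping to~$f$.

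Faithfulness proceeds in parallel: if $\bar{\Upsilon}(f)=0$ then $fs\sim 0$ in $\Kb(\CRG)$ for some quasi-isomorphism~$s$, and the same lifting procedure extended to $P\oplus\Sigma P$ yields $(f,0)\tau\sim 0$ in~$\Kb(\CRG)$. A crucial small observation is that since both~$\tilde P$ and~$Q$ are complexes of permutation modules, any null-homotopy between them in $\Kb(\CRG)$ automatically lives in $\Kb(\perm)$, because its components are $\CRG$-linear maps between permutation modules and $\perm(G;\CR)\subseteq\mmod{\CRG}$ is a full subcategory. The invertibility of~$\tau$ in $\Kb(\perm)/\Kbac(\perm)$ then forces $(f,0)=0$, whence $f=0$. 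The main obstacle throughout is precisely the necessity of passing through $P\oplus\Sigma P$: a permutation complex need not admit the good resolutions required by \Cref{Lem:lift}, only its sum with its shift does, and this is exactly what forces the idempotent completion to appear on the left-hand side of the stated equivalence.
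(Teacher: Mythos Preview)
Your proposal is correct and follows essentially the same route as the paper's proof: both reduce full faithfulness to the case where the source lies in~$\AcatGR$ by passing to $P\oplus\Sigma P$, and both then use \Cref{Lem:lift} to replace the middle term of a roof by a bounded complex of permutation modules. The only cosmetic difference is that the paper first invokes \Cref{Prop:perm-resol} to observe that the middle object~$Z$ of the roof already lies in~$\AcatGR$ and then resolves~$Z$ directly, whereas you resolve $P\oplus\Sigma P$ and lift that resolution through $s\oplus\id$; these are two packagings of the same lifting step. The paper also notes that faithfulness follows formally from fullness together with conservativity of~$\bar{\Upsilon}$, which would let you omit your separate faithfulness paragraph entirely.
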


\begin{proof}
By \Cref{Cor:Kbperm-Acat}, the canonical functor $\Kb(\perm(G;\CR))\to \Db(\CRG)$ lands inside $\AcatGR^\natural$. Since the latter is idempotent-complete there exists a well-defined exact functor~$\bar{\Upsilon}$ as in~\eqref{eq:Kb->A(G)}. By \Cref{Def:perm-resol} (for $m=0$), it is clear that $\bar{\Upsilon}$ is surjective-up-to-direct-summands, hence it suffices to prove that the functor
\[
\bar{\cat{K}}:=\frac{\Kb(\perm(G;\CR))}{\Kbac(\perm(G;\CR))}\xto{\ \bar{\Upsilon}\ }\Db(\CRG)
\]
is fully faithful. As every $X$ is a retract of $X\oplus \Sigma X$, it suffices to prove that the homomorphism $\bar{\Upsilon}\colon \Hom_{\bar{\cat{K}}}(X\oplus \Sigma X,Y)\to \Homcat{\Db(\CRG)}(X\oplus \Sigma X,Y)$ is an isomorphism for every~$X,Y\in\Kb(\perm(G;\CR))$.
Again by \Cref{Cor:Kbperm-Acat}, we know that such a complex $X$ belongs to~$\AcatGR^\natural$ hence $X\oplus\Sigma X\in\AcatGR$. So it suffices to show that for every $X,Y\in \Kb(\perm(G;\CR))$ such that $X\in \AcatGR$, the homomorphism
\[
\bar{\Upsilon}\colon \Hom_{\bar{\cat{K}}}(X,Y)\to \Homcat{\Db(\CRG)}(X,Y)
\]
is a bijection. For surjectivity, let $fs\inv\colon X\to Y$ be represented by a fraction $X\xfrom{s}Z\xto{f}Y$ in $\Cb(\CRG)$ where $s$ is a quasi-isomorphism. Since $X$ belongs to~$\AcatGR$, so does~$Z$ by \Cref{Prop:perm-resol}. So $Z$ admits a $0$-free permutation resolution, \ie there exists a quasi-isomorphism $t\colon P\to Z$ with $P\in \Cb(\perm(G;\CR))$. Hence our morphism $f\,s\inv=(ft)(st)\inv$ comes from $X\xfrom{st}P\xto{ft}Y$ in $\Hom_{\bar{\cat{K}}}(X,Y)$. Injectivity is similar (or follows from conservativity and fullness of~$\bar{\Upsilon}$).
\end{proof}

\begin{Cor}
\label{Cor:field-case}%
Let $\kk$ be a field. Then $\Acat{G}{\kk}^\natural=\Db(\kkG)$ and~\eqref{eq:Kb->Db} is an equivalence.
If moreover $G$ is a $p$-group where $p=\chara(\kk)$ then $\Acat{G}{\kk}=\Db(\kkG)$.
\end{Cor}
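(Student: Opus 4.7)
The strategy is to dispose of the `moreover' clause first, then bootstrap it via the Sylow reduction \Cref{Cor:Acat-Sylow} to handle arbitrary finite groups; the equivalence~\eqref{eq:Kb->Db} then follows formally from \Cref{Thm:Kperm->A(G)}.

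I would begin with the $p$-group case, where $p=\chara(\kk)$. By \Cref{Rem:p-gp}, $\Db(\kkG)$ is generated as a triangulated subcategory by the trivial module~$\kk$. On the other hand, $\kk\in\Acat{G}{\kk}$ by \Cref{Cor:R-Acat-p}, and $\Acat{G}{\kk}$ is triangulated by \Cref{Prop:A(G)-B(G)}. Together these force $\Acat{G}{\kk}=\Db(\kkG)$, \emph{without} any idempotent-completion, which is precisely the last sentence of the corollary.

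For the first assertion, with $\kk$ an arbitrary field and $G$ an arbitrary finite group, I would apply \Cref{Cor:Acat-Sylow}: since $\Res^G_H X$ automatically lies in $\Db(\kk H)$ for any $X\in\Db(\kkG)$, it suffices to show that $\Acat{H}{\kk}^\natural=\Db(\kk H)$ for every Sylow subgroup $H\le G$. Say $H$ is an $\ell$-Sylow. If $\ell\ne\chara(\kk)$, in particular if $\chara(\kk)=0$, then $|H|$ is invertible in~$\kk$ and \Cref{Cor:A(G)-R-perfect} identifies $\Acat{H}{\kk}^\natural$ with $\DRperf[\kk](\kk H)$; over a field every bounded complex of finite-dimensional vector spaces is perfect, so this is all of $\Db(\kk H)$. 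If instead $\ell=\chara(\kk)$, the case just settled applies to the $p$-group~$H$ and yields $\Acat{H}{\kk}=\Db(\kk H)$, a fortiori $\Acat{H}{\kk}^\natural=\Db(\kk H)$. In either case the Sylow criterion is met, yielding $\Acat{G}{\kk}^\natural=\Db(\kkG)$, and then \Cref{Thm:Kperm->A(G)} converts this equality into the stated equivalence.

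I do not foresee any genuine obstacle, as the heavy lifting has been done in \Cref{sec:1-resol} (providing $\kk\in\Acat{G}{\kk}$ for $p$-groups) and in the Mackey-style manipulations behind \Cref{Cor:Acat-Sylow}. The only subtlety is juggling the two complementary regimes—`$\ell$ invertible in $\kk$' versus `$\ell=\chara(\kk)$'—so that together they cover every Sylow of~$G$, and to remember that idempotent-completion is genuinely needed for the general statement but not in the $p$-group case.
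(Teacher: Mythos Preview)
Your proof is correct and follows essentially the same route as the paper: reduce to Sylow subgroups via \Cref{Cor:Acat-Sylow}, handle the case where the prime is invertible in~$\kk$ via \Cref{Cor:A(G)-R-perfect}, and use \Cref{Cor:R-Acat-p} together with \Cref{Rem:p-gp} for the $p$-group case with $p=\chara(\kk)$. The only cosmetic difference is that you treat the $p$-group case first and then bootstrap, whereas the paper reduces first and then specializes.
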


\begin{proof}
For the first statement, by \Cref{Cor:Acat-Sylow}, we can assume that $G$ is a $p$-group.
If $p$ is invertible in~$\kk$, we are done by \Cref{Cor:A(G)-R-perfect}.
So it suffices to prove the second statement.
If $\chara(\kk)=p$ and $G$ is a $p$-group then $\kk\in\Acat{G}{\kk}$ by \Cref{Cor:R-Acat-p} and $\kk$ generates $\Db(\kkG)$ as a triangulated category (see \Cref{Rem:p-gp}).
\end{proof}

We have all the ingredients to extend \Cref{Cor:field-case} to regular coefficients.

\begin{Sch}
\label{Sch:regular}%
Let us see that $\AcatGR^\natural=\Db(\CRG)$ when $\CR$ is regular. Together with \Cref{Thm:Kperm->A(G)} this implies \Cref{Thm:regular-intro} in the Introduction.

Pick $X\in\Db(\CR G)$ and let us show that $X$ belongs to~$\AcatGR^\natural$. As before, we reduce to the case of $G$ a $p$-group by \Cref{Cor:Acat-Sylow}, for some prime~$p$. Note that $X$ is automatically $\CR$-perfect since $\CR$ is regular and therefore, by \Cref{Cor:inverting-p}, there exists an exact triangle
\[
P \to X\oplus\Sigma X\to T \to \Sigma P
\]
in~$\Db(\CR G)$ where $P$ is a complex of permutation modules, and $T$ is $p$-torsion. In particular $P\in\Acat{G}{\CR}^\natural$ already, by \Cref{Cor:Kbperm-Acat}. So it suffices to prove $T\in\Acat{G}{\CR}^\natural$, \ie we can assume that $p^n\cdot X=0$ for some $n\gg1$. By \Cref{Rem:torsion} we then have $X\in\thick(\cone(X\xto{p}X))$. But $\cone(X\xto{p}X)\cong\cone(\CR\xto{p}\CR)\Lotimes_{\CR}X\cong i_*\rmL i^*(X)$ by the projection formula for the adjunction
\[
\xymatrix@R=1.3em{
\Db(\CR G) \ar@<-.3em>[d]_-{\rmL i^*=\bar{\CR}\Lotimes_{\CR}-}
\\
\Db(\bar{\CR} G) \ar@<-.3em>[u]_-{i_*}
}
\]
given by the usual extension and restriction of scalars along $\CR\to \bar{\CR}:=\CR/p$. Hence
\begin{equation}
\label{eq:aux-Z-1}%
X\in \thick(i_*(\Db(\bar{\CR} G))).
\end{equation}
We claim that the image of $\Db(\bar{\CR})$ under $\Infl_1^G\colon \Db(\bar{\CR})\to \Db(\bar{\CR}G)$ generates the whole of~$\Db(\bar{\CR}G)$ as a thick subcategory. This uses nilpotence of the augmentation ideal $I:=\Ker(\bar{\CR}G\to \bar{\CR})$, and the associated finite filtration  $\cdots I^{\ell+1} N\subseteq I^\ell N\cdots$  of any $\bar{\CR}G$-module~$N$, in which every $I^\ell N/I^{\ell+1} N$ has trivial $G$-action.
Then one uses regularity of~$\CR$ one more time and the commutativity of the following square
\begin{equation}
\label{eq:aux-Z-3}%
\vcenter{\xymatrix{
\Db(\bar\CR) \ar[r]^-{i_*} \ar[d]_-{\Infl_1^G}
& \Db(\CR) =\Dperf(\CR)=\thick(\CR) \kern-10em
\ar[d]^-{\Infl_1^G}
\\
\Db(\bar\CR G) \ar[r]^-{i_*}
& \Db(\CR G)
}}
\end{equation}
to continue from~\eqref{eq:aux-Z-1} and deduce
\[
X\in\thick(i_*(\Infl_1^G\Db(\bar\CR)))\underset{\textrm{(\ref{eq:aux-Z-3})}}{\,\subseteq\,}\thick(\Infl_1^G\CR)\subseteq\AcatGR^\natural
\]
where the last inclusion holds because $\CR\in\AcatGR^\natural$ by \Cref{Cor:R-Acat}.
\end{Sch}

\begin{Rem}
\label{Rem:Carlson}%
Using \Cref{Sch:regular} and Carlson~\cite{carlson:induction-abelem} one can easily show a form of Chouinard's theorem for~$\AcatGR^\natural$, namely if a complex $X\in \Db(\CRG)$ is such that $\Res^G_E(X)$ belongs to~$\Acat{E}{\CR}^\natural$ for all elementary abelian subgroups~$E\le G$, then $X\in\AcatGR^\natural$.
Indeed, by~\cite{carlson:induction-abelem}, the trivial module $\bbZ$ belongs to the thick subcategory of $\Db(\bbZ G)$ generated by modules induced from elementary abelian subgroups.
Hence $X\cong \bbZ\Lotimes_{\bbZ} X$ belongs to the thick subcategory of $\Db(\CRG)$ generated by complexes in $\Ind^G_E(\Db(\bbZ E)\Lotimes_{\bbZ}\Res^G_EX)$ (use
the projection formula for $\Ind_E^G\adj\Res^G_E$). We then conclude from $\Acat{E}{\bbZ}^\natural=\Db(\bbZ E)$ by \Cref{Sch:regular}.
\end{Rem}


\section{Density and Grothendieck group}
\label{sec:dense-and-K_0}%


We want to use Thomason's classification of dense subcategories to derive consequences from the results of previous sections. Let us remind the reader.

\begin{Rec}
\label{Rec:Thomason-dense}%
Given an essentially small triangulated category $\cT$ we may consider its Grothendieck group, $\rmK_0(\cT)$, the free abelian group generated by isomorphism classes of objects in $\cT$ quotiented by the relation $[X]+[Z]=[Y]$ for each exact triangle $X\to Y\to Z\to \Sigma X$ in~$\cT$. In particular $-[X]=[\Sigma X]$.

For each dense triangulated subcategory $\cA\subseteq \cT$ (\Cref{Rec:dense}) the map $\rmK_0(\cA)\to \rmK_0(\cT)$ is injective (\cite[Corollary~2.3]{thomason:classification}) hence $\rmK_0(\cA)$ defines a subgroup of~$\rmK_0(\cT)$. Conversely, each subgroup $S\subseteq \rmK_0(\cT)$ defines a dense subcategory
\[
\cA(S):=\SET{X\in \cT}{[X]\in S\textrm{ in }\rmK_0(\cT)}.
\]
By~\cite[Theorem~2.1]{thomason:classification} these constructions yield a well-defined bijection
\[
\{\text{ dense triangulated subcategories of }\cT\ \}\overset{\sim}{\,\longleftrightarrow\,}\{\text{ subgroups of }\rmK_0(\cT)\ \}.
\]
\end{Rec}

Recall the triangulated subcategories $\AcatGR$ and~$\BcatGR$ of \Cref{Def:A(G)} and recall that $\Img(\bar{\Upsilon})=\AcatGR^\natural=\BcatGR^\natural$ by \Cref{Thm:Kperm->A(G)} and \Cref{Prop:A-dense-in-B}.

\begin{Cor}
\label{Cor:Omega(M)}%
Let $M\in\mmod{\CRG}$ and $\Omega(M)=\Ker(\CRG\otimes M\onto M)$. Suppose that $M$ belongs to~$\Img(\bar{\Upsilon})$.
Then $M\oplus \Omega(M)$ belongs to~$\BcatGR$.
If furthermore $M$ is~$\CR$-free then $M\oplus\Omega(M)$ belongs to~$\AcatGR$.
\end{Cor}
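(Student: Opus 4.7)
The plan is to exhibit $M\oplus \Omega(M)$ as a vertex of a distinguished triangle in $\Db(\CRG)$ whose other two vertices visibly belong to $\BcatGR$, and to $\AcatGR$ in the $\CR$-free case. Begin with the defining short exact sequence
\[
0\to \Omega(M)\to \CRG\otimes_{\CR}M\to M\to 0
\]
(with diagonal $G$-action on the tensor), view it as an exact triangle in $\Db(\CRG)$, and take its term-wise direct sum with the zero triangle $M\to 0\to \Sigma M\to \Sigma M$. This produces an exact triangle
\[
M\oplus \Omega(M)\too \CRG\otimes M\too M\oplus \Sigma M\too \Sigma(M\oplus \Omega(M)).
\]
Since $\AcatGR$ and $\BcatGR$ are both triangulated by \Cref{Prop:A(G)-B(G)}, it suffices to place the two outer terms of this triangle in the relevant subcategory.

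The term $M\oplus \Sigma M$ is handled immediately: the hypothesis $M\in\Img(\bar{\Upsilon})=\AcatGR^\natural=\BcatGR^\natural$, together with the description of the thick closure in \Cref{Rec:dense}, gives $M\oplus \Sigma M\in \AcatGR\subseteq\BcatGR$. For the term $\CRG\otimes M$, use the standard re-indexing isomorphism $\CRG\otimes_{\CR}M\cong \Ind_1^G\Res_1^G M$ (via $g\otimes m\mapsto g\otimes g\inv m$, turning the diagonal action into action on the first factor only). In the general case, $M\in\AcatGR^\natural\subseteq \DRperf(\CRG)$ by \Cref{Cor:A(G)-R-perfect}, so $\Res_1^G M\in \Dperf(\CR)=\thick(\CR)$; applying the triangulated functor $\Ind_1^G$ (which sends $\CR$ to $\CRG$) places $\CRG\otimes M$ in $\thick(\CRG)=\Dperf(\CRG)$. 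The latter is contained in $\BcatGR$, because any bounded complex of finitely generated projective $\CRG$-modules serves as its own $m$-projective \ppermutation resolution under the identity (projectives are \ppermutation). If in addition $M$ is $\CR$-free, then $\Res_1^G M$ is $\CR$-free, so $\CRG\otimes M$ is a free $\CRG$-module concentrated in degree zero; the identity is then an $m$-free permutation resolution for every $m\ge 0$, putting $\CRG\otimes M$ directly in $\AcatGR$.

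Feeding these two ingredients into the triangle yields $M\oplus \Omega(M)\in \BcatGR$ in general, and $M\oplus \Omega(M)\in \AcatGR$ when $M$ is $\CR$-free. No genuine obstacle appears: the only substantive step is the re-indexing isomorphism that converts the tensor product into an induction from the trivial subgroup, after which $\CR$-perfectness of $\Res_1^G M$ (automatic from $M\in\AcatGR^\natural$) does all the work; everything else is formal triangulated bookkeeping.
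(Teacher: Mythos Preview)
Your proof is correct and reaches the same conclusion by essentially the same algebraic input---the Frobenius identification $\CRG\otimes M\cong\Ind_1^G\Res_1^G M$ and the $\CR$-perfectness of~$M$ coming from \Cref{Cor:A(G)-R-perfect}---but the packaging differs from the paper's. The paper argues via Thomason's classification (\Cref{Rec:Thomason-dense}): since $\BcatGR\subseteq\BcatGR^\natural$ is dense, membership of $M\oplus\Omega(M)$ in~$\BcatGR$ is decided by whether its class in~$\rmK_0(\BcatGR^\natural)$ lies in the subgroup~$\rmK_0(\BcatGR)$; the short exact sequence gives $[M\oplus\Omega(M)]=[\CRG\otimes M]$, and perfectness of $\CRG\otimes M$ places that class in $\rmK_0(\Dperf(\CRG))\subseteq\rmK_0(\BcatGR)$. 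You instead bypass $\rmK_0$ entirely by manufacturing an explicit triangle with vertices $\CRG\otimes M$ and $M\oplus\Sigma M$, both of which you locate directly in~$\BcatGR$ (resp.~$\AcatGR$), and then invoke triangulatedness (\Cref{Prop:A(G)-B(G)}). Your route is more self-contained in that it does not appeal to Thomason's theorem; the paper's route is the natural one given that the corollary sits in a section devoted to exploiting Thomason's classification, and it makes transparent that the obstruction to lying in~$\BcatGR$ is purely a $\rmK_0$-class.
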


\begin{proof}
We need to prove that in the group~$\rmK_0(\BcatGR^\natural)$ the class $[M\oplus\Omega(M)]=[\CRG\otimes M]$ belongs to the subgroup $\rmK_0(\BcatGR)$.
Since projective modules belong to~$\BcatGR$, we have $\Dperf(\CRG)\subseteq\BcatGR$. So it suffices to show that $\CRG\otimes M$ is perfect over~$\CRG$.
But $\Res^G_1(M)$ is perfect (\Cref{Cor:A(G)-R-perfect}) and by Frobenius $\CRG\otimes M\cong\Ind_1^G\Res^G_1 M\in\Dperf(\CRG)$. Hence the first claim.
Similarly, if $M$ is moreover $\CR$-free then $\CRG\otimes M$ is $\CRG$-free and $[M\oplus \Omega(M)]\in \rmK_0(\AcatGR)$.
\end{proof}

We record the following statement for later use in~\cite{balmer-gallauer:resol-big}:
\begin{Cor}
\label{Cor:M-resol-lim}%
Let $M$ be an $\CRG$-module that belongs to~$\Img(\bar{\Upsilon})$. Then there exists a sequence of quasi-isomorphisms of bounded complexes in~$\Chain_{\geq 0}(\CRG)$
\[
\cdots \to Q(n+1)\to Q(n)\to \cdots \to Q(1)\to M\oplus \Omega(M)
\]
such that $Q(n)$ consists of \ppermutation $\CRG$-modules, and in the range $0\le d<n$, the module $Q(n)_d$ is projective and $Q(n+1)_d\to Q(n)_d$ is the identity. In particular, the sequence $\cdots \to Q(n)\to \cdots \to Q(1)$ is eventually stationary in each degree and $P=\lim_{n\to \infty} Q(n)$, computed degreewise, is a projective resolution of~$M\oplus \Omega(M)$.
\end{Cor}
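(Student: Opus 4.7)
The plan is to deduce the corollary directly from \Cref{Prop:M-resol-lim} together with \Cref{Cor:Omega(M)}; the substance has already been done, and what remains is an application and a reindexing. Set $N := M \oplus \Omega(M)$. Since $M \in \Img(\bar{\Upsilon}) = \BcatGR^\natural$, \Cref{Cor:Omega(M)} ensures that $N$ lies in $\BcatGR$ itself, not merely in its idempotent-completion. This is the crucial hypothesis needed to invoke \Cref{Prop:M-resol-lim}, and it is also the reason the corollary resolves $M \oplus \Omega(M)$ rather than $M$ alone.

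Next I would pick any resolution $P \to N$ of $N$ by finitely generated projective $\CRG$-modules, and apply \Cref{Prop:M-resol-lim} to $N$ with this choice of $P$. The output is a compatible tower of bounded $\natural$-permutation complexes $Q(n) \in \Chain_{\geq 0}(\CRG)$ with $Q(0) = N$, connected by quasi-isomorphisms through which $P$ factors, and in which the map $P_d \to Q(n)_d$ is the identity whenever $d < n$. Reading the tower as $\cdots \to Q(n+1) \to Q(n) \to \cdots \to Q(1) \to M \oplus \Omega(M)$ gives the shape of the statement.

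It remains to verify the identity property of the transition maps and the limit claim. For $0 \leq d < n$, commutativity of the triangle $P_d \to Q(n+1)_d \to Q(n)_d$ together with the fact that both legs $P_d \to Q(n+1)_d$ and $P_d \to Q(n)_d$ are the identity forces $Q(n+1)_d \to Q(n)_d$ to be $\id_{P_d}$; in particular $Q(n)_d = P_d$ is projective in this range. The tower therefore stabilizes in each degree, and its degreewise inverse limit recovers $P$, which is a projective resolution of $M \oplus \Omega(M)$ by construction. There is no real obstacle: the only noteworthy point is the distinction between $\BcatGR$ and $\BcatGR^\natural$, which is precisely handled by \Cref{Cor:Omega(M)}.
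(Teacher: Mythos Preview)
Your proposal is correct and follows exactly the paper's approach: the paper's proof is the single sentence ``By \Cref{Cor:Omega(M)} we can apply \Cref{Prop:M-resol-lim} to $M\oplus \Omega(M)$,'' and you have simply unpacked this, spelling out why the transition maps are identities in low degrees and why the limit recovers~$P$.
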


\begin{proof}
By \Cref{Cor:Omega(M)} we can apply \Cref{Prop:M-resol-lim} to $M\oplus \Omega(M)$.
\end{proof}

We now turn our attention to the case of a field~$\kk$ of positive characteristic~$p$.

\begin{Rem}
\label{Rem:K_0-field-case}%
We want to apply Thomason's Theorem to the triangulated subcategories $\AcatGk$ and~$\BcatGk$ of $\cat{T}=\Db(\kkG)$ introduced in~\Cref{Def:A(G)}, that are dense by \Cref{Cor:field-case}. The Grothendieck group of~$\Db(\kkG)$ as a triangulated category coincides with the Grothendieck group of~$\mmod{\kkG}$ as an abelian category
\[
\rmK_0(\Db(\mmod{\kk G}))\cong \rmK_0(\mmod{\kk G})=\rmG_0(\kk G).
\]
This Grothendieck group is free abelian on the set of isomorphism classes of simple $\kk G$-modules.
\,(\footnote{\,In the classic reference~\cite{serre:linear-reps-finite-groups}, Serre writes $R_{\kk}(G)$ for $\KG{G}{\kk}$ and $P_{\kk}(G)$ for the Grothendieck group $\PGk$ of projective modules.
We do \emph{not} adopt this notation to avoid confusion with our coefficient ring~$\CR$ and the category~$\AcatGk$.})
We have the inclusions of subgroups in $\rmG_0({\kkG})$:
\[
\xymatrix@R=1.5em{
\bbZ\cdot[\kk G] \ar@{}[r]|{\subseteq} \ar@{}[d]|-{\rotatebox[origin=c]{270}{$\scriptstyle\subseteq$}}
& \PGk=\rmK_0(\proj(\kkG)) \ar@{}[d]|-{\rotatebox[origin=c]{270}{$\scriptstyle\subseteq$}}
\\
\AGk:=\rmK_0(\AcatGk)\ar@{}[r]|-{\subseteq}
& \BGk:=\rmK_0(\BcatGk)\ar@{}[r]|-{\subseteq}
& \rmG_0({\kkG}).
}
\]
Injectivity of $\AGk\to \RGk$ and $\BGk\to \RGk$ follows from density (\Cref{Cor:field-case}) and \Cref{Rec:Thomason-dense}. All inclusions displayed above are then straightforward, already for the corresponding categories.

The quotient $\RGk/\PGk$ is well-known to be a finite abelian $p$-group, whose exponent is a power of~$p$ dividing~$|G|$. See \cite[\S\,16, Theorem~35]{serre:linear-reps-finite-groups}\,(\footnote{\,The general assumptions of~\cite[p.\,115]{serre:linear-reps-finite-groups} hold for any~$\kk$ by~\cite[Theorem, p.~23]{hochster:complete-local-rings}.}).
Hence the same is true for $\RGk/\BGk$ but we shall prove more in \Cref{Cor:KQ=G_0}, namely that $\BGk=\RGk$.

The subgroup~$\AGk\subseteq\RGk$ is not of finite index in general, simply because permutation modules are defined integrally. The subgroup $\AGk$ is always contained in the image of~$\KG{G}{\bbF_{\!p}}$ inside~$\RGk$ that can have infinite index when $\kk G$ has simple modules not defined over~$\bbF_{\!p}$.
\end{Rem}

\begin{Exa}
The cokernel of the `Cartan homomorphism' $\KP{G}{\CR}\to\KG{G}{\CR}$ is not always of finite exponent when $\CR$ is not a field, even for a discrete valuation ring. Take $\CR=\bbZ_{(2)}$ and $G=C_2$ cyclic of order~2. Then $\KP{C_2}{\CR}=\bbZ\cdot[\CR C_2]$ since $\CR C_2$ is local. Let $\CR^+=\CR$ with trivial $C_2$-action. Rationally, in $\KG{C_2}{\bbQ}\cong \bbZ\cdot[\bbQ^+]\oplus \bbZ\cdot[\bbQ^-]$ for $\bbQ^+$ trivial and~$\bbQ^-=\bbQ$ with sign action, our $[\CR^+]$ maps to~$[\bbQ^+]$ but $[\CR C_2]$ maps to~$[\bbQ^+]+[\bbQ^-]$.
So no non-zero multiple of $[\CR^+]\in\KG{C_2}{\CR}$ belongs to~$\KP{C_2}{\CR}$.
\end{Exa}

Applying Thomason's classification (\Cref{Rec:Thomason-dense}) to the situation of \Cref{Rem:K_0-field-case} we get for instance:

\begin{Cor}
\label{Cor:resolution-K-class}%
An $M\in\mmod{\kk G}$ admits $m$-free permutation resolutions for all $m\ge 0$ if and only if its class $[M]\in \RGk$ belongs to the subgroup $\AGk$.
\qed
\end{Cor}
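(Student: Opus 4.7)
The plan is simply to invoke Thomason's classification of dense triangulated subcategories (\Cref{Rec:Thomason-dense}) applied to the pair $\AcatGk\subseteq\Db(\kkG)$. The ingredients are all in place; the work is just to check that everything fits.

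First I would note that $\AcatGk$ is a dense triangulated subcategory of~$\Db(\kkG)$: it is triangulated by \Cref{Prop:A(G)-B(G)}, and density~$\AcatGk^\natural=\Db(\kkG)$ is exactly the content of \Cref{Cor:field-case}. Then by Thomason's bijection (\Cref{Rec:Thomason-dense}), the subgroup~$\AGk=\rmK_0(\AcatGk)\subseteq\RGk$ recovers $\AcatGk$ as
\[
\AcatGk=\SET{X\in\Db(\kkG)}{[X]\in\AGk\textrm{ in }\RGk}.
\]

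Specializing to an~$M\in\mmod{\kkG}$ viewed as a complex concentrated in degree zero, membership $M\in\AcatGk$ unwinds via \Cref{Def:A(G)} to exactly the statement that $M$ admits $m$-free permutation resolutions for all $m\ge 0$. This gives the equivalence claimed in the corollary.

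There is essentially no obstacle: the only point one might worry about is whether applying \Cref{Def:A(G)} to a module (concentrated in degree zero) matches the notion of $m$-free permutation resolution of \Cref{Def:perm-resol} in the sense intended by the statement, but this is immediate from the definitions (and the independence of the choice of representative is guaranteed by \Cref{Prop:perm-resol}).
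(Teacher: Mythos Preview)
Your proposal is correct and is exactly the paper's intended argument: the corollary is stated immediately after the sentence ``Applying Thomason's classification (\Cref{Rec:Thomason-dense}) to the situation of \Cref{Rem:K_0-field-case} we get for instance'' and is given no further proof beyond \qed. You have simply spelled out the details of that one-line application.
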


We do not have a description of~$\AGk$ in general but it is already remarkable to have a condition in terms of the class of~$M$ in the Grothendieck group. Using only that free modules belong to $\AcatGk$ we get some interesting consequences.

\begin{Cor}
\label{Cor:complement1}%
Let $M\in\mmod{\kk G}$ and consider $\Omega(M)=\Ker(\kkG\otimes M\onto M)$. Then $M\oplus\Omega(M)$ admits $m$-free permutation resolutions for all~$m\ge 0$. In particular, $M\oplus\Omega(M)$ admits a finite resolution by finitely generated permutation modules.
\end{Cor}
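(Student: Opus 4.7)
The plan is to invoke the characterization of $\AcatGk$ via Grothendieck groups (\Cref{Cor:resolution-K-class}) and reduce the problem to a simple class computation in~$\RGk$.

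First I would observe that, by the very definition of~$\Omega(M)$, there is a short exact sequence $0 \to \Omega(M) \to \kkG \otimes M \to M \to 0$ in $\mmod{\kkG}$, and hence the identity
\[
[M \oplus \Omega(M)] = [M] + [\Omega(M)] = [\kkG \otimes M]
\]
holds in $\RGk$. Next, since $\Res^G_1 M$ is automatically $\kk$-free (being a finite-dimensional vector space), Frobenius reciprocity (\Cref{Rem:tensor}) gives that $\kkG \otimes M \cong \Ind_1^G \Res_1^G M$ is a free $\kkG$-module. In particular $[\kkG \otimes M]$ is an integer multiple of $[\kkG]$ in $\RGk$. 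Since $\kkG$ belongs to $\AcatGk$ (as free modules trivially admit $m$-free permutation resolutions for all~$m$), we have $[\kkG] \in \AGk$, and therefore $[M \oplus \Omega(M)] \in \AGk$.

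At this point, \Cref{Cor:resolution-K-class} immediately yields that $M \oplus \Omega(M)$ admits $m$-free permutation resolutions for all $m \geq 0$. For the final sentence, specializing to $m = 0$ and applying \Cref{Cor:0-resolution} produces a finite resolution of $M \oplus \Omega(M)$ by finitely generated permutation modules.

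There is no real obstacle here: the content has already been done in the preceding sections. The only point that might deserve a word of care is verifying that $\kkG \otimes M$ genuinely represents a class in the subgroup $\AGk$ of $\RGk$, but this is immediate from the observation that free modules lie in $\AcatGk$ and from the additivity of~$[-]$ in short exact sequences.
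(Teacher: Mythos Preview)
Your proposal is correct and follows essentially the same approach as the paper: both arguments compute $[M\oplus\Omega(M)]=[\kkG\otimes M]$ in~$\RGk$, observe this class lies in~$\AGk$ because $\kkG\otimes M$ is free, and then invoke Thomason's characterization together with \Cref{Cor:0-resolution}. The paper simply packages the first two steps by citing \Cref{Cor:Omega(M)} (together with the fact that every $\kkG$-module is $\kk$-free), whereas you unpack them and cite \Cref{Cor:resolution-K-class} directly.
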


\begin{proof}
The first part follows from \Cref{Cor:Omega(M)}, since every~$M$ is~$\kk$-free. The second part follows from \Cref{Cor:0-resolution}.
\end{proof}

\begin{Rem}
\label{Rem:Kb-Db-dense}%
The essential images of the obvious functors $\Kb(\perm(G;\kk))\to \Db(\kk G)$ and $\Kb(\perm(G;\kk)^\natural)\to \Db(\kk G)$ are also dense triangulated subcategories.
Indeed, these essential images are the same as those of the functors
\[
\frac{\Kb(\perm(G;\kk))}{\Kbac(\perm(G;\kk))}\to \Db(\kk G)
\qquadtext{and}
\frac{\Kb(\perm(G;\kk)^\natural)}{\Kbac(\perm(G;\kk)^\natural)}\to \Db(\kk G).
\]
As these functors are full (and faithful) by \Cref{Thm:Kperm->A(G)}, their images are triangulated subcategories. As these images contain $\AcatGk$, they are dense by \Cref{Cor:field-case}. In fact, the right-hand functor is already essentially surjective, as we shall see in \Cref{Thm:kG-pperm-resolutions}. By Thomason, it suffices to understand what happens on~$\rmK_0$.
\end{Rem}

\begin{Prop}[Boltje/Bouc]
\label{Prop:Bouc-Boltje}%
The canonical homomorphism
\[
\rmK_0(\perm(G;\kk)^\natural)\to \RGk
\]
from the additive Grothendieck group of $p$-permutation modules {\rm(}\aka the $p$-permu\-ta\-tion ring~\cite{bouc-thevenaz:p-perm-ring}, or trivial source ring~\cite{boltje:trivial-source}{\rm)} is a surjection onto~$\RGk$. \textrm{\rm(\footnote{\,This homomorphism is rarely injective: Even for $G=C_p$ we get $\bbZ[x]/(x^2-px)\to\bbZ$.})}
\end{Prop}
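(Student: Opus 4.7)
The plan is to reduce, via a modular form of Brauer's induction theorem, to the case of $p$-elementary subgroups, and then to verify the claim directly for those by a computation with semisimple group algebras.

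In more detail, modular Brauer induction asserts that $\RGk$ is generated as an abelian group by classes of the form $\Ind_H^G[V]$, where $H$ ranges over the \emph{$p$-elementary} subgroups of $G$ (those of the form $H = C \times P$ with $C$ cyclic of order prime to~$p$ and $P$ a $p$-group) and $V$ over the simple $\kk H$-modules. Since induction is exact and preserves both permutation modules and direct summands, it restricts to an additive functor $\perm(H;\kk)^\natural \to \perm(G;\kk)^\natural$. Hence it suffices to prove the proposition when $G$ itself is $p$-elementary.

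So suppose $G = C \times P$ and let $V$ be a simple $\kk G$-module. Since the normal subgroup $P$ is a $p$-group, its augmentation ideal lies in $J(\kk G)$, so $P$ acts trivially on~$V$; thus $V \cong \Infl_C^G U$ for some simple $\kk C$-module~$U$. Because $|C|$ is coprime to~$p$, the algebra $\kk C$ is semisimple, so $U$ is a direct summand of the regular representation $\kk C$, which is a permutation $\kk C$-module. Inflation preserves permutation modules (the inflated basis is permuted by $G$ through $G\onto C$), so $V = \Infl_C^G U$ is a $p$-permutation $\kk G$-module, placing $[V]$ in the image of $\rmK_0(\perm(G;\kk)^\natural)\to\RGk$. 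This completes the plan.

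The main subtlety---and presumably why the authors credit Boltje and Bouc with removing the ``sufficiently large'' assumption on~$\kk$---lies in the reduction step: the classical Brauer induction theorem is most naturally stated with one-dimensional characters over an algebraically closed field, whereas we need genuine simple $\kk H$-modules over a possibly non-splitting~$\kk$. Descending to~$\kk$ requires bundling Galois-conjugate one-dimensional characters into single $\kk$-simples and assembling the appropriate integral relations. The verification in the third paragraph is however insensitive to any splitting issue, since semisimplicity of~$\kk C$ exhibits each simple as a summand of the regular representation regardless of how Galois acts on absolutely simple constituents.
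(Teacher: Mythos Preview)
Your reduction step contains the gap, and it is precisely the gap that Boltje and Bouc were credited with closing. Modular Brauer induction over an arbitrary field~$\kk$ (Serre, \S17.2) reduces not to $p$-elementary subgroups but to the $\Gamma_K$-elementary ones of \S12.4, which for general~$\kk$ are \emph{semi-direct} products $C\rtimes Q$; they collapse to direct products $C\times Q$ only when $\kk$ is sufficiently large. (Even then one gets $q$-elementary subgroups for all primes~$q$, not just $q=p$, though the case $q\neq p$ is easy since the $p$-Sylow then sits inside the normal cyclic factor.) Your verification uses crucially that $P\normal G=C\times P$, so that $P$ acts trivially on every simple; this fails for a genuine semi-direct product $C\rtimes P$ with $P$ acting nontrivially on~$C$.

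The paper's proof accordingly splits into two cases after the Brauer reduction. The ``easy case'' (normal $p$-Sylow) is essentially your argument. The ``tricky case'' (Bouc's contribution) is $G=C\rtimes P$ with $|C|=m$ prime to~$p$: one reduces further, by induction on~$|G|$ following Serre \S17.3, to simples of the form $\kk'=\kk[X]/f(X)$ with $f\mid X^m-1$ and $P$ acting through $\kk$-automorphisms of the field~$\kk'$. Since $\kk'/\kk$ is separable (as $p\nmid m$) hence Galois, the normal basis theorem exhibits $\kk'$ as a permutation $\kk P$-module, so the simple is $p$-permutation. Boltje's alternative route is closer in spirit to your final paragraph: a canonical section combined with Galois descent to the sufficiently-large case. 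So you have correctly sensed \emph{that} the field-size hypothesis matters, but misdiagnosed \emph{where}: the difficulty is not in descending the induction theorem itself, but in the fact that over small~$\kk$ the theorem hands you a strictly larger class of subgroups for which your direct-product verification no longer applies.
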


\begin{proof}
Brauer's Theorem in the modular case~\cite[\S\,17.2]{serre:linear-reps-finite-groups} asserts that
\[
\Ind:\oplus_H \KG{H}{\kk}\onto \RGk
\]
is surjective, where $H$ runs through the so-called $\Gamma_K$-elementary subgroups of~$G$ (with notation of \cite[\S\,12.4]{serre:linear-reps-finite-groups}). So it suffices to prove the result for~$G$ of that type. In that case, we prove that every \emph{simple} $\kkG$-module~$M$ is $p$-permutation.

In the `easy case' where $G=C\rtimes Q$ with $C$ (cyclic) of order a power of~$p$ and $Q$ of order prime to~$p$, we can consider the non-zero submodule $M^C$ of~$M$. As $C$ is normal in~$G$, it follows that $M^C$ is a $\kkG$-submodule of~$M$, hence equal to it. In short, $M$ has trivial restriction to the $p$-Sylow~$C$ of~$G$, hence is $p$-permutation.

The `tricky case' is when $G=C\rtimes P$ where $P$ is a $p$-Sylow and $C$ is cyclic of order~$m$ prime to~$p$. Using induction on~$|G|$ as in the proof of~\cite[\S\,17.3, Theorem~41]{serre:linear-reps-finite-groups}, we reduce to the case where $M$ is a finite extension $\kk'=\kk[X]/f(X)$ where $f$ is an irreducible factor of $X^m-1$, on which $P$ acts through $k$-automorphisms of the field $\kk'$. As $m$ and $p$ are coprime, the cyclotomic extension $\kk'/\kk$ is separable and hence Galois. It follows from the normal basis theorem that the $\kk[P]$-module $\kk'$ is permutation, and we conclude as before (see \Cref{Rec:permutation}).
\end{proof}

\begin{Rem}
\label{Rem:Bouc}%
If we assume the field $\kk$ `sufficiently large' (\cf \cite[p.\,115]{serre:linear-reps-finite-groups}), \eg\ algebraically closed, the above $\Gamma_K$-elementary subgroups are $q$-elemen\-tary for a prime~$q$, \ie of the form $C\times Q$ for a $q$-group~$Q$ and a cyclic group~$C$ of order prime to~$q$. In that case, the $p$-Sylow of~$G$ is normal and we can apply the `easy case' of the above proof. (In an early version of this paper, we assumed $\kk$ sufficiently large for that reason.) The `tricky case' was communicated to us by Serge Bouc.
\end{Rem}

\begin{Rem}
\label{Rem:Boltje}%
Robert Boltje gave us a different argument to remove `$\kk$ sufficiently large' in \Cref{Prop:Bouc-Boltje}, based on a natural section of the homomorphism~$\rmK_0(\perm(G;\kk)^\natural)\to \RGk$. This uses the canonical induction formula for the Brauer character ring, as well as Galois descent to reduce to the case of~$\kk$ sufficiently large discussed above.
\end{Rem}

\begin{Cor}
\label{Cor:KQ=G_0}%
We have $\BGk=\RGk$ hence $\BcatGk=\Db(\kkG)$.
\end{Cor}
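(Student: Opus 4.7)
The plan is to combine density arguments with Thomason's classification of dense triangulated subcategories (\Cref{Rec:Thomason-dense}) and the Bouc--Boltje surjectivity of \Cref{Prop:Bouc-Boltje}. First I would note that $\BcatGk$ contains $\AcatGk$, which is already dense in $\Db(\kkG)$ since $\AcatGk^{\natural}=\Db(\kkG)$ by \Cref{Cor:field-case}; hence $\BcatGk$ itself is dense. By Thomason, therefore, the identity $\BcatGk=\Db(\kkG)$ is equivalent to $\BGk=\RGk$; moreover, by the same classification, an object $M\in\Db(\kkG)$ lies in $\BcatGk$ precisely when $[M]\in\BGk$. Since the classes of $p$-permutation modules generate $\RGk$ by \Cref{Prop:Bouc-Boltje}, the whole corollary reduces to the single point: every $p$-permutation $\kkG$-module $M$ belongs to $\BcatGk$ (not merely to $\BcatGk^{\natural}$).

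To establish this, I would use the tensor-product trick already employed in the proof of \Cref{Prop:1-resol} and its $\Bname$-analog \Cref{Rem:1-resol}. By \Cref{Cor:k-Bcat}, $\kk\in\BcatGk$, so for every $m\ge 0$ there exists an $m$-projective $p$-permutation resolution $s\colon Q\to\kk$. Consider $s\otimes_{\kk}\id_{M}\colon Q\otimes_{\kk}M\to M$, with $G$ acting diagonally on tensor products. Since $M$ is $\kk$-flat, this is a quasi-isomorphism by K\"unneth. Every term $Q_i\otimes_{\kk}M$ is itself $p$-permutation: indeed, $p$-permutation modules are stable under $\Res$ and $\Ind$ (\Cref{Rec:permutation}), and the projection formula $\Ind_H^G(\kk)\otimes_{\kk}N\cong \Ind_H^G\Res_H^G(N)$, extended by direct sums and summands, implies that the tensor product of two $p$-permutation modules is $p$-permutation. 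Finally, in degrees $i\le m$ the module $Q_i$ is projective, hence a summand of some free $\kkG$-module $F$; and $F\otimes_{\kk}M$ is free by \Cref{Rem:tensor} (Frobenius reciprocity, using that $M$ is $\kk$-free), so $Q_i\otimes_{\kk}M$ is projective.

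Thus $Q\otimes_{\kk}M\to M$ is an $m$-projective $p$-permutation resolution for all $m\ge 0$, proving $M\in\BcatGk$. It then follows that $\BGk$ contains the class of every $p$-permutation module, and \Cref{Prop:Bouc-Boltje} yields $\BGk=\RGk$; the equality $\BcatGk=\Db(\kkG)$ then follows from Thomason. The main conceptual obstacle is the passage from the already-known idempotent-completed identity $\BcatGk^{\natural}=\Db(\kkG)$ to the non-completed one, since idempotent completion could in principle enlarge a dense subcategory; ruling that out is exactly the content of $\BGk=\RGk$, and Bouc--Boltje's supply of genuine $p$-permutation representatives for every Grothendieck class is indispensable for this. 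The technical bookkeeping with diagonal versus one-sided $G$-actions in the tensor and induction formulas is routine by comparison.
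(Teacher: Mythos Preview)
Your proof is correct and follows essentially the same route as the paper. The paper packages the tensor argument slightly differently---it observes that $\BGk$ is an \emph{ideal} of $\RGk$ (closure under multiplication by $p$-permutation classes being the ``straightforward'' step you spell out explicitly) and then uses $[\kk]\in\BGk$ from \Cref{Cor:k-Bcat} to conclude the ideal is all of $\RGk$---whereas you specialize directly to $X=\kk$ and show each $p$-permutation $M$ lies in $\BcatGk$; but the underlying mechanism (tensor an $m$-projective \ppermutation resolution with a $p$-permutation module, use Frobenius for projectivity in low degrees, invoke \Cref{Prop:Bouc-Boltje} and Thomason) is identical.
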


\begin{proof}
First, we claim that the subgroup $\BGk\subseteq\RGk$ is an ideal.
Indeed, by \Cref{Prop:Bouc-Boltje}, it suffices to show that $\BGk$ is closed under multiplying by the class of a $p$-permutation module, which is straightforward.
We are therefore reduced to show that $1=[\kk]$ belongs to this ideal~$\BGk$.
This is true by \Cref{Cor:k-Bcat}.
The second statement follows by Thomason (\Cref{Rec:Thomason-dense}).
\end{proof}

Summarizing the situation, we have our main result:
\begin{Thm}
\label{Thm:kG-pperm-resolutions}%
Let $G$ be a finite group, and $\kk$ a field of characteristic $p>0$.
Then every $\kkG$-module admits a finite $p$-permutation resolution.
Moreover, the canonical functor (see \Cref{Rec:permutation})
\[
\frac{\Kb(\perm(G;\kk)^\natural)}{\Kbac(\perm(G;\kk)^\natural)}\isoto \Db(\kk G)
\]
is an equivalence. So the left-hand quotient is already idempotent-complete.
\end{Thm}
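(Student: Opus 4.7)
My plan is to deduce both assertions from results already established in the excerpt, the decisive inputs being \Cref{Cor:KQ=G_0} and \Cref{Prop:Bouc-Boltje}.

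First, the existence of finite $p$-permutation resolutions will fall out immediately from \Cref{Cor:KQ=G_0}, which gives $\BcatGk=\Db(\kkG)$. Hence any $\kkG$-module~$M$, viewed as a complex in degree zero, lies in~$\BcatGk$, so by the very definition admits $m$-projective $p$-permutation resolutions for every $m\geq 0$. Specializing to $m=0$ and invoking the $p$-permutation variant of \Cref{Cor:0-resolution} flagged in \Cref{Rem:variation} will then produce an honest finite resolution of~$M$ by $p$-permutation $\kkG$-modules.

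For the equivalence, the chain of inclusions in \Cref{Rem:idempotent-completion} places the Verdier quotient $\Kb(\perm(G;\kk)^\natural)/\Kbac(\perm(G;\kk)^\natural)$ between $\Kb(\perm(G;\kk))/\Kbac(\perm(G;\kk))$ and its idempotent completion. By \Cref{Thm:Kperm->A(G)} and \Cref{Cor:field-case}, this idempotent completion is carried by~$\bar{\Upsilon}$ equivalently onto~$\Db(\kkG)$. Restricting $\bar{\Upsilon}$ to the middle category therefore yields a fully faithful functor into~$\Db(\kkG)$; call its essential image~$\cA$. It is a triangulated subcategory, and its idempotent completion in~$\Db(\kkG)$ agrees with that of the left-most category, which is all of~$\Db(\kkG)$. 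Thus $\cA$ is dense in~$\Db(\kkG)$.

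The final step is to upgrade density to equality via Thomason's classification (\Cref{Rec:Thomason-dense}): since $\cA$ is dense, the equality $\cA=\Db(\kkG)$ is equivalent to $\rmK_0(\cA)=\RGk$. But $\rmK_0(\cA)$ visibly contains the class~$[P]$ of every $p$-permutation $\kkG$-module~$P$, and by \Cref{Prop:Bouc-Boltje} such classes already generate the whole of~$\RGk$. This forces $\cA=\Db(\kkG)$ and so gives the asserted equivalence; the final sentence then follows because $\Db(\kkG)$ is itself idempotent-complete. The only step with genuine content is the surjectivity in \Cref{Prop:Bouc-Boltje}, which rests on Brauer's modular induction theorem (the delicate input being the \emph{tricky case} $C\rtimes P$ treated via the normal basis theorem); the rest of the argument is a formal application of Thomason's theorem to the scaffolding built in Sections~\ref{sec:A(G)} and~\ref{sec:im(F)}.
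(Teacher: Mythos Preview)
Your proof is correct and follows the paper's argument closely. The first assertion is handled identically (via \Cref{Cor:KQ=G_0} and \Cref{Cor:0-resolution}), and for the equivalence you likewise identify the essential image~$\cA$ as a dense triangulated subcategory of~$\Db(\kkG)$ via \Cref{Thm:Kperm->A(G)} and \Cref{Cor:field-case}.

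The only genuine difference is in the final step. The paper concludes by observing the containment $\cA\supseteq\BcatGk$ directly (every object of~$\BcatGk$ is by definition quasi-isomorphic to a bounded complex of \ppermutation\ modules) and then invoking \Cref{Cor:KQ=G_0} once more to get $\BcatGk=\Db(\kkG)$. You instead apply Thomason's theorem directly to~$\cA$, using \Cref{Prop:Bouc-Boltje} to see that $\rmK_0(\cA)=\RGk$. Both routes rest on the same substantive input (\Cref{Prop:Bouc-Boltje}, which feeds into \Cref{Cor:KQ=G_0} as well); yours is marginally more self-contained for the equivalence since it does not pass through~$\BcatGk$ a second time, while the paper's is slightly shorter since the Thomason step is already packaged in \Cref{Cor:KQ=G_0}.
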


\begin{proof}
Every module $M\in\mmod{\kkG}$ belongs to~$\BcatGk$ by \Cref{Cor:KQ=G_0}. It follows that $M$ admits a $p$-permutation resolution by \Cref{Cor:0-resolution}.
For the equivalence, we resume the discussion of \Cref{Rem:Kb-Db-dense}. By \Cref{Thm:Kperm->A(G)} and \Cref{Cor:field-case}, the quotient ${\Kb(\perm(G;\kk)^\natural)}/{\Kbac(\perm(G;\kk)^\natural)}$ is a dense subcategory of~$\Db(\kkG)$.
As every object in $\BcatGk$ is quasi-isomorphic to a complex of \ppermutation modules, this dense subcategory contains $\BcatGk$.
By \Cref{Cor:KQ=G_0}, it therefore coincides with $\Db(\kkG)$.
\end{proof}


\end{document}